\def\fdeg{r}
\def\llangle{\langle \! \langle}
\def\rrangle{\rangle \! \rangle}
\def\S2{{S^2}}
\def\Psiec{\Psi \mathrm{ec}(S^2)}
\def\dd{\mathrm{d}}
\newcommand{\M}{\mathcal{M}}
\newcommand{\R}{\mathbb{R}}
\DeclareMathOperator*{\spann}{span}
\crefname{hypothesis}{Hypothesis}{Hypotheses}
\title{A Local Spectral Exterior Calculus for the Sphere and Application to the Rotating Shallow Water Equations}
\author{Clauson Carvalho da Silva\thanks{Otto-von-Guericke-Universit{\"a}t Magdeburg, Magdeburg, Germany (C. Lessig: \email{lessig@isg.cs.uni-magdeburg.de}, \url{http://graphics.cs.uni-magdeburg.de/}).}
\and Christian Lessig\footnotemark[1]
\and Boyko Dodov\thanks{AIR Worldwide}
\and Hendrik Dijkstra\thanks{Institute for Marine and Atmospheric Research, Utrecht University}
\and Themis Sapsis\thanks{Sandlab, Massachusetts Institute of Technology}}
\begin{document}

\maketitle

\begin{abstract}
  We introduce $\Psiec$, a local spectral exterior calculus for the two-sphere $\S2$.
  $\Psiec$ provides a discretization of Cartan's exterior calculus on $\S2$ formed by spherical differential $r$-form wavelets $\smash{\psi_{jk}^{r,\nu}}$.
  These are well localized in space and frequency and provide (Stevenson) frames for the homogeneous Sobolev spaces $\smash{\dot{H}^{-r+1}( \Omega_{\nu}^{r} , \S2 )}$ of differential $r$-forms.
  At the same time, they satisfy important properties of the exterior calculus, such as the de Rahm complex and the Hodge-Helmholtz decomposition.
  Through this, $\Psiec$ is tailored towards structure preserving discretizations that can adapt to solutions with varying regularity.
  The construction of $\Psiec$ is based on a novel spherical wavelet frame for $L_2(S^2)$ that we obtain by introducing scalable reproducing kernel frames.
  These extend scalable frames to weighted sampling expansions and provide an alternative to quadrature rules for the discretization of needlet-like scale-discrete wavelets.
  We verify the practicality of $\Psiec$ for numerical computations using the rotating shallow water equations.
  Our numerical results demonstrate that a $\Psiec$-based discretization of the equations attains accuracy comparable to those of spectral methods while using a representation that is well localized in space and frequency.
\end{abstract}

\begin{keywords}
  wavelets, structure preserving discretizations, shallow water equation
\end{keywords}

\begin{AMS}
  68Q25, 68R10, 68U05
\end{AMS}

\section{Introduction}
\label{sec:intro}

Adaptivity and structure preservation are important objectives for the discretization of partial differential equations.
Adaptivity is a prerequisite for optimal convergence rates when the regularity of a solution varies, that is for the efficiency of a numerical scheme.
For it, a representation that, in an appropriate sense, can ``zoom in'' on irregular features is required.
Structure preservation, which means that a discretization preserves essential aspects of a continuum theory, for example its conservation laws, plays a critical role for qualitatively correct solutions.
It relies on a representation that respects Cartan's exterior calculus of differential forms, e.g. the de Rahm complex.
On the two-sphere $\S2$, one application where adaptivity and structure preservation are of great importance are weather and climate simulations.
Adaptivity ensures there that local phenomena, e.g. extreme events such as hurricanes, are efficiently resolved ~\cite{Behrens2006,Zarzycki2015,Kevlahan2019} while structure preservation is needed for the conservation of energy and other invariants during the very long integrations times frequently required~\cite{Cotter2012,Cotter2014}.

To obtain numerical schemes for $S^2$ that are both adaptive and structure preserving, we introduce $\Psiec$, a local spectral exterior calculus for the sphere.
Its central objects are spherical differential $\fdeg$-form wavelets $\smash{\psi_{jk}^{\fdeg,\nu}(\omega)}$ that span the spaces $\Omega^{\fdeg}$ of differential $r$-forms and satisfy important properties of Cartan's exterior calculus (see Table~\ref{tab:notation} for our notation).
The wavelets $\smash{\psi_{jk}^{\fdeg,\nu}(\omega)}$ are localized around spherical harmonics frequencies $\smash{l = 2^{j}}$ and locations $\smash{\lambda_{jk}} \in \S2$, which enables them to adapt to local irregularities in a signal.
Their construction simultaneously ensures that they satisfy the de Rahm complex, e.g. the idempotence of the exterior derivative, $\dd \! \cdot \! \dd = 0$, and the Hodge-Helmholtz decomposition.
In contrast to existing discretizations, such as Finite Element Exterior Calculus~\cite{Arnold2018} or the TRiSK scheme~\cite{Thuburn2009,Ringler2010}, the differential $\fdeg$-form wavelets $\smash{\psi_{jk}^{\fdeg,\nu}(\omega)}$ of $\Psiec$ are bona fine forms in the sense of the continuous theory.
Hence, all operations from there are well defined and we show that most are also closed in $\Psiec$, and can therefore also be computed efficiently, cf. Fig.~\ref{fig:psiec_s2}.

The construction of $\Psiec$ is based on a new, scalar discrete wavelet frame for $L_2(S^2)$.
It is a discretization of scale-discrete, harmonic wavelets defined by window coefficients $\kappa_l^j$ bandlimited in the spherical harmonics domain.
Such wavelets have been proposed, for example, by McEwen, Durastani, and Wiaux~\cite{McEwen2016} and under the name needlets by Narcowich, Petrushev, and Ward~\cite{Narcowich2006a}.
Instead of using quadrature rules as for needlets~\cite{Narcowich2006a}, we discretize the scale-discrete wavelets, however, using scalable reproducing kernel frames, a concept we introduce for this purpose.
These are formed by quasi-uniform locations $\lambda_{jk} \in S^2$ and positive scaling factors $w_{jk} \in \R^+$ and extend the scalable frames recently introduced by Kutyniok, Okoudjou and co-workers~\cite{Kutyniok2013,Chen2015} to weighted sampling expansions.
Similar to spherical $t$-designs, we currently do not have theoretical guarantees for the existence of  scalable reproducing kernel frames $\{ (  \lambda_{jk} , w_{jk} ) \}_{k \in \mathcal{K}_j}$ for the spaces spanned by the wavelets.
However, we present a numerical algorithm that allows one to obtain them up to large degree $L$.

\begin{figure}[t]
  \includegraphics[width=\textwidth]{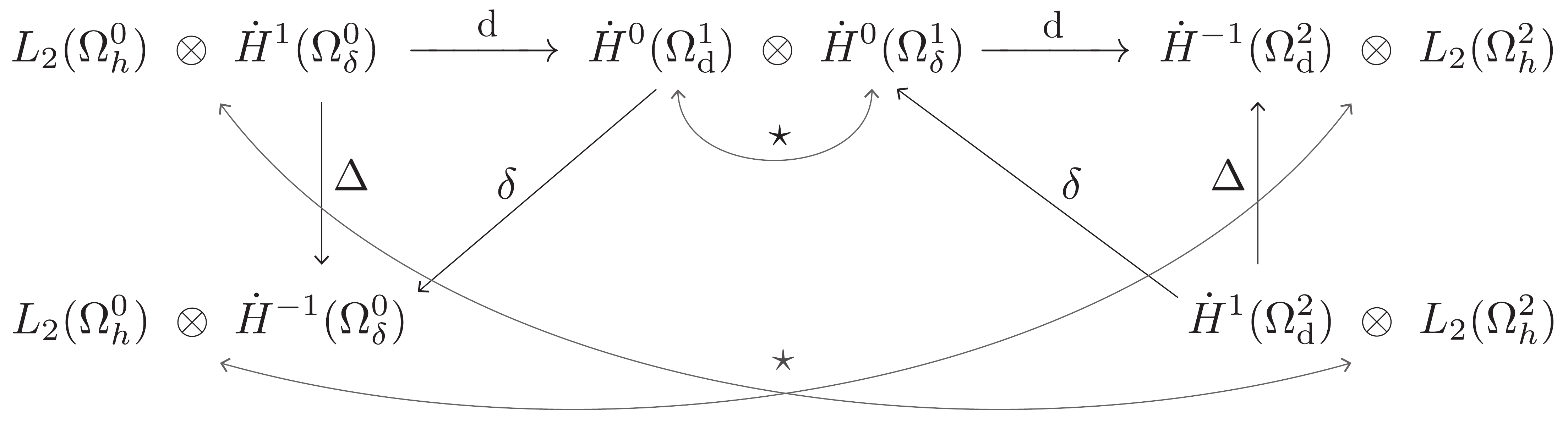}
  \caption{Conceptual description of our local spectral exterior calculus $\Psiec$ and its closures, indicated by arrows, which can be read similar to a commutative diagram.
  $\Psiec$ provides a discretization of the de Rahm complex, i.e. the co-chain complex of differential forms $\alpha \in \Omega^\fdeg(\S2)$ under the exterior derivative $\dd : \Omega^\fdeg(\S2) \to \Omega^{\fdeg+1}(\S2)$.
  In $\Psiec$, $\dd$ hence maps discrete $\fdeg$-forms to discrete $(\fdeg+1)$-forms and also $\dd \cdot \dd = 0$ holds.
  Our construction also respects the Hodge-Helmholtz decomposition and we have distinct representations for exact, co-exact and harmonic forms, that is for the spaces $\Omega_{\dd}^{\fdeg}$, $\Omega_{\delta}^{\fdeg}$, and $\Omega_{h}^{\fdeg}$.
  This provides us, for example, with control over the domain, image and kernel of the exterior derivative.
  $\Psiec$ is also closed under metric-dependent operators such as the co-differential $\delta : \Omega^{\fdeg+1}(\S2) \to \Omega^{\fdeg}(\S2)$ and the Laplace-Beltrami operator $\Delta : \Omega^{\fdeg}(\S2) \to \Omega^{\fdeg}(\S2)$ for $0$- and $2$-forms.
  The functional analytic setting of $\Psiec$ are the homogeneous Sobolev spaces $\dot{H}^{s}(\Omega_{\nu}^{\fdeg},\S2)$ and our discrete wavelet differential forms $\smash{\psi_{jk}^{\fdeg,\nu}(\omega)}$ provide (Stevenson) frames for these.
  The formal characterization of $\Psiec$ can be found in Theorem~\ref{thm:diff_form_wavelets:frame_property} and Theorem~\ref{thm:psiec}.
  }
  \label{fig:psiec_s2}
\end{figure}

The second building for the construction of $\Psiec$ is a spectral exterior calculus for $S^2$ that we introduce.
For scalar $0$- and $2$-forms, its basis functions are the usual spherical harmonics, i.e. $\smash{y_{lm}^{0,\delta} \equiv y_{lm}(\omega)}$ and $\smash{y_{lm}^{2,\dd} \equiv y_{lm}(\omega) \, d\omega}$, and for exact and co-exact $1$-forms the bases are obtained through the exterior derivative $\dd$ as $\smash{y_{lm}^{1,\dd}(\omega) \equiv \dd y_{lm}^{0,\delta}(\omega)}$ and $\smash{y_{lm}^{1,\delta}(\omega) = \star \dd y_{lm}^{0,\delta}(\omega)}$, which are covariant analogues of classical vector spherical harmonics.
In Theorem~\ref{prop:spectral_ec:properties} we show that the spectral differential forms $\smash{y_{lm}^{r,\nu}(\omega)}$ provide a near perfect discretization of the exterior calculus with, for example, closure of the de Rahm complex, a diagonal Hodge dual, and distinct basis functions for exact, co-exact and harmonic forms, i.e. for the spaces in the Hodge-Helmholtz decomposition.

\begin{table}
\begin{tabularx}{\textwidth}{ |l|X| }
   \hline
   $\omega = (\theta , \phi)$ & spherical coordinates for $S^2$ with $\theta \in [0,\pi]$ and $\phi \in [0,2\pi)$
   \\[2pt]
   $y_{lm}(\omega)$ & (Legendre) spherical harmonics
   \\[2pt]
   $\mathcal{H}_l(S^2)$ & space spanned by all spherical harmonics in band $l$
   \\[2pt]
   $\psi_{jk}(\omega)$ & scalar spherical wavelets at level $j$ and location $\lambda_{jk} \in S^2$
   \\[2pt]
   $\kappa_l^j$ & window coefficients for spherical (differential $r$-form) wavelets
   \\[2pt]
   $\{ (  \lambda_{jk} , w_{jk} ) \}_{{k \in \mathcal{K}_j}}$ & scalable reproducing kernel frame for $\mathcal{H}_{\leq L_j}$
   \\[2pt]
   $\Lambda_{j} = \{ \lambda_{jk} \}$ & locations of scalable reproducing kernel frame on level $j$
   \\[2pt]
   $\mathfrak{X}(\S2)$ & space of all vector fields on $\S2$
   \\[2pt]
   $\mathfrak{X}_{\mathrm{div}}(\S2)$ & space of divergence free vector fields on $\S2$
   \\[2pt]
   $\nu \in \{ \dd , \delta , h \}$ & $\fdeg$-form type w.r.t. to Hodge-Helmholtz decomposition
   \\[2pt]
   $\bar{\nu} \in \{ \delta , \dd , h \}$ & $\fdeg$-form type under Hodge dual
   \\[2pt]
   $\Omega_{\nu}^\fdeg(\S2)$ & space of differential $r$-forms of type $\nu$
   \\[3pt]
   $y_{lm}^{r \nu}(\omega)$ &  spectral differential $r$-form basis functions
   \\[4pt]
    $\psi_{jk}^{r \nu}(\omega)$ &  spherical differential $r$-form wavelet at level $j$ and $\lambda_{jk} \in \S2$
   \\[3pt]
    $\dot{H}^{s}(\Omega_{\nu}^r,\S2)$ & homogeneous Sobolev space of order $s$ for $r$-forms
   \\[2pt]
   $\bar{\alpha}$ & basis function coefficient vector for differential form $\alpha$
   \\[2pt]
   \hline
\end{tabularx}
  \caption{Notation used throughout the paper.}
  \label{tab:notation}
\end{table}

With the scalar wavelets $\psi_{jk}(\omega)$ and the $\smash{y_{lm}^{r,\nu}(\omega)}$, the differential $\fdeg$-form wavelets $\smash{\psi_{jk}^{\fdeg,\nu}(\omega)}$ that form $\Psiec$ are constructed by using the bandlimited window coefficients $\kappa_l^j$ of the $\psi_{jk}(\omega)$ together with the spectral form basis functions $\smash{y_{lm}^{r,\nu}(\omega)}$, i.e. the mother wavelets are given by
\begin{align}
  \psi_{jk}^{\fdeg,\nu}(\omega) = \sum_{l=0}^{\infty} \sum_{m=-l}^l a_l \, \kappa_l^{j} \, y_{lm}^{\fdeg,\nu}(\omega)
\end{align}
where $a_l$ is a weighting factor.
The spatial discretization is again provided by scalable reproducing kernel frames $\{ (  \lambda_{jk} , w_{jk} ) \}_{k \in \mathcal{K}_j}$.
By linearity, the differential $\fdeg$-form wavelets $\smash{\psi_{jk}^{\fdeg,\nu}(\omega)}$ are closed in the de Rahm complex.
Since we have distinct form wavelets for exact, co-exact and harmonic forms, namely, $\smash{\psi_{jk}^{\fdeg,\dd}(\omega)}$, $\smash{\psi_{jk}^{\fdeg,\delta}(\omega)}$, and $\smash{\psi_{jk}^{\fdeg,h}(\omega)}$, they intrinsically also respect the Hodge-Helmholtz decomposition, that is, we have explicit control of the domain, image and kernel of the exterior derivative.
For exact and co-exact forms, i.e. $\nu = \{ \dd , \delta \}$, the wavelets $\smash{\psi_{jk}^{\fdeg,\nu}(\omega)}$ provide (Stevenson) frames for the homogeneous Sobolev spaces $\dot{H}^{-\fdeg+1}(\Omega_{\nu}^{r},\S2)$, which are a natural functional analytic setting when harmonic forms are treated separately.
The use of Stevenson frames~\cite{Stevenson2003,Balazs2019}, i.e. leaving dual frame functions $\smash{\tilde{\psi}_{jk}^{\fdeg,\nu}(\omega)}$ in the dual space $\dot{H}^{r-1}(\Omega_{\nu}^{r},\S2)$, provides thereby the advantage that we obtain closure under the Hodge dual and for the Laplacian for $0$- and $2$-forms.
An overview of the relationship satisfied by $\Psiec$ is provided in Fig.~\ref{fig:psiec_s2}; the precise statements can be found in Theorem~\ref{thm:diff_form_wavelets:frame_property} and Theorem~\ref{thm:psiec} in Sec.~\ref{sec:forms}.

To demonstrate the practicality of $\Psiec$, we use it for the discretization of the rotating shallow water equations on the sphere $\S2$.
Numerical experiments demonstrate that our $\Psiec$-based discretization attains accuracy that is comparable to spectral methods for standard test cases~\cite{Williamson1992} as well as for forecast experiments.
With very good energy and enstrophy conservation, our experiments also demonstrate $\Psiec$'s potential for structure preserving numerical schemes.
We leave an investigation of adaptivity to future work.

The remainder of the paper is structured as follows.
In Sec.~\ref{sec:related} we discuss related work.
Subsequently, in Sec.~\ref{sec:wavelets} we construct the scalar discrete wavelet frame for $L_2(S^2)$.
The spectral exterior calculus for $S^2$ is introduced in Sec~\ref{sec:forms:spectral} followed by the local spectral exterior calculus $\Psiec$ in the remainder of the section.
The discretization of the rotating shallow water equation using $\Psiec$ is presented in Sec.~\ref{sec:shallow}.
We summarize the notation used throughout the paper in Table~\ref{tab:notation}.

\section{Related Work}
\label{sec:related}

Our work builds on various directions in the literature. 
We will discuss those most pertinent to our construction of $\Psiec$ and the scalar wavelets it is build on.

\subsection{Wavelets for $S^2$}

Wavelets strive for a compromise between spatial and frequency localization to be able to adaptively ``zoom in'' on irregular features. 
There is a considerably body of work on wavelets on the sphere $\S2$, see e.g.~\cite{Starck2006} and~\cite{Antoine2007} for surveys.
Closely related are also multi-scale radial basis function schemes for $\S2$, in particular since there is no uniform grid on $\S2$ and any discrete wavelet representation uses, in a certain sense, scattered locations. 
We will hence also briefly discuss spherical radial basis functions in the following.
For the purposes of the present work a wavelet representation will be called discrete when both the set of levels $j$ and the set of locations $\lambda_{jk} \in \S2$ on each level are discrete.

\subsubsection{Scalar wavelets for $S^2$}

Scalar, discrete wavelets on $\S2$ fall, broadly speaking, into two categories. 
The first one are subdivision-based wavelets, e.g.~\cite{Schroeder1995,Bonneau1999}.
For these one constructs a hierarchical partition of $\S2$, e.g. using on a subdivision scheme for a platonic solid, and the wavelets are defined based on it. 
Higher-order wavelets can thereby be obtained using the lifting scheme~\cite{Sweldens1996}.
For the second category, sometimes referred to as harmonic wavelets~\cite{Potts1996}, the wavelet functions are constructed in spherical harmonics space, i.e. in the frequency domain on $\S2$. 
Through this, these wavelets typically have excellent frequency localization.
However, the construction of harmonic wavelets with compact support (i.e. spherical Daubechies-type wavelets) is at the moment an open problem.
In contrast, subdivision-based wavelets are inherently compactly supported but they suffer from limited frequency localization.

The scalar wavelets we will introduce in Sec.~\ref{sec:wavelets} fall into the second category of harmonic wavelets. 
Their constructions is inspired by the needlets by Narcowich, Petrushev, and Ward~\cite{Narcowich2006a}.
However, instead of using quadrature rules for the discretization as in~\cite{Narcowich2006a}, we use scalable reproducing kernel frames. 
The wavelets are characterized by bandlimited, discrete window functions $\smash{\kappa_l^j}$ in the spherical harmonics domain and with a suitable choice quasi-exponential localization in the spatial domain can be attained~\cite{Narcowich2006a,McEwen2016}.
We will use the $\smash{\kappa_l^j}$ by McEwen, Durastani and Wiaux~\cite{McEwen2016} for our numerical examples. 

An alternative to wavelets are spherical radial basis functions (RBF), e.g.~\cite{Hubbert2001,Golitschek2001,LeGia2010}, which can also be constructed in a multi-scale framework, e.g.~\cite{LeGia2012}.
For these, one typically uses compactly supported spatial windows.  
Reconstruction, however, requires the solution of a linear system, which makes the approach ill suited when the number of points becomes very large. 
With our wavelets, a linear solve is avoided by using judiciously chose locations $\lambda_{jk} \in \S2$ and introducing weights $w_{jk}$ that provide additional degrees of freedom.
The $\lambda_{jk}$ and $w_{jk}$ form together the aforementioned scalable reproducing kernel frames that are the key to obtain a Parseval tight wavelet frame, and hence allow for self-dual reconstruction without the need for solving a linear system.

\subsubsection{Vector-valued wavelets for $S^2$}

As in the Euclidean case, vector-valued wavelets on $\S2$ received only little attentions in the literature. 
A construction is sketched by Freeden and co-workers~\cite[Ch. 13.3]{Freeden1998} but, to our knowledge, these were never implemented. 
Recently, Fuselier et al.~\cite{Fuselier2009,Fuselier2009a} considered vector-valued RBF interpolation of vector fields and proved error estimates for Sobolev data.
Their construction is similar to ours and they also obtain curl- and divergence free basis functions.
However, since they are in an RBF framework where the kernels are at arbitrary locations, reconstruction or the construction of dual functions requires the solution of a linear system.
Li, Broadbridge, Olenko, and Wang~\cite{Li2019} recently introduced an extension of needlets to vector fields of $\S2$ and studied fast algorithms for the projection and reconstruction. 
For our wavelets, fast algorithms still need to be considered but our differential form wavelets are in the larger context of the de Rahm complex.

%

\subsection{Discretizations of Exterior Calculus}

Numerical formulations that satisfy important properties of Cartan's exterior calculus, such as the de Rahm complex or Stokes' theorem, have been developed in various fields and come under different names. 
Their common objective is to obtain discrete systems that closely mimimic continuous partial differential equation, e.g. have the same or analogous conservation laws, and through this lead to better numerical perfomance, see e.g.~\cite{Boffi1999} or~\cite[Ch. 1]{Arnold2018}.

To our knowledge, the first structure preserving discretizations that can be found in the literature are the spectral methods developed for weather and climate simulation, e.g. for the barotropic vorticity equation~\cite{Silberman1954,Platzman1960}.
Although it was observed early on that these conserve energy and other invariants~\cite{Baer1961}, the underlying reasons were not studied systematically.
The spectral exterior calculus that we develop in Sec.~\ref{sec:forms:spectral} provides, in retrospect, some insight into this behavior.
Conservation properties were considered explicitly in the development of the so-called Arakawa grids~\cite{Arakawa1977} that carefully associate physical quantities with either vertices, edges or faces of a mesh~\cite{Pletzer2019}. 
This can be interpreted as distinguishing differential forms of different degree in the discretization and is a hallmark of numerical exterior calculus.
The explicit connections to differential forms was, however, only made much later.

Motivated by applications in electromagnetics and elasticity, N{\'e}d{\'e}lec~\cite{Nedelec1980,Nedelec1986} developed mixed finite elements for $\R^2$ and $\R^3$ that respect the structure of the differential operators of vector calculus by, effectively, associating $1$-forms with edges and $2$-forms with faces.
He did not make the connection to exterior calculus, although it is well known that in $\R^2$ and $\R^3$ vector and exterior calculus are isomorphic.
For computational electromagnetics, Bossavit developed the ideas further~\cite{Bossavit1997} and, to our knowledge, was the first who recognized the connection to Whitney forms~\cite{Whitney1957}, and hence to continuous differential forms.
The approach was brought into a mathematically more rigorous formulation by Hiptmaier~\cite{Hiptmair2001}. 

An alternative discretization of Cartan's exterior calculus using Whitney forms is Discrete Exterior Calculus (DEC) by Hirani, Desbrun and co-workers~\cite{Hirani2003,Desbrun2006} 
Recently, Budninskiy, Owahdi, and Desbrun~\cite{Budninskiy2019} extended this to a wavelet-based discrete exterior calculus with subdivision-type wavelets defined based on a multi-resolution mesh.
This is similar to the WaveTRiSK scheme proposed by Dubos and Kevlahan~\cite{Dubos2013,Aechtner2015} where subdivision wavelets are defined on staggered Arakawa C-grids. 
The works by Budninskiy, Owahdi, and Desbrun~\cite{Budninskiy2019} and Dubos and Kevlahan~\cite{Dubos2013,Aechtner2015} are, to our knowledge, the only ones in the literature where adaptivity and structure preservation have been considered together.

Finite Element Exterior Calculus (FEEC) by Arnold and co-workers~\cite{Arnold2006,Arnold2018} is another discretization of differential forms and the associated calculus. 
This work provides a comprehensive treatment of the subject, including of the functional analytic setting not considered in most other works. 
A similar approach are mimetic discretizations by Bochev and Hyman~\cite{Bochev2006}.

To improve convergence rates, also higher order finite element-type discretization of exterior calculus have been considered, see e.g.~\cite{Rapetti2009,Hiptmair2001,Rufat2014,Gross2018a}. 
Our work also provides a higher order discretization and our numerical results indicate that we attain the same convergence rates as spectral methods.

A fundamental difference between the above discretizations and our work is that wavelet differential $\fdeg$ forms $\psi_{jk}^{\fdeg,\nu}(\omega)$ are $\fdeg$-forms in the sense of the continuous theory.
They hence also satisfy the de Rahm complex in this sense.
In finite element-type discretizations such as DEC and FEEC, in contrast, one constructs a discrete structure with the same algebraic properties as the de Rahm complex.
Closely related to our work is in this respect is the ``spectral exterior calculus'' recently proposed by Berry and Giannakis~\cite{Berry2018}.
However, these authors are concerned with applications to manifold learning and did not consider localization.
Lessig~\cite{Lessig2018psiec} recently proposed a wavelet-based discretization of exterior calculus for $\R^2$ and $\R^3$ that also uses continuous differential form basis functions.
This work relies heavily on the structure of the exterior calculus in the Fourier domain, which is not available on the sphere.
In~\cite{Lessig2018psiec} also no numerical results were presented. 

\subsection{Computational Models for the Shallow Water Equation}

The rotating shallow water equations are a simplified, 2D model for atmospheric dynamics~\cite{Zeitlin2018}. 
Their discretization often serves as a stepping stone for the development of more complex schemes, and a correspondingly large number of approaches have been proposed in the literature.
Spectral models for the shallow water equations became practical with the advent of the fast transform method~\cite{Orszag1970,Eliasen1970} in the early 1970s and were developed, e.g., by Bourke~\cite{Bourke1972}.
The first mesh-based method for the shallow water equation with conservation properties was those by Arakawa and Lamb based on the C-grid~\cite{Arakawa1981}.
Taylor, Tribbia and Iskandarani~\cite{Taylor1997} develop a spectral element model, that aims at combining the advantages of finite-element and higher-order methods with faster convergence.

Dubos and Kevlahan~\cite{Dubos2013,Aechtner2015} recently proposed a method that constructs wavelets based on a multi-resolution C-grid and uses the TRiSK scheme~\cite{Thuburn2009,Ringler2010} to obtain structure preservation. 
An overview over other, mesh-based multi-resolution schemes is provided by Behrens~\cite{Behrens2006}.
Related to our approach is also a scheme proposed by Schwarztrauber~\cite{Swarztrauber2004} that uses vector spherical harmonics, i.e. the contravariant analogues of the $1$-form basis functions $y_{lm}^{1,\dd}(\omega)$ and $y_{lm}^{1,\delta}(\omega)$. 


\section{A Parseval Tight Discrete Wavelet Frame for $L_2(S^2)$}
\label{sec:wavelets}

The construction of our isotropic wavelet frame for scalar functions proceeds as follows.
The mother wavelets $\psi_j(\omega)$ are centered at the North Pole and defined through window coefficients $\smash{\kappa_l^j}$ in the spherical harmonics domain.
To cover the whole sphere and allow for the representation of arbitrary signals, the isotropic $\psi_j(\omega)$ are rotated to judiciously chosen locations $\lambda_{jk} \in S^2$ that are part of a scalable reproducing kernel frame $\{ (\lambda_{jk} , w_{jk}) \}_{k \in \mathcal{K}_j}$.
The use of these is the key to the Parseval tightness of the discrete wavelet representation.

After introducing notation, we will in the following first construct the scalable reproducing kernel frames.
With them in hand, we will be able to obtain the wavelet frame in Lemma~\ref{thm:wavelets}.

\subsection{Notation}
\label{eq:wavelets:notation}

Let $\S2$ be the unit sphere.
We will work with spherical coordinates where $\theta \in [0,\pi]$ is the angle to the $x_3$-axis and $\phi \in [0,2\pi]$ the azimuthal one in the $x_1$-$x_2$ plane.

The analogue of the Fourier transform on the sphere is the spherical harmonics expansion.
For any $f \in L_2(S^2)$ it is given by
\begin{align}
  \label{eq:spherical_harmonics:expansion}
  f(\omega)
  &= \sum_{l=0}^{\infty} \sum_{m=-l}^l \underbrace{\langle f(\eta) , y_{lm}(\eta) \rangle}_{\displaystyle f_{lm}} \, y_{lm}(\omega)
\end{align}
where the $y_{lm}(\omega)$ are the spherical harmonics that provide an orthonormal basis for $L_2(\S2)$ and $\langle \, , \rangle$ refers to the standard $L_2$ inner product.
For concreteness, we will work with the Legendre spherical harmonics given by~\cite{Freeden1998}
\begin{align}
  y_{lm}(\omega) = y_{lm}(\theta,\phi) = C_{lm} \, e^{i m \phi} \, P_{lm}(\cos{\theta})
\end{align}
where the $P_{lm}(\cdot)$ are associated Legendre functions and $C_{lm}$ is a constant so that the functions are orthonormal.

The triangular structure of the index set in Eq.~\ref{eq:spherical_harmonics:expansion} results from the fact that the $y_{lm}(\omega)$ are eigenfunctions of the Laplace-Beltrami operator on $S^2$.
We will denote the $(2l+1)$-dimensional space spanned by all spherical harmonics in band $l$ by $\mathcal{H}_{l}(\S2)$ and those spanned by the $y_{lm}(\omega)$ up to a maximum degree $L$ by $\mathcal{H}_{\leq L}(S^2)$, i.e. $\mathcal{H}_{\leq L}(S^2) = \bigoplus_{l=0 \cdots L} \mathcal{H}_l(S^2)$.
The $\mathcal{H}_l(S^2)$ are rotation invariant, i.e for $f \in \mathcal{H}_l(S^2)$ one has $R^* f \in \mathcal{H}_l(S^2)$ for any $R \in \mathrm{SO}(3)$.
The rotation is implemented in the spherical harmonics domain by Wigner-D matrices $W_{lm}^{m'}(R)$ that map for each $\mathcal{H}_l(S^2)$ the coefficients $f_{lm}$ to those of the rotated signal.
We refer, for example, to the book by Freeden~\cite{Freeden1998} for more details on spherical harmonics and $L_2(S^2)$.

\subsection{Scalable Reproducing Kernel Frames for $\mathcal{H}_{\leq L}(S^2)$}
\label{sec:wavelets:scalable_rk_frame}

Before turning to scalable reproducing kernel frames on the sphere, we introduce the concept.
It generalizes tight sampling expansions formulated in the setting of using reproducing kernel Hilbert spaces.

\subsubsection{Scalable reproducing kernel frames}
We begin with the definition.

\begin{definition}
  \label{def:scalable_rk_frame}
  Let $\mathcal{H}_k(\M)$ be a reproducing kernel Hilbert space defined over a domain $\M$ with reproducing kernel $k_x(y) \equiv k(x,y)$.
  A \emph{scalable reproducing kernel frame} defined over the set $\{ ( w_k \, , \, k_{\lambda_k}(x) ) \}_{k \in \mathcal{K}}$ with positive weights $w_k \in \R^+$, locations $\lambda_k \in \mathcal{\M}$, and index set $\mathcal{I}$ is a frame for $\mathcal{H}_k(\M)$ such that
  \begin{align}
    \label{eq:scalable_rk_frame}
    f(x)
    = \sum_{k \in \mathcal{K}} \big\langle f(y) , k_{\lambda_k}(y) \big\rangle \, w_k \, k_{\lambda_k}(x)
    = \sum_{k \in \mathcal{K}} f(\lambda_k) \, w_k \, k_{\lambda_k}(x)  .
  \end{align}
  for all $f \in \mathcal{H}(\M)$.
\end{definition}

Compared to a Parseval tight reproducing kernel frame, such as the $\mathrm{sinc}$-basis expansion in the classical Shannon-Whittaker-Kotelnikov sampling theorem, the weights $w_k$ in the above definition provide additional flexibility. For instance, they can compensate for a lack of equi-distribution of the locations $\lambda_k$.
This is particularly useful when equi-spaced $\lambda_k$ are difficult to obtain or do not exist, such as when $\M$ is a nontrivial manifold.
In contrast to a general irregular sampling theorem, e.g.~\cite{Benedetto1992,Higgins1994}, the dual frame functions (or reconstruction kernels) $w_k \, k_{\lambda_k}(x)$ in Eq.~\ref{eq:scalable_rk_frame} differ from the reproducing kernels, $k_{\lambda_k}(x)$, thereby only by the scalar weight $w_k$.
Hence, they are immediately available and no expensive computation is necessary to obtain them.
This becomes particularly clear by introducing the weighted reproducing kernel $\bar{k}_{\lambda_k}(x) \equiv \sqrt{w_k} \, k_{\lambda_k}(x)$.
Eq.~\ref{eq:scalable_rk_frame} can then be written in the symmetric form
\begin{align}
  \label{eq:scalable_rk_frame:symmetric}
  f(x)
  = \sum_{i \in \mathcal{I}} \big\langle f(y) , \bar{k}_{\lambda_k}(y) \big\rangle \, \bar{k}_{\lambda_k}(x)
  = \sum_{i \in \mathcal{I}} \sqrt{w_k} \, f(\lambda_{k}) \, \bar{k}_{\lambda_k}(x)
\end{align}
that affords most of the practical advantages of an orthonormal sampling theorem yet has more flexibility through the $w_k$.

The weights $w_k$ can also be understood as scaling parameters for the frame vectors $k_{\lambda_k}(x)$.
Eq.~\ref{eq:scalable_rk_frame} is then a scalable frame in the sense recently introduced by Kutyniok, Okoudjou and co-workers~\cite{Kutyniok2013,Okoudjou2015,Chen2015}.
We borrow the nomenclature in Def.~\ref{def:scalable_rk_frame} from this connection.
For notational simplicity, we will in the following often identify a scalable reproducing kernel frame with its generating set, i.e. say that $\{ (\lambda_k , w_k) \}_{i \in \mathcal{I}}$ is the frame, or even more concisely $( \Lambda, w )$ where $\Lambda = \{ \lambda_k \}_{i \in \mathcal{I}}$ and $w = \{ w_k \}_{i \in \mathcal{I}}$.

\subsubsection{Scalable reproducing kernel frames $\mathcal{H}_{\leq L_j}(S^2)$}
For the construction of the discrete wavelets, we require scalable reproducing kernel frames for the spaces $\mathcal{H}_{\leq L_j}(S^2)$, e.g. with $L_j=2^j-1$.
For $L_j < \infty$, $\mathcal{H}_{\leq L_j}$ is finite dimensional and hence a reproducing kernel Hilbert space.
Its reproducing kernel is
\begin{align}
  \label{eq:HleqL:rk}
  k(\omega,\eta)
  = \sum_{l=0}^{L} \sum_{m=-l}^l y_{lm}^*(\omega) \, y_{lm}(\eta)
  = \sum_{l=0}^L \frac{2l+1}{4\pi} P_l(\omega \cdot \eta)
\end{align}
where the right hand side is a consequence of the spherical harmonics addition theorem.
A scalable reproducing kernel frame $(\Lambda_j,w_j)$ for $\mathcal{H}_{\leq L_j}(S^2)$ then consists of locations $\lambda_{jk} \in S^2$ and associated weights $w_{jk}$ such that every $L_j$-bandlimited function can be written as in Def.~\ref{def:scalable_rk_frame}.
Using the spherical harmonics representation in Eq.~\ref{eq:HleqL:rk}, it follows from a straightforward calculation that the frame can equivalently be characterized by
\begin{align}
  \label{eq:HleqL:scalable_frame_condition}
   \sum_{k \in \mathcal{K}}  w_{jk} \, y_{lm}^*(\lambda_{jk}) \, y_{l'm'}(\lambda_{jk}) = \delta_{l l'} \, \delta_{m m'} \quad , \quad l,l' \leq L .
\end{align}
Eq.~\ref{eq:HleqL:scalable_frame_condition} can be seen as a perfect reconstruction condition in terms of the $w_k$ and $\lambda_k$.
For an ideal, tight frame one has $w_k = 4 \pi / \vert \Lambda_j \vert$.

An important theoretical and practical question is the existence of scalable reproducing kernel frames for the spaces $\mathcal{H}_{\leq L_j}(\S2)$.
An answer thereby depends on the cardinality of the frame, i.e. its redundancy, and existing results in the literature strongly suggest that an increasing redundancy simplifies the problem.
From a practical point of view, however, one is interested in frames with a small redundancy.
At the moment, we are not able to guarantee the existence of scalable reproducing kernel frames for $\mathcal{H}_{\leq L_j}(\S2)$, at least for $L_j>1$, independent of the redundancy.
In this respect the situation is similar to those for related problems such as extremal points on the sphere~\cite{Womersley2001a,Sloan2004} and spherical $t$-designs, e.g.~\cite{Brauchart2015,Womersley2017}.
Following the approach taken for these in the literature~\cite{Hardin1996,Womersley2001a,Sloan2004,Sloan2009,Graf2011a,Womersley2017}, we also find weights and locations that provide scalable reproducing kernel frames using numerical optimization.
The details of the numerical construction are presented in Appendix~\ref{sec:scalable_rk_frames:optimization}.

\begin{figure}[t]
  \includegraphics[width=\textwidth]{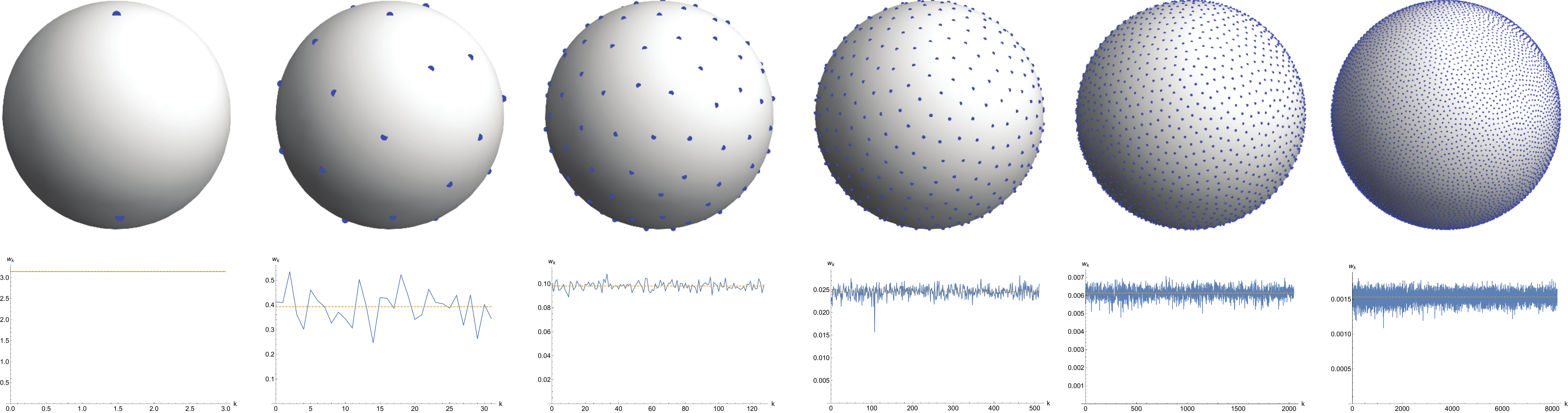}
  \caption{Locations $\lambda_{jk}$ (first row) and weights $w_{jk}$ for levels $j=1$, where the locations are given by the vertices of the tetrahedron, to $j=6$, where there are 8192 ones. The orange line in the plot for the weights indicates the ideal, uniform value $w_k = 4\pi / \vert \Lambda_j \vert$, i.e. those obtained for a tight reproducing kernel frame. A visualization of $w_{jk}$ as a function of $\lambda_{jk}$ is shown in Fig.~\ref{fig:lambdas_weighted}.}
  \label{fig:lambdas_weights}
\end{figure}

\subsubsection{Scalable reproducing kernel frames $\mathcal{H}_{\leq L_j}(S^2)$ with $L_j = 2^j - 1$}
To provide some insight into the results of the numerical construction, we consider the special case $L_j = 2^j - 1$.
This choice will also be used in the remainder of the paper whenever a value for $L_j$ needs to be fixed.

For the first two levels, corresponding to $L_{j=0}=0$ and $L_{j=1}=1$, the scalable reproducing kernel frames are defined analytically as the North Pole and the vertices of the tetrahedron.
In both cases one has an orthonormal reproducing kernel basis, i.e. there is no redundancy and $w_{jk} = 4\pi / \vert \Lambda_j \vert$.
The remaining levels all have redundancy $2$.
Our numerical experiments strongly indicate that for $L > 1$ no non-redundant scalable reproducing kernel frames exist and we also believe that for $L \gg 1$ a redundancy of $2$ is optimal.

Motivated by the dyadic grids used for wavelets in Euclidean spaces, the locations $\lambda_j$ are also chosen to be nested so that when $\lambda_j$ is part of the generating set for $\mathcal{H}_{\leq L_j}(\S2)$ then this is also true for all $\mathcal{H}_{\leq L_{j'}}(\S2)$ with $j' > j$, i.e. $\Lambda_j \subset \Lambda_{j'}$.
Note that for a redundancy of $2$ also the dimension of the resulting sequence of spaces $\mathcal{H}_{\leq L_j}(\S2)$ is consistent with the dyadic grids used in Euclidean space since the number of locations, $\vert \Lambda_j \vert = 2 (L_j + 1)^2 = 2^{2j+1}$, quadruples from level to level.
This is also the principal reasons for the choice $L_j = 2^j - 1$.
In contrast to the three different wavelets ones has for non-standard tensor product wavelets in the plane, however, we only have one fully isotropic wavelet.

Examples of our scalable reproducing kernel frames are shown in Fig.~\ref{fig:lambdas_weights}.
Fig.~\ref{fig:lambdas_weighted} displays the weights $w_{jk}$ as a function of the locations $\lambda_{jk}$.
The cardinality and the average distance $\bar{d}_j^{\mathrm{opt}}$ of the optimized points as well as their mesh norm $h_{\Lambda_j}$ for different levels is presented in the following table:
\vspace{1em}
\begin{center}
\newcolumntype{C}{>{\centering\arraybackslash}X}
\begin{tabularx}{0.9\textwidth}{ C|C|C|C|C|C }
  \hline
  $j$ & $L=2^j-1$ & $\vert \Lambda_j \vert$ & $\bar{d}_j^{\, \mathrm{opt}}$ & $\bar{d}_j^{\, \mathrm{avg}}$ & $h_{\Lambda_j}$
  \\[2pt]
  \hline
  $2$ & $3$ & $32$ & $0.7020$ & $0.7071$ & $0.4826$
  \\[2pt]
  $3$ & $7$ & $128$ & $0.3500$ & $0.3536$ & $0.2445$
  \\[2pt]
  $4$ & $15$ & $512$ & $0.1736$ & $0.1768$ & $0.1248$
  \\[2pt]
  $5$ & $31$ & $2048$ & $0.0866$ & $0.0884$ & $0.0615$
  \\[2pt]
  $6$ & $63$ & $8192$ & $0.0432$ & $0.0442$ & $0.0310$
  \\ \hline
\end{tabularx}
\end{center}
\vspace{1em}
The second but last column in the table is an idealized average distance that would be obtained when all locations have the same area associated with them and this is over a circular neighborhood, i.e.
\begin{align}
  \bar{d}_j^{\, \mathrm{avg}} = 2 \, \sqrt{\frac{4 \pi r^2}{\vert \Lambda_j \vert \pi}} = 4 \, \vert \Lambda_j \vert^{-j/2} .
\end{align}
While a circular neighborhood is geometrically unattainable, it is reasonable idealization in light of known results for optimal finite frames, cf.~\cite{Benedetto2003}.
The good agreement of the last two columns in the table is a clear indication of the well distributedness of the locations, which is also evident from a visual inspection of the plots in Fig.~\ref{fig:lambdas_weights}.

\begin{figure}
  \centering
  \includegraphics[trim={2.5cm 4.9cm 0 2.75cm}, clip, width=0.25\textwidth]{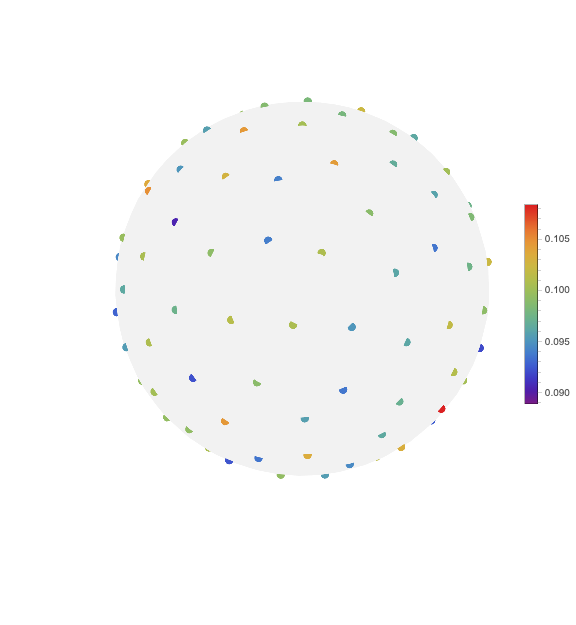}
  \includegraphics[trim={2.5cm 4.9cm 0 2.75cm}, clip, width=0.25\textwidth]{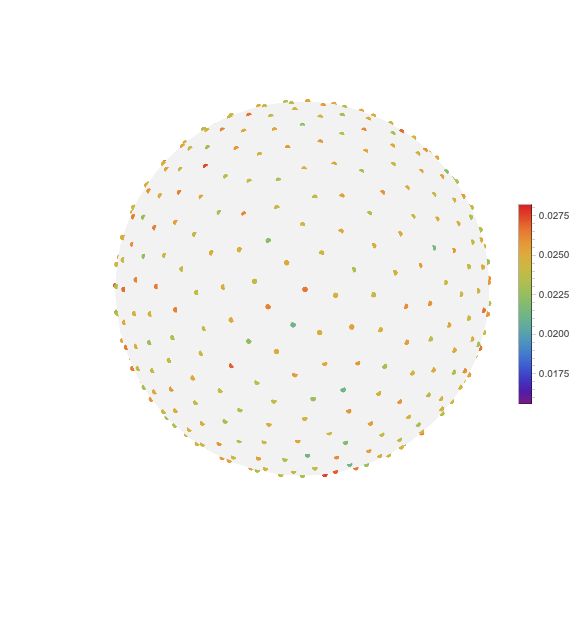}
  \includegraphics[trim={2.5cm 4.9cm 0 2.75cm}, clip, width=0.25\textwidth]{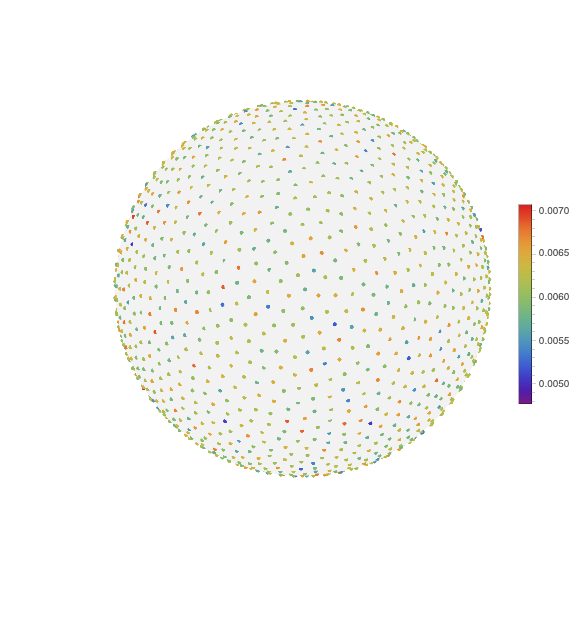}
  \caption{Locations and color-coded weights for $j=3,4,5$. The plots demonstrate that the weights compensate for the lack of equi-distribution of points, with large weights where the local density of points is low and small ones where the density is high.}
  \label{fig:lambdas_weighted}
\end{figure}

\begin{remark}[Connection to interpolatory quadrature rules]
\label{remark:wavelets_quadrature}
Our scalable reproducing kernel frames are closely related to interpolatory quadrature rules.
Specializing Eq.~\ref{eq:scalable_rk_frame} to $H_{\leq L}(S^2)$ and using Eq.~\ref{eq:HleqL:rk} we obtain for $f \in H_{\leq L}(S^2)$
\begin{subequations}
\begin{align}
  \int_{S^2} f(\omega) \, d\omega
  &= \sum_{i \in \mathcal{K}} f(\lambda_k) \, w_k \, \sum_{l,m} y_{lm}^*(\lambda_k) \int_{S^2} y_{lm}(\omega) \, d\omega  .
\end{align}
Since only $y_{00}(\omega)$ has a non-vanishing integral, which is equal to $\sqrt{4\pi}$, and $y_{00}(\omega) = 1/\sqrt{4\pi}$ we have
\begin{align}
   \int_{S^2} f(\omega) \, d\omega
   &= \sum_{i \in \mathcal{K}} w_k \, f(\lambda_k) .
\end{align}
\end{subequations}
For interpolatory quadrature rules~\cite[Sec.~4.3,~4.4]{Hesse2010a}, including those with extremal points, one has, typically, a biorthogonal, non-redundant reproducing kernel basis expansions.
The quadrature weights are then given by $w_k = \tilde{k}_{00}^i$, where the $\tilde{k}_{lm}^i$ are the spherical harmonics coefficients for the dual kernel functions (or reconstruction kernels) satisfying the biorthogonality (and interpolation) condition $\langle k_{\lambda_k}(\omega) , \tilde{k}_j(\omega) \rangle = \delta_{ij}$.
Our construction is hence considerably more stringent than interpolatory quadrature rules in that we require, up to the scaling given by the $w_k$, a Parseval tight frame.
To satisfy these requirements, we rely on redundancy.
As is apparent from Fig.~\ref{fig:lambdas_weights}, our quadrature weights are always positive, as desired~\cite{Hesse2010a}, and close to the optimal value of $4\pi / N$.
Similar to the situation for extremal points~\cite{Womersley2001a,Sloan2004}, we currently do not have a proof that guarantees the positivity.
\end{remark}

\subsection{A Discrete Wavelet Frame for $L_2(S^2)$}
\label{sec:wavelets:discrete}

The discrete wavelet frame is obtained from a set of mother wavelets $\{ \psi_j(\omega) \}_{j=0 \cdots}$, defined at the North Pole, that are rotated to the locations of scalable reproducing kernel frames $(\Lambda_j , w_j)$.
Since the wavelets we consider are isotropic, the mother wavelets $\psi_{j}(\omega)$ at the pole are given by
\begin{align}
  \label{eq:mother_wavelet}
  \psi_j(\omega) = \sum_{l} \kappa_l^j \, y_{l0}(\omega) .
\end{align}
The $L_j$-bandlimited window coefficients $\kappa_l^j$ are carefully chosen to ensure that the $\psi_j(\omega)$ induce a tight frame for $L_2(S^2)$, see Theorem~\ref{thm:wavelets} below, and that they are well localized in the spherical harmonics and spatial domains.
As in the Euclidean case, the mother wavelets are complemented by father scaling functions $\phi_j(\omega)$ to represent the low frequency parts of a signal.
The father scaling functions are also isotropic and defined at the North Pole through window coefficients $\bar{\kappa}_{l}^j$, analogous to Eq.~\ref{eq:mother_wavelet}. The $\bar{\kappa}_{l}^j$ are chosen so that the scaling functions on level $j$ together with the wavelets on the same and all finer levels lead to a representation for $L_2(S^2)$.
To simplify notation, we will locate the scaling functions on level $j=-1$, i.e. $\psi_{-1}(\omega) \equiv \phi_0(\omega)$ (in applications it is sometimes advantageous to use some level $j'$ as coarsest one and then one relabels the levels so that $j \rightarrow j - j'$).

Given the mother wavelet centered at the North Pole, it has to be rotated to cover the entire sphere.
Equivalently, we used so far only $m=0$ in the spherical harmonics frequency domain, cf. Eq.~\ref{eq:mother_wavelet}, and we need to populate the entire triangular $(l,m)$ parameter space.
 For the mother wavelets $\psi_j(\omega)$ with coefficients $\kappa_l^j$, the rotation in spherical harmonics space through the Wigner-D matrices is given by $W_{lm}^{0}(R_{\eta}) = \sqrt{4\pi / 2l+1} \, y_{lm}(\eta)$, where $R_{\eta}$ is the rotation from the North Pole to $\eta \in S^2$.

To obtain a tight frame, we will use as locations the $\lambda_{jk} \in \Lambda_j$ of a scalable reproducing kernel frame $\{ (\lambda_{jk} \, , \, w_{jk}) \}_{k \in \mathcal{K}_j}$ for $\mathcal{H}_{L_j}(\S2)$ and the square roots of the weights will provide weighting factors to obtain a symmetric representation where primary and dual functions coincide, analogous to Eq.~\ref{eq:scalable_rk_frame:symmetric}.
We define the discrete spherical wavelets thus defined as
\begin{align}
  \label{eq:wavelets:lambda_jk}
  \psi_{jk}(\omega)
  \equiv \sum_{l=0}^{L_j} \sum_{m=-l}^l \, \underbrace{\sqrt{\frac{4\pi}{2l+1}} \, \sqrt{w_{jk}} \, \kappa_l^j \, y_{lm}(\lambda_{jk})}_{\displaystyle \kappa_{lm}^{jk} \equiv \psi_{lm}^{jk}} \, y_{lm}(\omega)
\end{align}
where the $\kappa_{lm}^{jk} \equiv \psi_{lm}^{jk}$ denote the spherical harmonics coefficients of the $\psi_{jk}(\omega)$.
With Eq.~\ref{eq:wavelets:lambda_jk}, the main result of the present section is the following.

\begin{figure}
  \centering
  \includegraphics[width=0.7\textwidth]{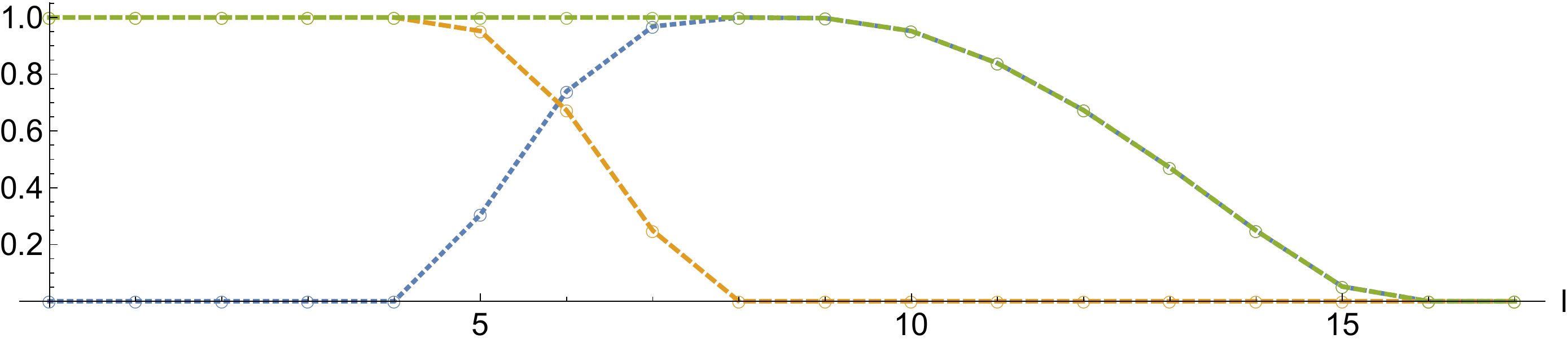}
  \caption{The difference between the scaling function windows $\bar{\kappa}_j^l$ on adjacent levels $j$ and $j+1$ (dashed) defines the wavelet windows $\kappa_j^l$ for level $j$ (dotted). Windows are normalized by $\sqrt{2l+1 / 4 \pi}$.}
  \label{fig:kappa_j_l_phi_psi}
\end{figure}

\begin{theorem}
  \label{thm:wavelets}
  Let the $L_j$-bandlimited window coefficients $\kappa_{l}^j$ satisfy the Calder{\'o}n admissibility condition
  \begin{align}
    \label{eq:calderon_condition}
   \forall l \ : \quad \frac{4\pi}{2l+1} \sum_{j=-1}^{\infty} \vert \kappa_{l}^j \vert^2 = 1 \quad
  \end{align}
  and let $\{ (\lambda_{jk} , w_{jk}) \}_{k \in \mathcal{K}_j}$ for $j = 0, 1 \cdots$ be a sequence of scalable reproducing kernel frames for the spaces $\mathcal{H}_{\leq L_j}(\S2)$.
  Then the wavelets in Eq.~\ref{eq:wavelets:lambda_jk} form a Parseval tight frame for $L_2(S^2)$.
\end{theorem}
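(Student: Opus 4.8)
The plan is to verify the Parseval identity $\sum_{j,k}\lvert\langle f,\psi_{jk}\rangle\rvert^2 = \lVert f\rVert_{L_2(\S2)}^2$ directly for an arbitrary $f\in L_2(\S2)$, working entirely in the spherical harmonics domain. Write $f=\sum_{l,m}f_{lm}\,y_{lm}$ with $f_{lm}=\langle f,y_{lm}\rangle$ as in Eq.~\ref{eq:spherical_harmonics:expansion}, and read off the spherical harmonics coefficients $\kappa_{lm}^{jk}=\sqrt{4\pi/(2l+1)}\,\sqrt{w_{jk}}\,\kappa_l^j\,y_{lm}(\lambda_{jk})$ from Eq.~\ref{eq:wavelets:lambda_jk}. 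Since the $w_{jk}$ are positive reals and the $\kappa_l^j$ may be taken real, orthonormality of the $y_{lm}$ gives $\langle f,\psi_{jk}\rangle = \sqrt{w_{jk}}\sum_{l=0}^{L_j}\sqrt{4\pi/(2l+1)}\,\kappa_l^j\sum_{m=-l}^l f_{lm}\,\overline{y_{lm}(\lambda_{jk})}$; this is the only place where the spatial discretization enters.

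First I would take the modulus squared of this expression, expand it into a double sum over indices $(l,m)$ and $(l',m')$ with $l,l'\le L_j$, and sum over $k\in\mathcal{K}_j$. The weights and locations then appear only through the factor $\sum_{k\in\mathcal{K}_j}w_{jk}\,\overline{y_{lm}(\lambda_{jk})}\,y_{l'm'}(\lambda_{jk})$, which by the defining property of a scalable reproducing kernel frame for $\mathcal{H}_{\leq L_j}(\S2)$, namely Eq.~\ref{eq:HleqL:scalable_frame_condition}, equals $\delta_{ll'}\delta_{mm'}$ for all $l,l'\le L_j$. This collapses the double sum onto the diagonal and yields, for each level, $\sum_{k\in\mathcal{K}_j}\lvert\langle f,\psi_{jk}\rangle\rvert^2 = \sum_{l=0}^{L_j}\frac{4\pi}{2l+1}\,\lvert\kappa_l^j\rvert^2\sum_{m=-l}^l\lvert f_{lm}\rvert^2$.

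Next I would sum over the levels $j\ge-1$. Because the window coefficients are $L_j$-bandlimited, $\kappa_l^j=0$ whenever $l>L_j$, so the constraint $l\le L_j$ is vacuous and the $j$- and $l$-summations may be interchanged; as all terms are nonnegative this is legitimate by Tonelli's theorem, irrespective of convergence. The $j$-sum then acts only on $\frac{4\pi}{2l+1}\lvert\kappa_l^j\rvert^2$, and the Calderón admissibility condition Eq.~\ref{eq:calderon_condition} makes it equal to $1$ for every $l$. What remains is $\sum_{l,m}\lvert f_{lm}\rvert^2=\lVert f\rVert_{L_2(\S2)}^2$ by Parseval for the spherical harmonics, which is exactly the Parseval tight frame property; the reconstruction formula $f=\sum_{j,k}\langle f,\psi_{jk}\rangle\,\psi_{jk}$ then follows by polarization.

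I do not expect a genuine obstacle: once both hypotheses are available the argument is a bookkeeping computation. The points needing care are tracking the normalization factors $\sqrt{4\pi/(2l+1)}$ (from the Wigner-$D$ entry $W_{lm}^0(R_\eta)$) and $\sqrt{w_{jk}}$ (from the symmetric scalable-frame normalization), handling complex conjugation so that the cross-terms match the form of Eq.~\ref{eq:HleqL:scalable_frame_condition}, and the harmless fact that the coarsest level $j=-1$ carries the scaling functions $\phi_0$, for which one uses the scalable reproducing kernel frame of the coarsest space, so the computation above applies verbatim with $\bar\kappa_l^0$ in place of $\kappa_l^j$.
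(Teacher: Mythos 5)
Your argument is correct and follows essentially the same route as the paper: the collapse of the $k$-sum via Eq.~\ref{eq:HleqL:scalable_frame_condition} followed by the collapse of the $j$-sum via the Calder{\'o}n condition is exactly the content of Lemma~\ref{prop:wavelets:calderon}, which you reproduce contracted against $f_{lm}\overline{f_{l'm'}}$. The only cosmetic difference is that you verify the norm identity $\sum_{j,k}\vert\langle f,\psi_{jk}\rangle\vert^2=\Vert f\Vert_{L_2}^2$ and recover the reconstruction formula by polarization, whereas the paper inserts the lemma into the spherical harmonics expansion and obtains the reconstruction $f=\sum_{j,k}\langle f,\psi_{jk}\rangle\,\psi_{jk}$ directly.
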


For the proof of the theorem we will use the following lemma, which will also require again in Sec.~\ref{sec:forms}.

\begin{lemma}
  \label{prop:wavelets:calderon}
  Under the assumptions in Theorem~\ref{thm:wavelets},
  \begin{align}
    \label{eq:thm:wavelets:proof:5}
    \delta_{l l'} \, \delta_{m m'} &= \sum_{j=-1}^{\infty} \sum_{k \in \mathcal{K}_j} \psi_{lm}^{jk \, *} \, \psi_{l'm'}^{jk} \quad , \quad \forall l,l',m,m' .
  \end{align}
\end{lemma}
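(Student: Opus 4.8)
The plan is to substitute the explicit spherical harmonics coefficients from Eq.~\ref{eq:wavelets:lambda_jk} into the right-hand side of Eq.~\ref{eq:thm:wavelets:proof:5} and then separate the sum over levels $j$ from the sum over locations $k \in \mathcal{K}_j$. Writing $\psi_{lm}^{jk} = \sqrt{4\pi/(2l+1)}\,\sqrt{w_{jk}}\,\kappa_l^j\, y_{lm}(\lambda_{jk})$, the product $\psi_{lm}^{jk\,*}\,\psi_{l'm'}^{jk}$ factors as
\begin{align}
  \psi_{lm}^{jk\,*}\,\psi_{l'm'}^{jk}
  = \sqrt{\tfrac{4\pi}{2l+1}}\,\sqrt{\tfrac{4\pi}{2l'+1}}\;\kappa_l^{j\,*}\,\kappa_{l'}^j\;
    w_{jk}\, y_{lm}^*(\lambda_{jk})\, y_{l'm'}(\lambda_{jk}).
\end{align}
The first observation is that the window coefficients $\kappa_l^j$ are real (they come from the McEwen--Durastani--Wiaux construction and in any case the admissibility condition only constrains $|\kappa_l^j|^2$), so $\kappa_l^{j\,*}\kappa_{l'}^j = \kappa_l^j\kappa_{l'}^j$; even if one keeps them complex this plays no role because of what comes next.

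Next I would carry out the sum over $k\in\mathcal{K}_j$ first, for fixed $j$. Since $l,l'\le L_j$, both $y_{lm}$ and $y_{l'm'}$ lie in $\mathcal{H}_{\le L_j}(\S2)$, so the scalable reproducing kernel frame property in the form of Eq.~\ref{eq:HleqL:scalable_frame_condition} applies and gives
\begin{align}
  \sum_{k\in\mathcal{K}_j} w_{jk}\, y_{lm}^*(\lambda_{jk})\, y_{l'm'}(\lambda_{jk}) = \delta_{ll'}\,\delta_{mm'}.
\end{align}
Substituting this back collapses the double sum to $\sum_{j} \frac{4\pi}{2l+1}\,|\kappa_l^j|^2\,\delta_{ll'}\,\delta_{mm'}$ — the cross terms in $l$ versus $l'$ survive only when $l=l'$ because of the Kronecker deltas, and then the two $\sqrt{4\pi/(2l+1)}$ factors combine to $4\pi/(2l+1)$. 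Finally, invoking the Calder\'on admissibility condition Eq.~\ref{eq:calderon_condition}, which states exactly that $\frac{4\pi}{2l+1}\sum_{j=-1}^{\infty}|\kappa_l^j|^2 = 1$ for every $l$, yields $\delta_{ll'}\,\delta_{mm'}$, which is the claim. One should be a little careful that the level range in Eq.~\ref{eq:thm:wavelets:proof:5} is $j\ge -1$ to include the scaling-function level $\psi_{-1}\equiv\phi_0$, and that Eq.~\ref{eq:HleqL:scalable_frame_condition} is invoked for that level with the corresponding $\bar\kappa_l^{-1}$ in place of $\kappa_l^{-1}$ and the analogous scalable frame.

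The only genuine subtlety — and the step I would treat most carefully — is the interchange of the sum over $j$ with the sum over $k$ (and implicitly the validity of rearranging the doubly infinite series before the telescoping/Calder\'on step). Because the summand is a product of a $j$-dependent scalar $|\kappa_l^j|^2$ with a $j$-dependent \emph{finite} sum over $k\in\mathcal{K}_j$ that equals the fixed quantity $\delta_{ll'}\delta_{mm'}$, the series in $j$ is just a scalar series whose absolute convergence is guaranteed by Eq.~\ref{eq:calderon_condition} (the terms $\frac{4\pi}{2l+1}|\kappa_l^j|^2$ are nonnegative and sum to $1$), so no delicate analytic argument is actually needed once the inner sum has been evaluated. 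Everything else is bookkeeping: tracking the $\sqrt{4\pi/(2l+1)}$ normalization, the $\sqrt{w_{jk}}$ pairing into $w_{jk}$, and the fact that each $l$ only contributes at levels $j$ with $L_j\ge l$, which is automatically encoded since $\kappa_l^j$ is $L_j$-bandlimited (i.e. $\kappa_l^j=0$ for $l>L_j$).
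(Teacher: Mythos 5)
Your proposal is correct and follows essentially the same route as the paper's own proof: substitute the coefficients from Eq.~\ref{eq:wavelets:lambda_jk}, evaluate the inner sum over $k$ with the scalable reproducing kernel frame condition Eq.~\ref{eq:HleqL:scalable_frame_condition} (with bandlimitedness of $\kappa_l^j$ handling levels where $l$ or $l'$ exceeds $L_j$), and then apply the Calder\'on condition Eq.~\ref{eq:calderon_condition}. Your extra remarks on the $j=-1$ scaling-function level and on the harmlessness of the rearrangement are sensible refinements of the same argument, not a different approach.
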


\begin{proof}
Using Eq.~\ref{eq:wavelets:lambda_jk} we can write
\begin{subequations}
\begin{align}
  \delta_{l l'} \, \delta_{m m'}
  &=
  \sum_{j=-1}^{\infty} \! \sum_{k \in \mathcal{K}_j} \Big(\sqrt{\frac{4\pi \, w_{jk}}{2l+1}} \, \kappa_l^j \, y_{lm}^*(\lambda_{jk}) \Big) \Big( \sqrt{\frac{4\pi \, w_{jk}}{2l'+1}} \, \kappa_{l'}^j \, y_{l'm'}(\lambda_{jk}) \Big)
  \\[3pt]
  &= \sum_{j=-1}^{\infty} \frac{4\pi}{\sqrt{2l+1} \sqrt{2l'+1}} \kappa_l^j \, \kappa_{l'}^j \sum_{k \in \mathcal{K}_j}  w_{jk} \, y_{lm}^*(\lambda_{jk}) \, y_{l'm'}(\lambda_{jk}) .
\end{align}
\end{subequations}
Since $\{ (w_{jk} , \lambda_{jk} ) \}_{k \in \mathcal{K}_j}$ forms a scalable reproducing kernel frame for every $\mathcal{H}_{\leq L_j}(\S2)$ the sum over $k$ equals $\delta_{l l'} \, \delta_{m m'}$ whenever the product $\kappa_l^j \kappa_{l'}^j$ is nonzero, see Eq.~\ref{eq:HleqL:scalable_frame_condition}.
Together with the Calder{\'o}n admissibility condition in Eq.~\ref{eq:calderon_condition} this implies Eq.~\ref{eq:thm:wavelets:proof:5}.
\end{proof}

\begin{figure}
  \centering
  \includegraphics[width=0.9\textwidth]{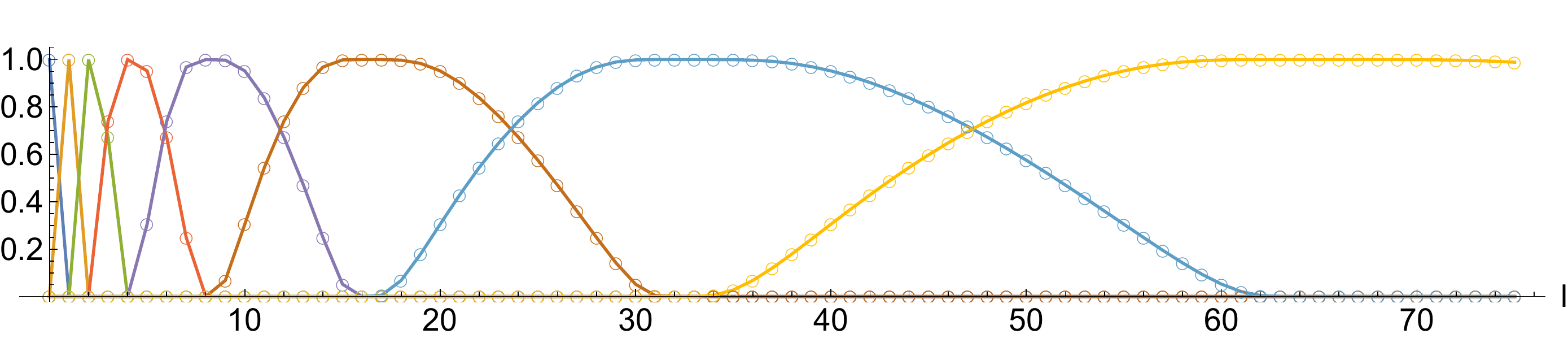}
  \caption{Windows $\kappa_j^l$ that define the scaling function on the coarsest level (with support at $l=0$) and the wavelet functions on subsequent ones. Windows are normalized by $\sqrt{2l+1 / 4 \pi}$.}
  \label{fig:kappa_j_l}
\end{figure}

\begin{proof}[Proof of Theorem~\ref{thm:wavelets}]
  Using Lemma~\ref{prop:wavelets:calderon}, the spherical harmonics representation of any $f \in L_2(\S2)$ in Eq.~\ref{eq:spherical_harmonics:expansion} can be written as
  \begin{subequations}
  \begin{align}
     f(\omega) &= \sum_{l=0}^{\infty} \sum_{m=-l}^l \sum_{l'=0}^{\infty} \sum_{m=-l'}^{l'} f_{lm} \left( \sum_{j=-1}^{\infty} \sum_{k \in \mathcal{K}_j} \psi_{lm}^{jk \, *} \, \psi_{l'm'}^{jk} \right) y_{l'm'}(\omega) .
  \end{align}
  Using linearity and re-arranging terms we obtain
  \begin{align}
    f(\omega)
    &= \Bigg\langle f(\eta) \, , \sum_{j=-1}^{\infty} \sum_{k \in \mathcal{K}_j} \sum_{l=0}^{L_j} \sum_{m=-l}^l \psi_{lm}^{jk \, *} \, y_{lm}^*(\eta) \, \sum_{l'=0}^{L_j} \sum_{m=-l}^l \psi_{l'm'}^{jk} \, y_{l'm'}(\omega) \Bigg\rangle
  \end{align}
  By the Parseval identity and the definition of the wavelet functions in Eq.~\ref{eq:wavelets:lambda_jk} this equals
  \begin{align}
    f(\omega) = \sum_{j=-1}^{\infty} \sum_{k \in \mathcal{K}_j} \big\langle f(\eta) , \psi_{jk}(\eta) \big\rangle \, \psi_{jk}(\omega) .
  \end{align}
  \end{subequations}
\end{proof}

The above theorem ensures that arbitrary signals $f \in L_2(S^2)$ can be represented using the discrete spherical wavelets in Eq.~\ref{eq:wavelets:lambda_jk} and that the representation affords many of the conveniences of an orthonormal basis, such as that primary and dual frame functions coincide and that Parseval's identity holds.

\begin{example}[Shannon wavelet]
  \label{ex:shannon_wavelet}
  The simplest example of wavelets satisfying our requirements are the spherical Shannon wavelets.
  Their scaling functions are given by
  \begin{align}
    \bar{\kappa}_l^j =
    \left\{
    \begin{array}{cc}
      1 & l < 2^{\lceil j-1/2 \rceil}
      \\[3pt]
      0 & \mathrm{otherwise}
    \end{array}
    \right.
  \end{align}
  where $\lceil \cdot \rceil$ refers to the ceiling operation.
  The wavelets are then $\smash{\kappa_l^j = \bar{\kappa}_l^{j+1} - \bar{\kappa}_l^j}$.
  As in the Euclidean case, because the windows defined by the $\smash{\bar{\kappa}_l^j}$ are not smooth in the spherical harmonics frequency domain, the wavelets suffer from a slow decay in the spatial domain.
\end{example}

\begin{example}
Window coefficients $\kappa_l^j$ that yield wavelets with fast decay in the spatial domain were constructed by McEwen, Durastani, and Wiaux~\cite{McEwen2016}.
The $\kappa_l^j$ in this case satisfy
\begin{align}
  \label{eq:supp_kappa_j_l}
  \mathrm{supp}(\kappa_l^j) = \big[ \lfloor 2^{j-1}+1 \rfloor , 2^{j+1}-1 \big] ,
\end{align}
where $\lfloor \cdot \rfloor$ refers to the flooring operation.
We thus have $L_j = 2^{j+1}-1$.
The smooth character of the $\kappa_l^j$ in $l$, cf. Fig.~\ref{fig:kappa_j_l}, ensures that the induced wavelets have quasi-exponential decay in the spatial domain~\cite{McEwen2016}.
Plots of the wavelet functions can be found in Fig.~\ref{fig:psis_spatial_profile}.
It follows from Eq.~\ref{eq:supp_kappa_j_l} that $\phi_{0}(\omega) = y_{00}(\omega)$.
For $j \geq 0$, the scaling functions again satisfy $\kappa_j^l = \bar{\kappa}_{k+1}^l - \bar{\kappa}_j^l$, see Fig.~\ref{fig:kappa_j_l_phi_psi}.
Unless mentioned otherwise, we will use the windows by McEwen and co-workers~\cite{McEwen2016} in the remainder of the paper.
\end{example}

\begin{figure}
  \includegraphics[width=\textwidth]{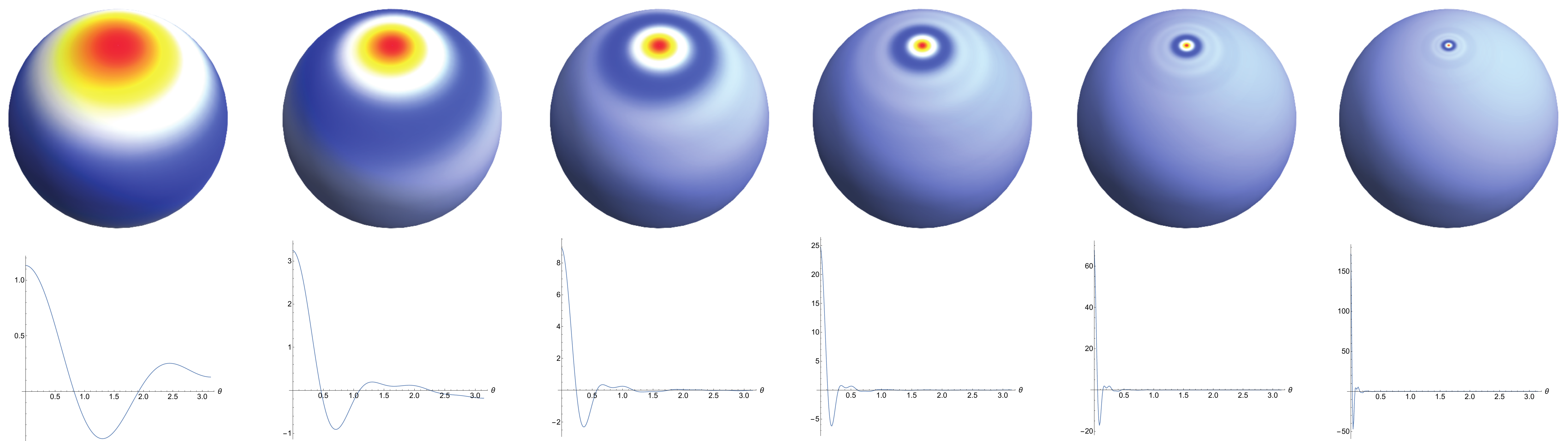}
  \caption{Wavelets $\psi_{j0}(\theta,\phi)$ and their profiles $\psi_{j0}(\theta,0)$ for $j=1$ to $j=6$ based on the windows from~\cite{McEwen2016}. The direct comparison to Fig.~\ref{fig:lambdas_weights} shows the correspondence between the effective support of the wavelets and the density of the point set over which they are supported.}
  \label{fig:psis_spatial_profile}
\end{figure}

\begin{remark}[Generalized multi-resolution structure]
The wavelet frame in Theorem~\ref{thm:wavelets} does not form a classical multi-resolution analysis~\cite{Mallat1989,Meyer1986}.
However, with $\kappa_j^l = \bar{\kappa}_{j+1}^l - \bar{\kappa_j}^l$, as in the two examples above, it is a generalized one~\cite{Merrill2018} in the sense of Baggett, Carey, Moran, and Ohring~\cite{Baggett1995}.
For our purposes, the essential difference is that in a generalized multi-resolution analysis there is no longer the requirement that the scaling functions form a Riesz basis for the multi-resolution spaces $V_j$.
This becomes necessary in our case because of the, in general, smooth decay of the wavelets in the spherical harmonics domain, cf. Fig.~\ref{fig:kappa_j_l_phi_psi}, while using bandlimitedness for discretization.
This causes the scaling functions to not be in the multi-resolution spaces $V_j = \mathcal{H}_{\leq L_{j-1}}(\S2)$.
The situation is in this respect analogous to those for pyramid schemes in Euclidean space~\cite{Unser2013}.

With $D_2$ being dyadic dilation, we have the following definition for a generalized, dyadic multi-resolution analysis on $S^2$:
\vspace{0.4em}
\begin{enumerate}[i.)]
  \setlength\itemsep{0.2em}
  \item $V_{j}(S^2) \subseteq V_{j+1}(S^2)$;
  \item $L_2(S^2) = \mathrm{Closure}\Big( \bigcup_{j=0}^{\infty} V_{j}(S^2) \Big)$ and $\bigcap V_j = \{ 0 \}$;
  \item $D_2(V_j) = V_{j+1}$;
  \item $V_0$ is invariant under the action of $\mathrm{SO}(3)$ .
\end{enumerate}
\vspace{0.4em}
Since $\kappa_j^l = \bar{\kappa}_{j+1}^l - \bar{\kappa}_j^l$, by the Calder{\'o}n condition in Eq.~\ref{eq:calderon_condition} the $\bar{\kappa}_j^l$ are unity for $l < L_{j-1}$.
The multi-resolution spaces $V_j(S^2)$ are thus $V_j(S^2) = \mathcal{H}_{L_{j-1}}(S^2)$ while the bandlimit of the scaling functions is $L_j$, so that they are not themselves in the spaces.
With this, properties i.), ii.) and iv.) above are easily verified and for iii.) one can exploit that the associated Legendre functions $P_{lm}(\cos{\theta})$ have finite Fourier series representations with bandlimit $l$.
The Shannon wavelets in Example~\ref{ex:shannon_wavelet} are, in fact, contained in the spaces $V_j$ but they fail to satisfy the dyadic translation condition required in the classical ones.
The above definition of a generalized multi-resolution analysis is also consistent with other definitions of multi-resolution structures for $S^2$ that have been proposed in the literature~\cite{Potts1996,Freeden2018}.

The generalized multi-resolution structure can also be understood from the point of view of sampling theory.
Our scaling functions are, in general, smoothed versions of the ideal sampling function, i.e. the Shannon scaling function in Example~\ref{ex:shannon_wavelet}.
The smooth decay of the defining filter taps $\kappa_l^j$ in $l$ leads to better spatial localization while still verifying the reproducing property for functions $f_j \in V_j(S^2)$,
\begin{align}
  \big\langle f_j(\omega) \, , \, \phi_{jk}(\omega) \big\rangle = f(\lambda_{jk}) .
\end{align}
This is in full analogy to the Euclidean case.
There one can also construct generalizations of the Shannon sampling expansion with reconstruction kernels with better spatial localization by defining them with a smooth tail in the frequency domain, see e.g.~\cite{Strohmer2005}.
\end{remark}

\begin{figure}
  \centering
  \includegraphics[width=0.8\textwidth]{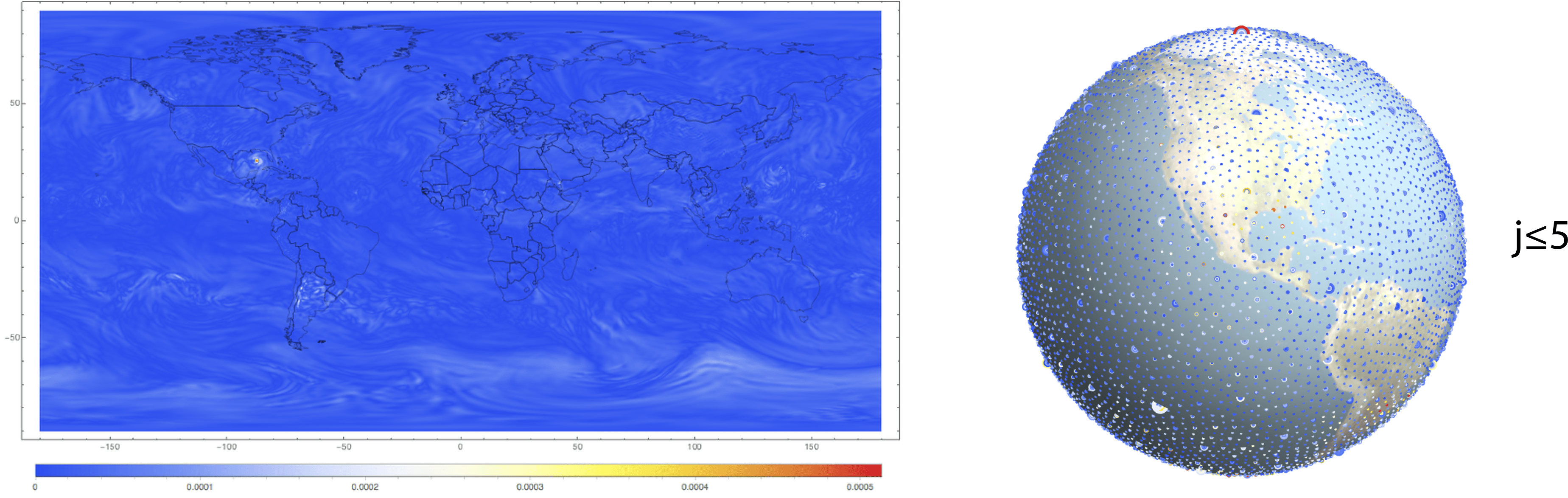}
  \caption{\emph{Left:} Visualization of potential vorticity (PV) for 29/08/2005, the day hurricane Katrina made landfall at the Gulf coast of the USA.
    \emph{Right:} Superposition of wavelet coefficients for all levels, with the level encoded in the size of the point and the magnitude of the coefficients through a temperature map, with blue corresponding to low values and red to large ones.
    Individual levels are shown in Fig.~\ref{fig:katrina_panel:1}.
    }
  \label{fig:katrina_panel:1}
\end{figure}

\begin{remark}[Connection to needlets]
\label{remark:wavelets:discrete:needlets}
A construction closely related to ours are the needlets by Narcowich, Petrushev, and Ward~\cite{Narcowich2006a}.
For these, first a scale discrete but in the spatial domain continuous wavelet frame is introduced and this is then discretized using a quadrature rule.
More precisely, Narcowich, Petrushev, and Ward observe that for the scale discrete expansion
\begin{align}
  \label{eq:needlets:semi_discrete}
  f(\omega)
  = \sum_{j=-1}^{\infty} \int_{S^2} \big\langle f(\xi) , \psi_j(\xi,\eta) \big\rangle_{\xi} \, \psi_j(\eta,\omega) \, d\eta
  = \sum_{j=-1}^{\infty} \int_{S^2} \bar{f}_j^{\psi}(\eta) \, \psi_j(\eta,\omega) \, d\eta
\end{align}
the coordinate function $\smash{\bar{f}_j^{\psi}(\eta)}$ as well as the $\smash{\psi_j(\eta,\omega)}$ are both $L_{j+1}$-bandlimited when this also holds for the window coefficients $\smash{\kappa_{l}^j}$.
Their product is thus in $\smash{\mathcal{H}_{L_{j+2}}(\S2)}$ and the reconstruction integral can be implemented with a finite quadrature rule for the space.
The existence of the quadratures is proved by the authors.
This provides the discrete needlet frame.
To the best of our knowledge, however, there exists no constructive algorithm to obtain the nodes and weights of the quadrature rules except for nonlinear optimization.

\begin{figure}
  \centering
  \includegraphics[width=0.95\textwidth]{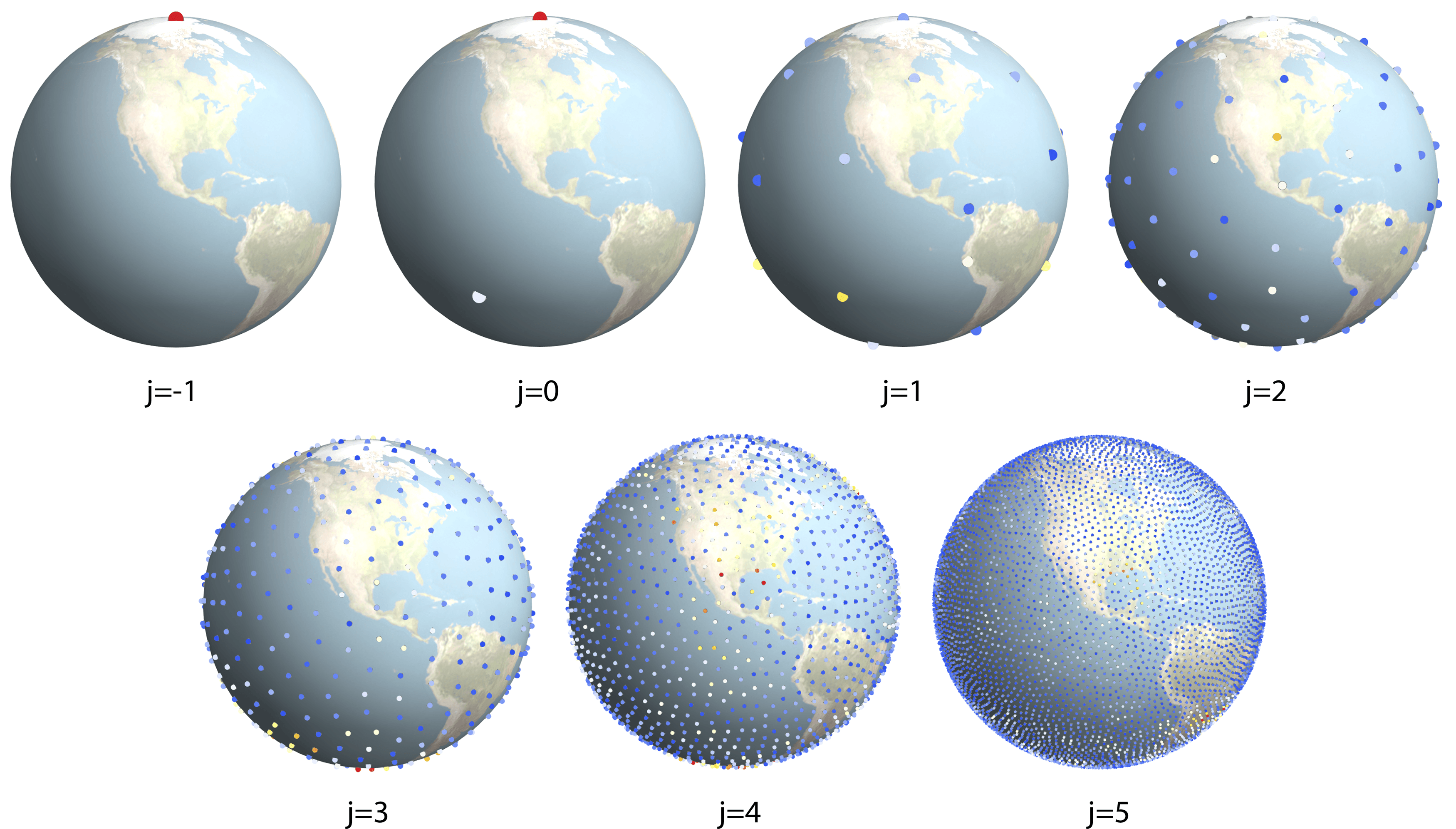}
  \caption{Visualization of wavelet coefficients for potential vorticity (PV) for 29/08/2005, the day hurricane Katrina made landfall at the gulf coast of the USA, see Fig.~\ref{fig:katrina_panel:1} for the spatial potential vorticity field.
  The coefficients are plotted at the locations of the basis functions with the magnitude encoded with a temperature map, with blue corresponding to low values and red to large ones.}
  \label{fig:katrina_panel:2}
\end{figure}

As an alternative to the quadrature rule in the original construction of needlets, the $L_j$-bandlimited function $\bar{f}_j^{\psi}(\eta)$ can be written in a scalable reproducing kernel frame, i.e.
\begin{align}
  f_j^{\psi}(\eta) = \sum_{\lambda_{jk} \in \Lambda_j} f_j^{\psi}(\lambda_k) \, w_k \, k_{\lambda_k}(\eta) .
\end{align}
Inserting into Eq.~\ref{eq:needlets:semi_discrete} then also yields a fully discrete wavelet frame, and this one is identical to those in Theorem~\ref{thm:wavelets}.
An alternative proof of Theorem~\ref{thm:wavelets} could hence proceeds in this way, and this is, in fact, the original one we developed.

Needlets and our construction are in most respects equivalent.
One aspect there this is not obvious is the redundancy of the resulting wavelet representations.
For our construction with scalable reproducing kernel frames of redundancy $2$, the number of wavelets on each levels is $\mathcal{O}( 2 L_{j+1}^2 )$.
With spherical $t$-designs, which provide the optimal discretization for needlets, numerical experiments suggest that their minimal cardinality is $1/2 (t^2 + 1)$, e.g.~\cite{Hardin1996,Sloan2009,Graf2011a}.
Hence, also for needlets one has $\mathcal{O}( 2 L_{j+1}^2)$ wavelets per level.
Both constructions thus also yield the same redundancy.
This suggests that spherical $t$-designs for $\mathcal{H}_{\leq 2 L}(\S2)$ correspond to scalable reproducing kernel frames for $\mathcal{H}_{\leq L}(\S2)$ and we verified this numerically for some examples from~\cite{Womersley2017}.
This is also supported by our experimental observation that for large $L$ a redundancy of $2$ is required to obtain scalable reproducing kernel frames and that non-redundant representations are only possible with $L \leq 1$, analogous to the fact that tight spherical designs only exist for $L = 1,2,3,5$.
We believe that the connection between scalable reproducing kernel bases and spherical $t$-designs deserves further attention in future work.
\end{remark}

\begin{remark}
The wavelets defined in Eq.~\ref{eq:wavelets:lambda_jk} are isotropic.
However, because of the separability of the longitudinal component at the North Pole, the construction is naturally extended to anisotropic ones by using window coefficients $\smash{\kappa_{lm}^{j,t} = \kappa_{l}^j \, \beta_{m}^{j,t}}$, where $t$ is a directional orientation parameter in $T S^2$.
The required $\smash{\beta_{m}^{j,t}}$ are available in the literature on steerable and polar wavelets, e.g.~\cite{Unser2010} and~\cite{Lessig2018z}, and allow for flexible angular localization.
In contrast to related constructions that have appeared previously~\cite{Starck2006,McEwen2015,Chan2015}, these anisotropic spherical wavelets thereby still form a discrete Parseval tight frame, although with a larger redundancy that in the isotropic case.
In analogy to the situation in the plane~\cite{Unser2010,Lessig2018z}, a suitable choice of the $\smash{\beta_{m}^{j,t}}$ yields ridgelet- and curvelet-like wavelets~\cite{Candes1999a,Candes2005b,Do2003,Labate2005}.
These could be beneficial for the analysis of highly anisotropic features like global circulation patterns.
Improved sparsity could, however, only be obtained with an anisotropic grid of locations whose construction is currently unclear to us.
To our knowledge, there are currently also no approximation-theoretic results for anisotropic wavelets on the sphere.
We leave a thorough investigation of the directional case thus to future work.
\end{remark}

\subsection{Numerical Example}
\label{eq:wavelets:experiments}

To exemplify the practical properties of the wave\-lets introduced above, we projected the potential vorticity (PV) of 29/08/2005, the day hurricane Katrina hit the Gulf coast of the USA, into the frame.
The original spatial vorticity field (from the ERA5 reanalysis data set~\cite{ERA5}) is shown on the left in Fig.~\ref{fig:katrina_panel:1} and a visualization of the superposition of the wavelet coefficients on all levels on the right.
Per level coefficients are presented in Fig.~\ref{fig:katrina_panel:2}.

In the plots, hurricane Katrina is clearly discernible in coefficient space, demonstrating the spatial locality that is provided by the wavelets.
At the same time, each level still has a clear frequency localization, cf. Fig.~\ref{fig:kappa_j_l}.
The large coefficient at the North Pole for coarse levels is the wavelet representation of the polar vortex.
Further numerical results will be presented in Sec.~\ref{sec:shallow}.

%
%


\section{A Local Spectral Exterior Calculus for $S^2$}
\label{sec:forms}

In the present section, we introduce spherical differential form wavelet $\psi_{jk}^{\fdeg,\nu}$ and the local spectral exterior calculus $\Psiec$ defined on them.
Towards this end, we will first discuss a spectral exterior calculus for $S^2$ that, together with the scalar spherical wavelets introduced in the last section, provides the basis for the differential form wavelets.
We will begin by recalling some basic facts about exterior calculus and fixing notation.
For a thorough introduction we refer to the literature, e.g.~\cite{Marsden2004} and~\cite{Frankel2011}.

\subsection{Notation}
\label{sec:forms:notation}

A differential $\fdeg$-form is a covariant, anti-symmmetric tensor of rank $\fdeg$.
Geometrically, it can be understood as an object that is naturally integrated over an $\fdeg$-dimensional (sub-)manifold~\cite{Frankel2011}.
The spaces of differential $\fdeg$-forms on $S^2$ will be denoted as $\Omega^{\fdeg}(\S2)$ with $\Omega^0(\S2) \cong \mathcal{F}(\S2)$, i.e. the space of functions, and $\Omega^{\fdeg}(\S2) = \varnothing$ for $\fdeg < 0$ and $\fdeg > 2$.
The coordinate expressions for differential forms on $\S2$ in latitude-longitude $(\theta,\phi)$-coordinates are
\begin{subequations}
\begin{align}
  f(\omega) \in \, \, & \Omega^0(\S2) \cong \mathcal{F}(\S2)
  \\[3pt]
  \alpha(\omega) = \alpha_{\theta}(\omega) \, \dd \theta + \alpha_{\phi}(\omega) \, \dd \phi \in \, \, & \Omega^1(\S2)
  \\[3pt]
  \gamma(\omega) = \gamma_{\theta,\phi}(\omega) \, \dd \theta \wedge \dd \phi \in \ \, & \Omega^2(\S2) .
\end{align}
\end{subequations}
The form basis functions $\mathrm{d}\theta$, $\mathrm{d}\phi$ are the biorthogonal duals to the vector basis functions $\partial / \partial \theta$ and $\partial / \partial \phi$ induced by the coordinate chart (note that we will work with the unnormalized basis functions).
The multiplication on differential forms is the anti-symmetric wedge product $\wedge : \Omega^{\fdeg}(\S2) \times \Omega^l(\S2) \to \Omega^{{\fdeg}+l}(\S2)$, which can be seen as a generalization of the cross product in $\R^3$, and it turns the spaces $\Omega^{\fdeg}(\S2)$ into a graded algebra.
The exterior derivative $\dd : \Omega^{\fdeg}(\S2) \to \Omega^{\fdeg+1}(\S2)$ is the natural derivation acting on differential forms satisfying, among other things, a Leibniz rule and $\dd \cdot \dd = 0$.
It is also the covariant form of the usual differential operators gradient, curl, and divergence.
A differential form $\alpha$ with $\dd \alpha = 0$ is said to be closed and if $\alpha = \dd \beta$ for some $\beta$ then it is exact.

The foregoing concepts are metric-independent.
Using the Riemannian structure on $\S2$ induced by $\R^3$
we can introduce the Hodge dual $\star : \Omega^{\fdeg}(\S2) \to \Omega^{n-{\fdeg}}(\S2)$.
In coordinates it is
\begin{subequations}
\begin{align}
  \star f &= f \, \dd \theta \wedge \dd \phi
  \\[3pt]
  \star \big( \alpha_\theta \, \dd \theta + \alpha_{\phi} \, \dd \phi \big) &= -\alpha_\phi \, \dd \theta + \alpha_{\theta} \, \dd \phi
  \\[3pt]
  \star \big( \beta_{\theta,\phi} \, \dd \theta \wedge \dd \phi \big) &= \beta_{\theta,\phi} .
\end{align}
\end{subequations}
The Hodge dual induces an $L_2$ inner product on $\Omega^{\fdeg}(\S2)$ by
\begin{align}
  \label{eq:forms:inner_product}
  \langle \! \langle \alpha , \beta \rangle \! \rangle = \int_{\S2} \alpha \wedge \star \beta
\end{align}
with the integral on the right hand side being well defined since $\alpha \wedge \star \beta \in \Omega^2(\S2)$.
The adjoint of the exterior derivative under the above inner product is the co-differential $\delta : \Omega^{\fdeg+1}(\S2) \to \Omega^{\fdeg}(\S2)$, i.e. $\llangle \dd \alpha \, , \, \beta \rrangle = \llangle \alpha \, , \, \delta \beta \rrangle$.
It can also be expressed using the Hodge dual as $\delta = \star \, \dd \, \star$.
A differential form $\alpha$ with $\delta \alpha = 0$ is said to be co-closed and if $\alpha = \delta \beta$ for some $\beta$ then it is co-exact.

We will frequently make use of the Hodge-Helmholtz decomposition that splits $\Omega^{\fdeg}(\S2)$ into three orthogonal parts
\begin{align}
  \label{eq:hodge_helmholtz}
  \Omega^{\fdeg}(\S2) = \Omega_{\dd}^{\fdeg}(\S2) \, \bigoplus \, \Omega_{\delta}^{\fdeg}(\S2) \, \bigoplus \, \Omega_{h}^{\fdeg}(S^2)
\end{align}
where $\Omega_{\dd}^{\fdeg}(\S2)$ is the space of exact $\fdeg$-forms, $\Omega_{\delta}^{\fdeg}(\S2)$ those of co-exact ones, and $\Omega_{h}^{\fdeg}(S^2)$ are the harmonic forms, i.e. those in the kernel of the Laplace-Beltrami operator $\Delta = \delta \, \dd + \dd \, \delta$.
For $\fdeg=0$ and $\fdeg=2$, the harmonic forms are exactly the constants and $\Omega_{h}^{1}(S^2) = \emptyset$.
Any $\gamma \in \Omega^{\fdeg}(\S2)$ can thus be written as $\gamma = \dd \alpha + \delta \beta + \zeta$ where $\zeta$ is harmonic.
One of the important properties of Eq.~\ref{eq:hodge_helmholtz} is that it characterizes the domain and image of the exterior derivative $\dd$.
With the Hodge-Helmholtz decomposition, the Hodge dual can also be characterized more precisely as $\star : \Omega_{\nu}^{\fdeg}(\S2) \to \Omega_{\bar{\nu}}^{n-\fdeg}(\S2)$ where $\nu \in \{ \dd  , \delta , h \}$ and $\bar{\dd} = \delta$, $\bar{\delta} = \dd$, and $\bar{h} = h$.

The metric also allows one to introduce the musical isomorphisms that identify vector fields and $1$-forms,
\begin{subequations}
\begin{align}
  & v^{\flat} \in \Omega^1(\S2) \ , \ v \in \mathfrak{X}(\S2)
  \\[3pt]
  & \alpha^{\sharp} \in \mathfrak{X}(\S2) \ , \, \alpha \in  \Omega^1(\S2) ;
\end{align}
\end{subequations}
in coordinates, these correspond to lowering and raising indices, respectively.
Using flat and sharp, we can, for example, identify the Hodge-Helmholtz decomposition with the classical Helmholtz decomposition for vector fields and relate the exterior derivative to the classical differential operators of vector calculus.

\subsection{A Spectral Exterior Calculus for $\S2$}
\label{sec:forms:spectral}

In this section, we introduce a spectral exterior calculus for the sphere.
It can be found in various disguises in the literature, e.g. in the form of vector spherical harmonics.
An explicit formulation based on the de Rahm complex will provide us with a foundation for the construction of differential form wavelets in Sec.~\ref{sec:forms:psi}.

For the spectral exterior calculus we will systematically distinguish between exact, co-exact and harmonic forms, i.e. respect the Hodge-Helmholtz decomposition in Eq.~\ref{eq:hodge_helmholtz}.
We will do so by constructing distinct orthonormal bases, formed by the spectral differential form basis functions $y_{lm}^{\fdeg,\nu}(\omega)$, for the spaces
\begin{align}
  L_2(\Omega_{\nu}^{\fdeg}, \S2) = \spann_{\substack{-l \leq m \leq l \\ l \geq 0}}{ \big\{ y_{lm} ^{\fdeg,\nu}(\omega) \big\}} \, , \quad \nu \in \{ \dd , \delta, h \} .
\end{align}
We start with differential $0$-forms in $L_2(S^2) \cong L_2(\Omega_{\delta}^0,\S2) \bigoplus L_2(\Omega_{h}^0,\S2)$.
It is immediately apparent that in this case the $y_{lm} ^{\fdeg,\nu}(\omega)$ are given by
\begin{align}
  \label{eq:spectral_forms:0_forms}
    y_{lm}^{0,\delta}(\omega) =
  \left\{ \begin{array}{cc}
    0 & l = 0
    \\[3pt]
    y_{lm}(\omega) & \textrm{otherwise}
  \end{array} \right.
  \quad \quad \quad
  y_{lm}^{0,h}(\omega) =
  \left\{ \begin{array}{cc}
    y_{00}(\omega) & l = 0
    \\[3pt]
    0 & l \geq 0
  \end{array} \right.
\end{align}
where the $y_{lm}(\omega)$ are the usual scalar spherical harmonics.

The harmonic forms spanned by $y_{00}^{0,h}(\omega)$ are in the kernel of the exterior derivative $\dd$.
However, the image of $\Omega_{\delta}^0(S^2)$ under $\dd$ is precisely the space $\Omega_{\dd}^1(S^2)$ of exact $1$-forms.
By linearity, a basis for $\Omega_{\dd}^1(S^2)$ is thus obtained by taking the exterior derivative of the $y_{lm}^{0,\delta}(\omega)$.
We can define the $y_{lm}^{1,\dd}(\omega)$ thus by
\begin{subequations}
  \label{eq:spectral_forms:1_forms}
\begin{align}
  \dd y_{lm}^{0,\delta}(\omega) &= \sqrt{l(l+1)} \, y_{lm}^{1,\dd}(\omega) .
  \intertext{Through the Hodge dual, we can also obtain the basis forms $y_{lm}^{1,\delta}(\omega)$ for co-exact $1$-forms,}
  \star \, \dd y_{lm}^{0,\delta}(\omega) &= \sqrt{l(l+1)} \, y_{lm}^{1,\delta}(\omega) .
\end{align}
\end{subequations}
The $y_{lm}^{1,\nu}(\omega)$ defined by Eq.~\ref{eq:spectral_forms:1_forms} are covariant, normalized versions of the classical vector spherical harmonics $\vec{y}_{lm}^{1,\nu}(\omega) = (y_{lm}^{1,\nu}(\omega))^{\sharp}$.
It is well known, e.g.~\cite[Ch. 5.3]{Freeden1998}, that the $\vec{y}_{lm}^{1,\dd}(\omega)$ and $\vec{y}_{lm}^{1,\delta}(\omega)$ form together a complete basis for the space of $L_2$ vector fields on $\S2$.
By the musical isomorphisms, this implies that the $y_{lm}^{1,\dd}(\omega)$ and $y_{lm}^{1,\delta}(\omega)$ form an orthonormal basis for differential $1$-forms with $ y_{lm}^{1,\dd}(\omega)$ spanning the space of exact $1$-forms and $y_{lm}^{1,\delta}(\omega)$ those of co-exact ones.

We complete the spectral differential form basis functions by defining the $y_{lm}^{2,\nu}$ for $2$-forms as
\begin{subequations}
  \label{eq:spectral_forms:2_forms}
\begin{align}
  \dd \star \dd y_{lm}^0(\omega) &= l(l+1) \, y_{lm}^{2,\dd}(\omega) = \star \, y_{lm}^{0,\delta}(\omega)
  \\[4pt]
  \star y_{lm}^{0,h}(\omega) &= y_{lm}^{2,h}(\omega) ,
\end{align}
\end{subequations}
i.e. $y_{lm}^{2,\dd}(\omega) = y_{lm} \dd \theta \wedge \dd \phi$.
It follows from Eq.~\ref{eq:spectral_forms:0_forms} and the fact that the Hodge dual provides an isomorphism that the $y_{lm}^{2,\dd}(\omega)$ together with the $y_{lm}^{2,h}(\omega)$ form an orthonormal basis for $L_2(\Omega^2,\S2)$.
We summarize the foregoing construction in the following theorem.

\begin{theorem}
  The spectral differential form basis functions $y_{lm}^{\fdeg,\nu}(\omega)$ defined in Eqs.~\ref{eq:spectral_forms:0_forms}, \ref{eq:spectral_forms:1_forms},   \ref{eq:spectral_forms:2_forms} provide orthonormal bases for the spaces $L_2(\Omega_{\nu}^{r},S^2)$.
\end{theorem}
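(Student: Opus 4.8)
The plan is to treat the three form-degrees $\fdeg=0,1,2$ in turn, and for each degree to establish (i) orthonormality of every family $\{y_{lm}^{\fdeg,\nu}(\omega)\}$ together with mutual orthogonality of the families belonging to different types $\nu$, (ii) completeness of the union over $\nu$ in $L_2(\Omega^\fdeg,\S2)$, and (iii) that each family in fact exhausts the corresponding Hodge--Helmholtz summand $\Omega_\nu^\fdeg(\S2)$. Degree $\fdeg=0$ is essentially a re-reading of the spherical harmonics expansion, degree $\fdeg=2$ is pulled back from it through the Hodge star, and the substantive case is $\fdeg=1$.

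\emph{Degrees $\fdeg=0$ and $\fdeg=2$.} For $\fdeg=0$ the functions are the scalar harmonics $y_{lm}$ with $l\ge1$ (type $\delta$) and $y_{00}$ (type $h$), so orthonormality and completeness in $L_2(\S2)$ are immediate from Eq.~\ref{eq:spherical_harmonics:expansion}. It then suffices to note that $y_{00}$ spans the constants, which are exactly $\Omega_h^0(\S2)=\ker(\dd\colon\Omega^0\to\Omega^1)$, and that its orthogonal complement is the $L_2$-closure of $\delta\Omega^1(\S2)=\Omega_\delta^0(\S2)$, since on $0$-forms $\Delta=\delta\dd$ restricts to an invertible operator on mean-zero functions with eigenbasis $\{y_{lm}\}_{l\ge1}$. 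For $\fdeg=2$ the Hodge star $\star\colon\Omega^0(\S2)\to\Omega^2(\S2)$ is a (signed) isometry for the inner product in Eq.~\ref{eq:forms:inner_product} — a consequence of $\star\star=\pm\mathrm{id}$ and the antisymmetry of $\wedge$ — and by the refined mapping property $\star\colon\Omega_\nu^\fdeg\to\Omega_{\bar\nu}^{2-\fdeg}$ it carries $\Omega_\delta^0$ onto $\Omega_\dd^2$ and $\Omega_h^0$ onto $\Omega_h^2$. Applying $\star$ to the $\fdeg=0$ bases therefore produces orthonormal bases $\{y_{lm}^{2,\dd}\}_{l\ge1}$ and $\{y_{00}^{2,h}\}$; since there are no $3$-forms, $\Omega_\delta^2(\S2)=\varnothing$ and these two families are already complete in $L_2(\Omega^2,\S2)$.

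\emph{Degree $\fdeg=1$.} Using $\delta\equiv0$ on $0$-forms, the adjointness $\llangle\dd\alpha,\beta\rrangle=\llangle\alpha,\delta\beta\rrangle$, and the Laplace--Beltrami eigenrelation $\Delta y_{lm}^{0,\delta}=l(l+1)\,y_{lm}^{0,\delta}$, one computes
\begin{align}
  \llangle \dd y_{lm}^{0,\delta},\,\dd y_{l'm'}^{0,\delta}\rrangle
  &= \llangle y_{lm}^{0,\delta},\,\Delta y_{l'm'}^{0,\delta}\rrangle
  = l'(l'+1)\,\delta_{ll'}\,\delta_{mm'},
\end{align}
which after division by $\sqrt{l(l+1)}\sqrt{l'(l'+1)}$ yields orthonormality of $\{y_{lm}^{1,\dd}\}_{l\ge1}$; since $\star$ is an inner-product isometry, the same identity gives orthonormality of $\{y_{lm}^{1,\delta}\}_{l\ge1}$. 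For the mutual orthogonality I would note that $y_{lm}^{1,\delta}=\star\dd y_{lm}^{0,\delta}$ is \emph{co-exact}: because $\delta=\star\dd\star$ on $2$-forms and $\star\star=\mathrm{id}$ on $0$-forms, $\star\dd y_{lm}^{0,\delta}=\delta\big(\star y_{lm}^{0,\delta}\big)\in\Omega_\delta^1(\S2)$, whence $\llangle y_{lm}^{1,\dd},y_{l'm'}^{1,\delta}\rrangle$ is proportional to $\llangle\dd(\cdot),\delta(\cdot)\rrangle=\llangle\dd\dd(\cdot),\cdot\rrangle=0$. Completeness of $\{y_{lm}^{1,\dd}\}\cup\{y_{lm}^{1,\delta}\}$ in $L_2(\Omega^1,\S2)$ I would take from the classical completeness of the vector spherical harmonics $\vec y_{lm}^{\,1,\dd},\vec y_{lm}^{\,1,\delta}$ for $L_2$ vector fields on $\S2$~\cite[Ch.~5.3]{Freeden1998}, transported along the musical isomorphism $\sharp$, which is an isometry between $L_2(\Omega^1,\S2)$ and $L_2$ vector fields. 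Finally $\{y_{lm}^{1,\dd}\}_{l\ge1}$ spans $\dd\,\overline{\Omega_\delta^0(\S2)}=\Omega_\dd^1(\S2)$ by the definition of exactness, and since $\Omega_h^1(\S2)=\varnothing$ the remaining family must span $\Omega_\delta^1(\S2)$.

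\textbf{Main obstacle.} The only genuinely non-elementary ingredient is the completeness of the vector spherical harmonics for $L_2$ vector fields on $\S2$, which I would cite rather than reprove. Everything else is Hodge-theoretic bookkeeping; the points that require care are (a) matching the $L_2$-completions of the smooth Hodge--Helmholtz summands $\Omega_\nu^\fdeg(\S2)$ with the spectral spans defining $L_2(\Omega_\nu^\fdeg,\S2)$, (b) checking that $\dd$ and $\star$ restrict and co-restrict correctly between those completions (density of smooth forms, boundedness of $\dd$), and (c) keeping the eigenvalue normalization $\Delta y_{lm}=l(l+1)y_{lm}$ consistent with the $\sqrt{l(l+1)}$ factors built into Eqs.~\ref{eq:spectral_forms:1_forms}--\ref{eq:spectral_forms:2_forms}.
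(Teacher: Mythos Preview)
Your proposal is correct and follows essentially the same route as the paper: the theorem is stated there as a summary of the discussion preceding it, which handles $\fdeg=0$ by re-reading the scalar spherical harmonics expansion, $\fdeg=1$ by invoking the completeness of vector spherical harmonics \cite[Ch.~5.3]{Freeden1998} and transporting through the musical isomorphism, and $\fdeg=2$ via the Hodge-star isometry from $\fdeg=0$. Your write-up is in fact more explicit than the paper's, particularly in computing the $1$-form orthonormality from $\llangle\dd y_{lm},\dd y_{l'm'}\rrangle=\llangle y_{lm},\Delta y_{l'm'}\rrangle$ and in verifying the mutual orthogonality of the exact and co-exact families via $\dd\dd=0$.
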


In contrast to most examples for numerical differential forms in the literature, e.g.~\cite{Desbrun2006,Arnold2018}, the $y_{lm}^{\fdeg,\nu}(\omega)$ above are forms in the sense of the continuous theory.
Thus, all operations available on them are also well defined for the $y_{lm}^{\fdeg,\nu}(\omega)$.
The question becomes, therefore, if the operations can be computed efficiently in numerical calculations.
We collect important results in this regard in the following theorem.

\begin{theorem}
  \label{prop:spectral_ec:properties}
  Let the $y_{lm}^{\fdeg,\nu}(\omega)$ be the spectral differential form basis function for $\S2$ defined in Eqs.~\ref{eq:spectral_forms:0_forms},~\ref{eq:spectral_forms:1_forms},~\ref{eq:spectral_forms:2_forms} and let $\vert \dd \vert = 0$,  $\vert \dd \vert = 1$,  $\vert h \vert = 0$.
   Then
   \vspace{0.8em}
  \begin{enumerate}[i.)]
    \setlength\itemsep{0.8em}

    \item Closure of exterior derivative: $\dd y_{lm}^{\fdeg,\delta} = \sqrt{l(l+1)} \, y_{lm}^{\fdeg+1,\dd}$, $\, \dd y_{lm}^{\fdeg,\dd} = 0$

    \item Closure of Hodge dual: $(-1)^{\fdeg \, \vert \nu \vert}$ $\star y_{lm}^{\fdeg,\nu} = y_{lm}^{2-\fdeg,\bar{\nu}}$

    \item Closure of co-differential: $\delta y_{lm}^{\fdeg,\dd} = \sqrt{l(l+1)} \, y_{lm}^{\fdeg-1,\delta}$

    \item Eigen-forms of Laplace-Beltrami operator: $\Delta y_{lm}^{\fdeg,\nu} = l(l+1) \, y_{lm}^{\fdeg,\nu}$

    \item Double-graded algebra structure: $y_{l_1 m_1}^{\fdeg_1,\nu} \wedge y_{l_2 m_2}^{\fdeg_2,\nu} \in \mathcal{H}_{l_1 + l_2}(\Omega^{\fdeg_1 + \fdeg_2})$

  \end{enumerate}
\end{theorem}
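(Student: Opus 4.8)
The plan is to reduce all five statements to two inputs: the classical fact --- already invoked in defining $y_{lm}^{0,\delta}$ --- that the scalar spherical harmonics are eigenfunctions of the Laplace--Beltrami operator, $\Delta y_{lm}^{0,\delta} = l(l+1)\, y_{lm}^{0,\delta}$ (with $\Delta = \delta\,\dd$ on $0$-forms and $\Delta\, y_{00}^{0,h} = 0$); and the elementary identities of Cartan's calculus, namely $\dd\circ\dd = 0$, $\star\star = (-1)^{\fdeg(2-\fdeg)}$ on $\Omega^{\fdeg}(\S2)$, $\delta = -\,\star\,\dd\,\star$ up to the sign convention fixed at the start, $\Delta = \dd\,\delta + \delta\,\dd$, the commutation of $\Delta$ with $\dd$, $\delta$ and $\star$, and the graded Leibniz rule for $\dd$ over $\wedge$. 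Since Eqs.~\ref{eq:spectral_forms:1_forms} and~\ref{eq:spectral_forms:2_forms} build every $y_{lm}^{\fdeg,\nu}$ with $\fdeg\geq 1$ out of $y_{lm}^{0,\delta}$ and $y_{lm}^{0,h}$ by applying $\dd$ and $\star$, each claim becomes a short manipulation on the finitely many admissible pairs $(\fdeg,\nu)$.

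For (i), the case $\fdeg=0$ is the defining relation in Eq.~\ref{eq:spectral_forms:1_forms}; for $\fdeg=1$ I would apply $\dd$ to $\sqrt{l(l+1)}\, y_{lm}^{1,\delta} = \star\,\dd y_{lm}^{0,\delta}$ and compare with the definition of $y_{lm}^{2,\dd}$ via $\dd\,\star\,\dd y_{lm}^{0,\delta}$ in Eq.~\ref{eq:spectral_forms:2_forms}, which produces the factor $l(l+1)/\sqrt{l(l+1)} = \sqrt{l(l+1)}$; and $\dd y_{lm}^{\fdeg,\dd} = 0$ follows from $y_{lm}^{1,\dd}\propto \dd y_{lm}^{0,\delta}$ together with $\dd\circ\dd = 0$, and trivially for $\fdeg=2$ since $\Omega^3(\S2)=\varnothing$. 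Claim (iv) is then immediate: $\Delta$ commutes with $\dd$ and $\star$, so the eigenvalue $l(l+1)$ of $y_{lm}^{0,\delta}$ (resp. the eigenvalue $0$ of $y_{00}^{0,h}$) is inherited by $\dd y_{lm}^{0,\delta}$, $\star\,\dd y_{lm}^{0,\delta}$ and $\star y_{lm}^{0,\nu}$, i.e. by all $y_{lm}^{\fdeg,\nu}$. Claim (iii) follows from (i) and (iv): writing $y_{lm}^{\fdeg,\dd} = (l(l+1))^{-1/2}\,\dd y_{lm}^{\fdeg-1,\delta}$ and noting that $y_{lm}^{\fdeg-1,\delta}$ is co-closed (it is co-exact, being proportional to $\star\,\dd$ of a lower-degree basis form), one obtains $\delta y_{lm}^{\fdeg,\dd} = (l(l+1))^{-1/2}\,\delta\,\dd\, y_{lm}^{\fdeg-1,\delta} = (l(l+1))^{-1/2}\,\Delta y_{lm}^{\fdeg-1,\delta} = \sqrt{l(l+1)}\, y_{lm}^{\fdeg-1,\delta}$; alternatively, $\delta$ is the $\llangle\cdot,\cdot\rrangle$-adjoint of $\dd$, so in the orthonormal bases its matrix is the transpose of the one found in (i).

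Claim (ii) I would verify case by case on the six admissible pairs: $\star y_{lm}^{0,\delta} = y_{lm}^{2,\dd}$ and $\star y_{lm}^{0,h} = y_{lm}^{2,h}$ are the definitions, $\star y_{lm}^{1,\dd} = y_{lm}^{1,\delta}$ is the definition of $y_{lm}^{1,\delta}$ in Eq.~\ref{eq:spectral_forms:1_forms}, and the remaining cases $(1,\delta)$, $(2,\dd)$, $(2,h)$ are obtained by applying $\star$ once more and using $\star\star = (-1)^{\fdeg(2-\fdeg)}$; one then checks that the prefactor agrees with the stated $(-1)^{\fdeg|\nu|}$ in each case, a finite check. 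For claim (v), I would write each $y_{lm}^{\fdeg,\nu}$ through $y_{lm}$, $\dd y_{lm}$, $\star\,\dd y_{lm}$ and the volume form $\star 1$, expand the wedge product with the graded Leibniz rule, and invoke the classical fact that products of bounded-degree spherical polynomials satisfy $\mathcal{H}_{\leq l_1}(\S2)\cdot\mathcal{H}_{\leq l_2}(\S2)\subseteq\mathcal{H}_{\leq l_1+l_2}(\S2)$ (Clebsch--Gordan / Gaunt selection rule), together with the observation that neither $\dd$ nor $\star$ raises the spherical-harmonic degree; this yields the claimed band bound, with $\mathcal{H}_{l_1+l_2}(\Omega^{\fdeg_1+\fdeg_2})$ read as degree $\leq l_1+l_2$.

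The routine but genuinely error-prone part is the global bookkeeping of signs and normalizations: the sign of $\star\star$ on $1$-forms, the sign in $\delta = \pm\,\star\,\dd\,\star$, the placement of the $\sqrt{l(l+1)}$ and $l(l+1)$ factors carried by $\dd$ and $\star\,\dd$, and the normalization of the volume form in the $(\theta,\phi)$-chart, all of which must be fixed once and propagated consistently so that the unit coefficients in (ii) and the exact factors in (i) and (iii) come out right. A secondary delicate point is the degree-tracking in (v), where the $\sin\theta$ factors introduced by $\partial_\theta$ in the chart must be seen to cancel against those in the volume form before the product bound applies.
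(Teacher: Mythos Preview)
Your proposal is correct and follows essentially the same approach as the paper: both reduce everything to the defining equations together with the standard identities $\dd\circ\dd=0$, $\delta=\star\,\dd\,\star$, $\Delta=\dd\delta+\delta\dd$, and the Clebsch--Gordan product rule for spherical harmonics. The only cosmetic difference is that the paper derives (iii) from (i)+(ii) via $\delta=\star\,\dd\,\star$ and then (iv) from (i)+(iii), whereas you obtain (iv) first via the commutation of $\Delta$ with $\dd$ and $\star$ and then deduce (iii) from (i)+(iv); both orderings are equally short and valid.
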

\vspace{0.5em}

\begin{proof}
  Properties i.) and ii.) are immediate consequences of the definition.
  Properties iii.) and iv.) then follow since $\delta = \star \dd \star$ and $\Delta = \dd \delta + \delta \dd$.
  The last property holds by the standard properties of spherical harmonics and the wedge product.
\end{proof}

From a practical point of view, Theorem~\ref{prop:spectral_ec:properties} implies, for example, that for $\alpha \in L_2(\Omega_{\delta}^{\fdeg},S^2)$ one has
\begin{align}
  \dd \alpha
  = \dd \left( \sum_{lm} \alpha_{lm} \, y_{lm}^{k,\delta} \right)
  = \sum_{lm} \alpha_{lm} \, \dd y_{lm}^{k,\delta}
  = \sum_{lm} \sqrt{l(l+1)} \, \alpha_{lm} \, y_{lm}^{k+1,\dd} .
\end{align}
Thus the basis functions coefficients $\alpha_{lm}$ are scaled by $\sqrt{l(l+1)}$ but otherwise invariant under the exterior derivative and the different degree of $\dd \alpha$ is realized by performing the reconstruction with the $y_{lm}^{k+1,\dd}(\omega)$ form basis functions of degree $r+1$.
An analogous observation holds for the Hodge dual.
In other words, Theorem~\ref{prop:spectral_ec:properties} shows that in the spectral exterior calculus both the exterior derivative, $\dd$, and the Hodge dual, $\star$, are diagonal operators.
Numerically, they can hence be computed very efficiently.
Furthermore, they preserve the bandlimit for a bandlimited function.
Theorem~\ref{prop:spectral_ec:properties} also shows why it is convenient to use separate bases for the exact and co-exact form: the domain, range and kernel of the exterior derivative are cleanly separated.
Property iv.) in the theorem reflects the doubly graded structure of spectral differential forms with one grading in the degree $\fdeg$ of the forms and a second one in the harmonic degree $l$.
The product there can also be expressed precisely using Clebsch-Gordon coeffcients, cf.~\cite[Sec. 3]{Gia2019}, but since we will not need it in the following we leave this to future work.
An alternative way to evaluate the wedge product is the transform method, i.e. the evaluation of the product in the spatial domain.


\begin{remark}[Stokes' theorem]
  \label{remark:stokes_theorem:spectral}
  An important result in the exterior calculus is Stokes' theorem,
  \begin{subequations}
    \label{eq:stokes:spectral}
  \begin{align}
    \label{eq:stokes:spectral:1}
    \int_{\partial U} \alpha = \int_{U} \dd \alpha
  \end{align}
  for $\alpha \in \Omega^{\fdeg}(\S2)$ and $U \subseteq \S2$.
  Exploiting the Hodge-Helmholtz decomposition, expanding both sides in the respective spectral differential form basis functions and using linearity we can write Eq.~\ref{eq:stokes:spectral:1} as
  \begin{align}
    \label{eq:stokes:spectral:4}
     \sum_{lm} \alpha_{lm} \int_{\partial U} y_{lm}^{\fdeg,\delta}
    &=  \sum_{lm} \alpha_{lm} \sqrt{l(l+1)} \int_{U} \, y_{lm}^{\fdeg+1,\dd} .
  \end{align}
  \end{subequations}
  The integrals above have, in general, to be solved numerically using quadrature rules for the sphere~\cite{Hesse2010a}.
  Due to the global support of the spherical harmonics, the locality of $U$ can thereby, however, not be exploited and it is not obvious that either of the integrals is easier to evaluate numerically.
  We will return to this observation in the next section when we discuss the differential form wavelets.

  Although Eq.~\ref{eq:stokes:spectral:4} is of limited practical relevance, in the special case when $U$ is a spherical cap it provides insight into how geometric and functional analytic properties interact in the spectral exterior calculus.
  Let $\mathcal{C}^{\gamma}$ be a spherical cap of opening angle $\gamma$ and with boundary $\partial \mathcal{C}^{\gamma}$ and, without loss of generality, assume that it is centered at the North Pole.
  Also let $\alpha$ be a $1$-form.
  To compute Eq.~\ref{eq:stokes:spectral:4} we write the integrals using the characteristic functions $\chi_{_{\mathcal{C}^{\gamma}}}$ and $\chi_{_{\partial \mathcal{C}^{\gamma}}}$, i.e. as
  \begin{align}
    \label{eq:stokes:spectral:cap:0}
         \sum_{lm} \alpha_{lm} \int_{\S2} \chi_{_{\partial \mathcal{C}^{\gamma}}} \cdot y_{lm}^{1,\delta}
    &=  \sum_{lm} \alpha_{lm} \sqrt{l(l+1)} \int_{\S2} \chi_{_{\mathcal{C}^{\gamma}}} \, y_{lm}^{2,\dd}  .
  \end{align}
  By the orthonormality of the spectral differential form basis functions, it is convenient to compute the integrals using the basis representations of the characteristic functions.
  Their closed form representations are given by
  \begin{subequations}
  \label{eq:stokes:spectral:cap}
  \begin{align}
    \label{eq:stokes:spectral:cap:1}
    \mathcal{C}_{lm}^{\gamma} &= C_{lm} \, \frac{P_{l-1}(\cos{\gamma}) - P_{l+1}(\cos{\gamma})}{2l+1} \, \delta_{m0}
    \\[4pt]
    \label{eq:stokes:spectral:cap:2}
    \partial \mathcal{C}_{lm}^{\gamma} &= C_{lm} \, P_l(\cos{\gamma}) \, \delta_{m 0} .
  \end{align}
  The coefficients $\mathcal{C}_{lm}^{\gamma}$ decay as $\mathcal{O}(1/l)$ in $l$ while the $\partial \mathcal{C}_{lm}^{\gamma}$ do as $\mathcal{O}(1)$.
  This reflects that the boundary $\partial \mathcal{C}^{\gamma}$ is (functional analytically) more singular than the domain $\mathcal{C}^{\gamma}$.
  For the integral over the boundary on the left hand side of Eq.~\ref{eq:stokes:spectral:cap:0} only the component of $y_{lm}^{1,\delta}$ tangential to it, given by $( y_{lm}^{1,\delta} )_2$, is required.\footnote{More correctly, the pullback along the inclusion map $i : \partial \mathcal{C}^{\alpha} \to \S2$ should be used here.}
  Using the recurrence relationship for the Legendre polynomials it can be written as
  \begin{align}
    \label{eq:stokes:spectral:cap:3}
    ( y_{l0}^{1,\delta} )_2 
    = \frac{\partial}{\partial \theta} \frac{y_{l0}}{\sqrt{l(l+1)}} = C_{l0} \, \frac{\sqrt{l(l+1)}}{2l+1} \big(  P_{l-1}(\cos{\alpha}) - P_{l+1}(\cos{\alpha}) \big)
  \end{align}
  where we only consider $m=0$ since, by the $\phi$-symmetry of the spherical cap, cf. Eq.~\ref{eq:stokes:spectral:cap}, only these will yield a nontrivial contribution to the integrals in Eq.~\ref{eq:stokes:spectral:cap:0}.
  Inserting Eqs.~\ref{eq:stokes:spectral:cap} into Eq.~\ref{eq:stokes:spectral:4} and using the orthonormality of the $y_{lm}^{\fdeg,\nu}$ we obtain for the integrals in the equation,
  \begin{align}
    \int_{\S2} \! \! \chi_{_{\partial \mathcal{C}^{\gamma}}} \cdot y_{lm}^{1,\delta}
    &= \big( C_{l0} \,  P_l(\cos{\gamma}) \big) \Bigg( \! C_{l0} \frac{\sqrt{l(l+1)}}{2l+1} \big(  P_{l-1}(\cos{\gamma}) - P_{l+1}(\cos{\gamma}) \big) \! \Bigg)
    \\[5pt]
    \int_{\S2} \chi_{_{\mathcal{C}^{\gamma}}} \, y_{lm}^{2,\dd}
    &= \Bigg( C_{l0} \, \frac{P_{l-1}(\cos{\gamma}) - P_{l+1}(\cos{\gamma})}{2l+1} \Bigg) \, \big(  C_{l0} \, P_l(\cos{\gamma} ) \big)
  \end{align}
  \end{subequations}
  The equality in Eq.~\ref{eq:stokes:spectral:4} thus, indeed, holds.
  The last equations, furthermore, show that the regularity that is lost by going from $\alpha$ to $\dd \alpha$, in the form of the gain factor $\sqrt{l(l+1)}$, is compensated by the higher regularity of $\mathcal{C}^{\gamma}$ compared to $\partial \mathcal{C}^{\gamma}$, expressed in the different decay rates in Eq.~\ref{eq:stokes:spectral:cap:1} and Eq.~\ref{eq:stokes:spectral:cap:2}.
\end{remark}

\begin{remark}
  As remarked before, in the spectral exterior calculus $\dd$ and $\star$ are exactly satisfied, in the sense of the continuous theory, and both are realized by diagonal operators.
  We are not aware of another numerically practical discretization of exterior calculus with these properties.
  A diagonal Hodge dual matrix also appears in other approaches, e.g. Discrete Exterior Calculus~\cite{Desbrun2006}.
  However, there it only provides an approximation, akin to mass lumping in classical finite elements, to increase computational efficiency.
  Similar to other higher-order discretizations~\cite{Rapetti2009,Hiptmair2001,Rufat2014,Gross2018a}, the spectral exterior calculus combines structure preservation with spectral accuracy, i.e. optimal convergence rates for smooth problems.
  For us, the accuracy comes at the usual price for spectral methods, namely global support.
  This makes the approach ill suited for problems with strongly varying regularity or on subdomains of $\S2$ and provides a principal motivation for the local spectral exterior calculus $\Psiec$.
\end{remark}
%


\begin{remark}[Relationship to ``Spectral Exterior Calculus'' by Berry and Giannakis~\cite{Berry2018}]
  \label{remark:berry_giannakis}
  Recently, Berry and Giannakis introduced a spectral exterior calculus for manifold learning problems.
  The work is also based on eigenfunctions of the Laplace-Beltrami operator.
  However, these authors assume that only data points $x_i$ sampled from a manifold $\mathcal{M}$ are given and these are to be used to recover properties of $\mathcal{M}$.
  Our spectral exterior calculus, in contrast, aims at the solution of partial differential equations on a fixed manifold, in our case $\S2$, and it relies on an explicit representation of the Laplacian eigenfunctions there.
  Our construction could be generalized to other embedded $2$-manifolds but it becomes useful only when the eigenfunctions are known (at least numerically, cf.~\cite{Vallet2008}).
\end{remark}

\subsection{Homogeneous Sobolev Spaces for $\Omega_{\nu}^{\fdeg}(\S2)$}
\label{sec:forms:sobolev}

The functional analytical setting for $\Psiec$ will be the homogeneous Hilbert-Sobolev spaces $\dot{H}^s(\Omega_{\nu}^{\fdeg},\S2)$.
These spaces will hence be introduced next.

For scalar functions, i.e. differential forms of degree $0$, the homogeneous Sobolev space can be defined as
\begin{align}
  \label{eq:def:homogeneous_sobolev:freq}
  \dot{H}^s(\S2) = \Big\{ f : \S2 \to \R \, \Big\vert \, \Vert f \Vert_{2,s} = \sum_{l=0}^{\infty} \sum_{m=-l}^l (l(l+1))^{s} \, \vert f_{lm} \vert^2 < \infty , \,  f_{00} = 0 \Big\}
\end{align}
where $s \in \R$ is the regularity order and the $f_{lm}$ are the $L_2$ spherical harmonics coefficients of $f$.
In contrast to classical Sobolev spaces $H^s(\S2)$ on the sphere, where the weight is $(1+l)^{2s}$ (or a norm-equivalent choice, see e.g.~\cite{Brauchart2014,LeGia2010}), for homogeneous ones the weight function vanishes for $l=0$.
This implies that for the spaces to be Hilbert either the auxiliary condition $f_{00} = 0$ is required or one has to work with appropriate co-sets~\cite[Ch.~2]{Galdi2011}.

An alternative to these choices is to respect the structure of the Hodge-Helmholtz decomposition and define the homogeneous Sobolev space for $0$-forms only on the space of co-exact ones where, by construction, $f_{00} = 0$.
The definition can then, furthermore, be carried over to $1$- and $2$-forms using the expansions in the spectral differential form basis functions $y_{lm}^{\fdeg,\nu}(\omega)$.
For $\nu = \{ \dd , \delta \}$, we therefore have
\begin{align}
  \label{eq:def:sobolev:space:forms}
  \dot{H}^s(\Omega_{\nu}^{\fdeg},\S2)
  &= \Big\{ \alpha \in \Omega_{\nu}^{\fdeg}(\S2) \  \Big\vert \ \Vert \alpha \Vert_{2,s} = \sum_{l=0}^{\infty} \sum_{m=-l}^l (l(l+1))^{s} \, \vert \alpha_{lm}^{\fdeg,\nu} \vert^2 < \infty  \Big\}
\end{align}
where the $\alpha_{lm}^{\fdeg,\nu}$ are the spectral coefficients of the $\fdeg$-form $\alpha$, i.e. $\alpha_{lm}^{\fdeg,\nu} = \llangle \alpha , y_{lm}^{\fdeg,\nu} \rrangle$.

Eq.~\ref{eq:def:sobolev:space:forms} corresponds to the Sobolev spaces of the second kind for differential forms discussed by Dodziuk~\cite{Dodziuk1981}.
These are defined in the spatial domain using the Laplace-Beltrami operator $\Delta$.
Indeed, using the symbol $\hat{\Delta}$ of $\Delta$ in the spherical harmonics domain, $\hat{\Delta} = l(l+1)$, it is not difficult to see that for $s = 1$ the definition in Eq.~\ref{eq:def:sobolev:space:forms} is equivalent to
\begin{subequations}
\begin{align}
  \alpha \in \Omega_{\delta}^{\fdeg} \ &: \ \big\Vert \alpha \big\Vert_{\dot{H}^1(\Omega_{\delta}^r)}^2 = \llangle \dd \alpha , \dd \alpha \rrangle = \llangle \alpha , \Delta \alpha \rrangle = \int_{\S2} \alpha \wedge \star \Delta \alpha
  \\[4pt]
  \beta \in \Omega_{\dd}^{\fdeg} \ &: \ \big\Vert \beta \big\Vert_{\dot{H}^1(\Omega_{\dd}^r)}^2 = \llangle \delta \beta , \delta \beta \rrangle = \llangle \Delta \beta , \beta \rrangle = \int_{\S2} \Delta \beta \wedge \star \beta .
\end{align}
\end{subequations}

The dual space of $\dot{H}^{1}(\Omega_{\nu}^{\fdeg},\S2)$ is the space of distributions $\dot{H}^{-1}(\Omega_{\nu}^{\fdeg},\S2)$.
For $\fdeg=0$ and $\fdeg=2$ the usual scalar theory applies.
For $r=1$ the duality pairing is defined using the Hodge-Helmholtz decomposition of $\gamma \in \dot{H}^{-1}(\Omega^{1},S^2)$ given by $\gamma = \gamma_{\dd} + \gamma_{\delta} = \dd \alpha + \delta \beta$ with $\alpha \in \dot{H}^{0}(\Omega^{0},S^2)$ and $\beta \in \dot{H}^{0}(\Omega^{2},S^2)$.
Thus, for $\zeta_{\dd} \in \dot{H}^{1}(\Omega_{\dd}^{1},S^2)$ and $\zeta_{\delta} \in \dot{H}^{1}(\Omega_{\delta}^{1},S^2)$ the following non-degenerate pairings are well defined
\begin{subequations}
\label{eq:Hm1:duality_pairing}
\begin{align}
  \big\langle \! \big\langle \gamma_{\dd} , \zeta_{\dd} \big\rangle \! \big\rangle
  &= \big\langle \! \big\langle \dd \alpha , \zeta_{\dd} \big\rangle \! \big\rangle
  = \big\langle \! \big\langle \alpha , \delta \zeta_{\dd} \big\rangle \! \big\rangle
  = \int_{\S2} \alpha \wedge \star \delta \zeta_{\dd}
  \\[4pt]
    \big\langle \! \big\langle \gamma_{\delta} , \zeta_{\delta} \big\rangle \! \big\rangle
  &= \big\langle \! \big\langle \delta \beta , \zeta_{\delta} \big\rangle \! \big\rangle
  = \big\langle \! \big\langle \beta , \dd \zeta_{\delta} \big\rangle \! \big\rangle
  = \int_{\S2} \beta \wedge \star \dd \zeta_{\delta}
\end{align}
\end{subequations}
where it is easy to check that the integrands on the right hand side are, indeed, volume forms.
De Rahm~\cite{deRahm1984} introduced the term `current' to denominate differential forms whose coordinate function are distributions in the sense of Schwartz.
The space $\dot{H}^{-1}(\Omega^\fdeg,\S2)$ can also be defined in its own right, see e.g.~\cite{Strichartz1983}, but for us the duality in Eq.~\ref{eq:Hm1:duality_pairing} suffices.

\begin{remark}[Connection to $H(\mathrm{curl},\S2)$ and $H(\mathrm{div},\S2)$]
  \label{remark:Hcurl}
  The spaces $H(\mathrm{curl})$ and $H(\mathrm{div})$ of $L_2$-vector fields whose curl respectively divergence is also in $L_2$ provide the standard setting for finite element-type discretizations of exterior calculus~\cite{Nedelec1980,Girault1986,Hiptmair2002,Arnold2018}.
  On $\S2$, by the (Hodge-)-Helmholtz decomposition an arbitrary vector field $\vec{u} \in \mathfrak{X}(\S2)$ is given by
  \begin{align}
    \vec{u} = \sum_{l=1}^{\infty} \sum_{m=-l}^l u_{lm}^{\dd} \, \vec{y}_{lm}^{1,\dd} + \sum_{l=1}^{\infty} \sum_{m=-l}^l u_{lm}^{\delta} \, \vec{y}_{lm}^{1,\delta}
  \end{align}
  where the $\vec{y}_{lm}^{1,\dd}$, $\vec{y}_{lm}^{1,\delta}$ are the orthonormal vector spherical harmonics, i.e. the contravariant versions of the spectral differential forms $y_{lm}^{1,\dd}$, $y_{lm}^{1,\delta}$ of Sec.~\ref{sec:forms:spectral}.
  For the curl we have
  \begin{align}
    \mathrm{curl}(\vec{u})
    = ( \dd u^{\flat} )^{\sharp}
    = \sum_{l,m} u_{lm}^{1,\delta} \, \sqrt{l(l+1)} \, y_{lm} .
  \end{align}
  The space $H(\mathrm{curl},\S2)$ can thus also be characterized as
  \begin{align}
    H(\mathrm{curl},\S2)
    &= \Big\{ \vec{u} \in \mathfrak(\S2) \ \Big\vert \ \big\Vert u_{lm}^{1,\dd} \big\Vert_{\ell_2} < \infty, \, \big\Vert \sqrt{l(l+1)} \, u_{lm}^{1,\delta} \big\Vert_{\ell_2} < \infty  \Big\}
  \end{align}
  where $\Vert u_{lm}^{1,\delta} \Vert_{\ell_2} < \infty$ is automatically satisfied.
  Equivalently, we have
  \begin{align}
    \label{eq:H_curl:forms}
    H(\mathrm{curl},\S2)^{\flat} = \dot{H}^0( \Omega_{\dd}^1 , \S2 ) \oplus \dot{H}^1( \Omega_{\delta}^1 , \S2 )
  \end{align}
  with the flat on the left hand side being understood element-wise.
  Analogously,
  \begin{align}
    \label{eq:H_div:forms}
    H(\mathrm{div},\S2)^{\flat} = \dot{H}^1( \Omega_{\dd}^1 , \S2 ) \oplus \dot{H}^0( \Omega_{\delta}^1 , \S2 )  .
  \end{align}
  For the application we have in mind, it is natural and convenient to keep exact and co-exact parts of $1$-forms (or the associated vector fields) separate.
  In the literature, Hiptmair, Li, and Zou~\cite{Hiptmair2012} similarly define a separate space for closed differential forms, which they denote as $H(\dd 0 , \R^d,  \Omega^l)$.
  Compared to Eq.~\ref{eq:H_curl:forms} and Eq.~\ref{eq:H_div:forms}, we work with one degree of regularity less to obtain closure under the Hodge dual, see Fig.~\ref{fig:psiec_s2}.

  Finite element exterior calculus~\cite{Arnold2018} uses Hilbert complexes, a concept first introduced for Hodge theory as the functional analytic setting for exterior calculus~\cite{Bruning1992}.
  A Hilbert complex is a sequence of Hilbert spaces $W^{\fdeg}$ with a densely defined, closed linear operator $\dd^{\fdeg} : W^{\fdeg} \to W^{\fdeg+1}$ that maps its domain into the kernel of $\dd^{\fdeg+1}$.
  The complex is closed if $\dd^{\fdeg}$ has closed range.
  This provides a general setting for the Hodge-Helmholtz decomposition.
  The connection between our formulation using homogeneous Sobolev spaces and the framework of Hilbert complexes is left to future work.


\end{remark}

\subsection{Wavelet Differential Forms for $\S2$}
\label{sec:forms:psi}

In the following, we will introduce a consistent set of frames for differential $\fdeg$-forms that satisfies important properties of the exterior calculus.
In analogy to existing discretizations, such as DEC~\cite{Hirani2003,Desbrun2006} and FEEC~\cite{Arnold2006}, we refer to it as $\Psiec$.

The principle idea of $\Psiec$ is to use the discrete window functions $\kappa_{l}^{j}$ of the scalar wavelets in Sec.~\ref{sec:wavelets} with the spectral differential form basis functions $y_{lm}^{\fdeg,\nu}(\omega)$, analogous to Eq.~\ref{eq:mother_wavelet}.
By linearity of the Hilbert space structure as well as the exterior derivative, this leads to differential form wavelets that respect essential properties of the exterior calculus and that are well localized in space and frequency.
Although one could work with tight frames for $0$, $1$-, and $2$-forms, closure under the Hodge dual is obtained by working with Stevenson frames for $0$-forms and $2$-forms, which result through an $l$-dependent, Sobolev-type weight.
For the Stevenson frames, primary and dual frame functions $\smash{\psi_{jk}^{\fdeg,\delta}(\omega)}$ and $\smash{\tilde{\psi}_{jk}^{\fdeg,\delta}(\omega)}$ are not identified (using the Riesz representation theorem~\cite{Stevenson2003,Balazs2019}) but they form frames for the dual spaces $\dot{H}^{-\fdeg+1}(\Omega^{\fdeg})$ and $\dot{H}^{\fdeg-1}(\Omega^{\fdeg})$, respectively.
Since the $\smash{\psi_{jk}^{\fdeg,\nu}(\omega)}$ and $\smash{\tilde{\psi}_{jk}^{\fdeg,\nu}(\omega)}$ have analytic expressions and differ only by the $l$-dependent weight, the practical difference to a tight frame is limited albeit the weighting affects the localization when the frame functions are considered individually.
Similar to~\cite{Balazs2019}, we will also distinguish norms despite them being equivalent in the finite dimensional spaces spanned by the differential $\fdeg$-form wavelets.
This is conceptually and numerically advantageous  in our case.

We begin with the definition of differential form wavelets.

\begin{definition}
  \label{def:psiec:psi_forms}
  Let the $L_j$-bandlimited scalar wavelets $\psi_{jk}(\omega)$, with $L_j < \infty$, form a tight wavelet frame for $L_2(\S2)$ and let $\kappa_{lm}^{jk} = \sqrt{ 4\pi / 2l+1} \, \sqrt{w_{jk}} \, \kappa_l^j \, y_{lm}(\lambda_{jk})$ be the associated spherical harmonics coefficients.
  Furthermore, let $a_l = \sqrt{l(l+1)}$.
  The \emph{spherical wavelet differential $\fdeg$-forms $\psi_{jk}^{\fdeg,\nu}(\omega)$ and their duals $\tilde{\psi}_{jk}^{\fdeg,\nu}(\omega)$} are then

  \noindent
  \begin{tabular}{lc}
   $\Omega^0(S^2) \quad$ &
     $\!\begin{aligned}
    \psi_{jk}^{0,h}(\omega) = \sum_{l=0}^{L_j} & \sum_{m=-l}^l \kappa_{lm}^{jk} \, y_{lm}^{0,h}(\omega) =  \kappa_{00}^{jk} \, y_{00}^{0,h}(\omega)
    \\[3pt]
    \psi_{jk}^{0,\delta}(\omega) = \sum_{l=0}^{L_j} \! \sum_{m=-l}^l \! \! a_l^{-1} \, \kappa_{lm}^{jk} & \, y_{lm}^{0,\delta}(\omega)
    \quad \quad
    \tilde{\psi}_{jk}^{0,\delta}(\omega) = \sum_{l=0}^{L_j} \! \sum_{m=-l}^l \! \! a_l \, \kappa_{lm}^{jk} \, y_{lm}^{0,\delta}(\omega)
    \end{aligned}$
    \\[-5pt]
    \\ \hdashline
    \\[-5pt]
    $\Omega^1(S^2) \quad$ &
    $\!\begin{aligned}
      \psi_{jk}^{1,\dd}(\omega) &= \sum_{l=0}^{L_j} \sum_{m=-l}^l \kappa_{lm}^{jk} \, y_{lm}^{1,\dd}(\omega)
      \\[3pt]
      \psi_{jk}^{1,\delta}(\omega) &= \sum_{l=0}^{L_j} \sum_{m=-l}^l \kappa_{lm}^{jk} \, y_{lm}^{1,\delta}(\omega)
    \end{aligned}$
    \\[-5pt]
    \\ \hdashline
    \\[-5pt]
    $\Omega^2(S^2) \quad$ &
    $\!\begin{aligned}
    \psi_{jk}^{2,\dd}(\omega) = \sum_{l=0}^{L_j} \! \sum_{m=-l}^l \! \! a_l \, \kappa_{lm}^{jk} & \, y_{lm}^{2,\dd}(\omega)
    \quad \ \
    \tilde{\psi}_{jk}^{2,\dd}(\omega) = \sum_{l=0}^{L_j} \! \sum_{m=-l}^l \! \! a_l^{-1} \, \kappa_{lm}^{jk} \, y_{lm}^{2,\dd}(\omega)
    \\[3pt]
    \psi_{jk}^{2,h}(\omega) = \sum_{l=0}^{L_j} & \sum_{m=-l}^l \kappa_{lm}^{jk} \, y_{lm}^{2,h}(\omega) = \kappa_{00}^{jk} \, y_{00}^{2,h}(\omega)
    \end{aligned}$
  \end{tabular}
\end{definition}
\vspace{5pt}

The $1$-form wavelets $\psi_{jk}^{1,\nu}(\omega)$ as well as the harmonic ones $\psi_{jk}^{\fdeg,h}(\omega)$ are self-dual and hence the dual frame functions are not explicitly listed.

The harmonic forms in $\Omega_h^0(S^2)$ and $\Omega_h^2(S^2)$ are the constants.
It is hence advantageous to select the scaling function windows as $\kappa_{lm}^{-1,0} = \delta_{l0}$.
The scaling functions then represent the harmonic forms on $\S2$ and the wavelets, which have spectral support with $l \geq 1$, cover the exact and co-exact forms.
Unless mentioned otherwise, we will assume this in the following.

\begin{figure}
  \includegraphics[width=\textwidth]{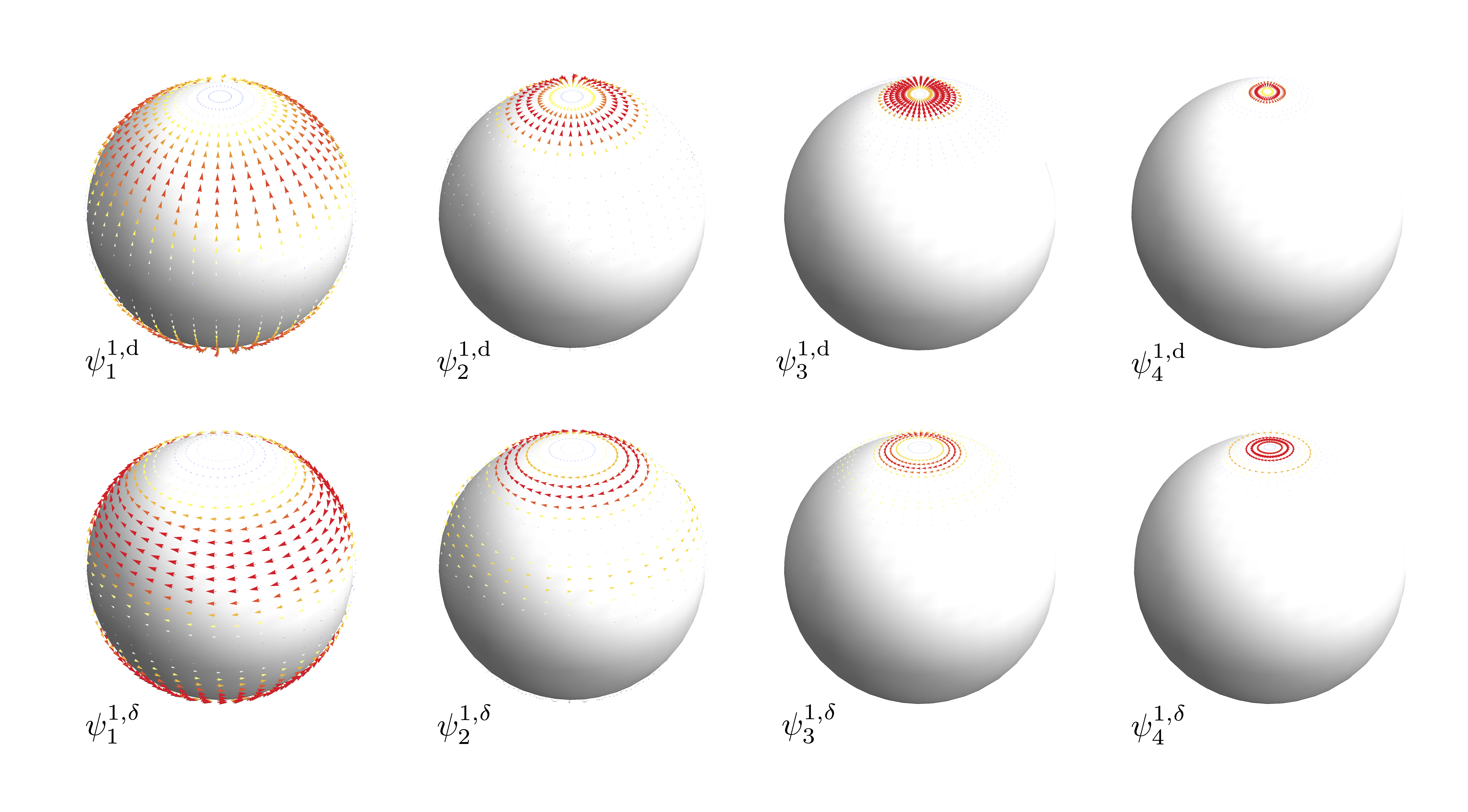}
  \caption{Visualization of exact and co-exact $1$-form mother wavelets, $\psi_j^{1,\dd}(\omega)$ (top) and $\psi_j^{1,\delta}(\omega)$ (bottom), respectively, for $j=1,2,3,4$. The exact ones correspond to curl free vector fields and they are close to a perfect, localized sink at the North pole. Correspondingly, the co-exact wavelets are isomorphic to divergence free vector fields and they are close to a perfect, localized vortex around the pole. }
  \label{fig:sh1_grid_up}
\end{figure}

The next theorem establishes that for exact and co-exact forms, i.e. $\nu \in \{ \dd , \delta \}$ the differential $\fdeg$-form wavelets form (Stevenson) frames for the homogeneous Sobolev spaces $\dot{H}^1(\Omega_{\delta}^0,\S2)$, $\dot{H}^0(\Omega_{\nu}^1,\S2) \cong L_2(\Omega_{\nu}^1 , \S2)$, and $\dot{H}^{-1}(\Omega_{\dd}^2 ,\S2)$.

\begin{theorem}
  \label{thm:diff_form_wavelets:frame_property}
  For $\nu \in \{ \dd , \delta \}$, the differential $\fdeg$-form wavelets $\psi_{jk}^{\fdeg,\nu}(\omega)$ in Def.~\ref{def:psiec:psi_forms} provide (Stevenson) frames for the spaces $\dot{H}^{-\fdeg+1}( \Omega_{\nu}^{\fdeg} , \S2 )$ with the $\tilde{\psi}_{jk}^{\fdeg,\nu}(\omega)$ being the canonical dual frames in $\dot{H}^{\fdeg-1}( \Omega_{\nu}^{\fdeg} , \S2 )$.
\end{theorem}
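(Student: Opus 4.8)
\emph{Proof strategy.} The plan is to reduce the claim, by a single Sobolev-weighted change of basis, to the Parseval tightness of the scalar wavelet frame already established in Lemma~\ref{prop:wavelets:calderon}. First I would record that, since $a_l = \sqrt{l(l+1)} > 0$ for $l \geq 1$, all four families that occur for $\nu \in \{\dd,\delta\}$ — namely $\fdeg=0$ with $\nu=\delta$, $\fdeg=1$ with $\nu\in\{\dd,\delta\}$, and $\fdeg=2$ with $\nu=\dd$ — can be written from Def.~\ref{def:psiec:psi_forms} in the unified form
\begin{align}
  \psi_{jk}^{\fdeg,\nu} &= \sum_{l\geq 1}\sum_{m=-l}^{l} a_l^{\,\fdeg-1}\,\kappa_{lm}^{jk}\, y_{lm}^{\fdeg,\nu}\,, \\
  \tilde\psi_{jk}^{\fdeg,\nu} &= \sum_{l\geq 1}\sum_{m=-l}^{l} a_l^{\,1-\fdeg}\,\kappa_{lm}^{jk}\, y_{lm}^{\fdeg,\nu}\,,
\end{align}
where the sum over levels effectively starts at $j=0$: with the convention $\kappa_{lm}^{-1,0} = \delta_{l0}$ the scaling-function level $j=-1$ is annihilated by the $y_{lm}^{\fdeg,\delta}$ and $y_{lm}^{\fdeg,\dd}$, which vanish at $l=0$. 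In particular both $\|\psi_{jk}^{\fdeg,\nu}\|_{\dot H^{-\fdeg+1}}^2$ and $\|\tilde\psi_{jk}^{\fdeg,\nu}\|_{\dot H^{\fdeg-1}}^2$ collapse to $\sum_{l,m}|\kappa_{lm}^{jk}|^2 < \infty$, so the two families lie in the spaces claimed.

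Next I would coordinatize the Sobolev spaces spectrally. Since the $y_{lm}^{\fdeg,\nu}$ are an orthonormal basis of $L_2(\Omega_\nu^{\fdeg},\S2)$, expansion in them identifies $\dot H^{s}(\Omega_\nu^{\fdeg},\S2)$ isometrically with the weighted sequence space $\ell_2\big((a_l^{2s})_{l\geq 1,\,|m|\leq l}\big)$ for every $s$, and one checks — trivially for $\fdeg=1$, and by unwinding Sec.~\ref{sec:forms:sobolev} together with Eq.~\ref{eq:Hm1:duality_pairing} for $\fdeg=0,2$ — that the $\dot H^{-\fdeg+1}$--$\dot H^{\fdeg-1}$ duality pairing $\llangle\cdot,\cdot\rrangle$ acts on spectral coefficients as the plain bilinear pairing $\sum_{l,m}\alpha_{lm}^{\fdeg,\nu}\,\overline{\beta_{lm}^{\fdeg,\nu}}$. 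Let $\mathcal H_{\geq 1}(\S2)\subset L_2(\S2)$ be the closed span of $\{y_{lm}: l\geq 1\}$ and let $U_\fdeg : \dot H^{-\fdeg+1}(\Omega_\nu^{\fdeg},\S2)\to \mathcal H_{\geq 1}(\S2)$ send the form with coefficients $(\alpha_{lm})$ to the scalar function with coefficients $(a_l^{\,1-\fdeg}\alpha_{lm})$; the weight bookkeeping makes $U_\fdeg$ unitary, a one-line computation gives $U_\fdeg\,\psi_{jk}^{\fdeg,\nu} = \psi_{jk}$, the scalar wavelet of Eq.~\ref{eq:wavelets:lambda_jk}, and $\llangle\alpha,\tilde\psi_{jk}^{\fdeg,\nu}\rrangle = \langle U_\fdeg\alpha,\psi_{jk}\rangle_{L_2}$. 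Because the scaling functions $\psi_{-1,k}$ are supported at $l=0$ and hence orthogonal to $\mathcal H_{\geq 1}(\S2)$, Lemma~\ref{prop:wavelets:calderon} restricted to $l,l'\geq1$ shows that $\{\psi_{jk}\}_{j\geq 0,\,k}$ is a Parseval tight frame for $\mathcal H_{\geq 1}(\S2)$. Transporting this through $U_\fdeg$ yields at once the (tight) frame estimate $\sum_{j,k}|\llangle\alpha,\tilde\psi_{jk}^{\fdeg,\nu}\rrangle|^2 = \|\alpha\|_{\dot H^{-\fdeg+1}}^2$, the norm-one boundedness and surjectivity of the synthesis operator $(c_{jk})\mapsto\sum_{j,k}c_{jk}\,\psi_{jk}^{\fdeg,\nu}$ from $\ell_2$ onto $\dot H^{-\fdeg+1}(\Omega_\nu^{\fdeg},\S2)$, and the reconstruction identity $\alpha = \sum_{j,k}\llangle\alpha,\tilde\psi_{jk}^{\fdeg,\nu}\rrangle\,\psi_{jk}^{\fdeg,\nu}$ with unconditional convergence — i.e. the Stevenson frame property~\cite{Stevenson2003,Balazs2019}. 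For the canonical-dual claim I would introduce the companion unitary $U_\fdeg' : \dot H^{\fdeg-1}(\Omega_\nu^{\fdeg},\S2)\to\mathcal H_{\geq1}(\S2)$ with weight $a_l^{\fdeg-1}$ and note that under $U_\fdeg,U_\fdeg'$ the Stevenson frame operator $S = (\text{synthesis})\circ(\text{analysis}):\dot H^{\fdeg-1}\to\dot H^{-\fdeg+1}$ becomes the $L_2$ frame operator of the Parseval frame $\{\psi_{jk}\}$ on $\mathcal H_{\geq1}(\S2)$, which is the identity; hence $S = U_\fdeg^{-1}U_\fdeg'$ and $S^{-1}\psi_{jk}^{\fdeg,\nu}$ has spectral coefficients $a_l^{\,1-\fdeg}\kappa_{lm}^{jk}$, i.e. equals $\tilde\psi_{jk}^{\fdeg,\nu}$ exactly.

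I do not expect a deep obstacle: the work is bookkeeping. The two points needing care are (i) verifying that the $\dot H^{-\fdeg+1}$--$\dot H^{\fdeg-1}$ duality pairing really is represented in spectral coordinates by the naive $\ell_2$ pairing — routine for $\fdeg=1$, but for $\fdeg=0,2$ one must track the definitions of Sec.~\ref{sec:forms:sobolev} and of the $1$-form pairing in Eq.~\ref{eq:Hm1:duality_pairing} through the Hodge dual — and (ii) the exponent arithmetic, i.e. that the Sobolev weight $(l(l+1))^{1-\fdeg}$ of the space, the weight $a_l^{\fdeg-1}$ built into $\psi_{jk}^{\fdeg,\nu}$, and the weight $a_l^{1-\fdeg}$ built into $\tilde\psi_{jk}^{\fdeg,\nu}$ combine so that $U_\fdeg$ is an isometry carrying $\psi_{jk}^{\fdeg,\nu}$ to the scalar $\psi_{jk}$ and $\tilde\psi_{jk}^{\fdeg,\nu}$ to $S^{-1}\psi_{jk}^{\fdeg,\nu}$. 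Once these are checked the theorem is an immediate transport of Lemma~\ref{prop:wavelets:calderon}, and it in fact delivers the sharper conclusion that the frame (and its dual) are Parseval-tight.
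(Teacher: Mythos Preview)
Your proposal is correct and is essentially the paper's own argument: both reduce the claim to Lemma~\ref{prop:wavelets:calderon} by observing that the Sobolev weight $a_l^{\pm(1-\fdeg)}$ built into $\psi_{jk}^{\fdeg,\nu}$ and $\tilde\psi_{jk}^{\fdeg,\nu}$ cancels the weight defining the $\dot H^{-\fdeg+1}$ norm, so that everything collapses to the scalar identity $\sum_{j,k}\overline{\psi_{lm}^{jk}}\psi_{l'm'}^{jk}=\delta_{ll'}\delta_{mm'}$. The only difference is packaging: the paper spells out the case $\fdeg=0$, $\nu=\delta$ by direct computation and declares the rest analogous, whereas you treat all four cases at once via the unitary $U_\fdeg$ and, in doing so, also make the canonical-dual identification and the Parseval tightness explicit --- points the paper's proof leaves implicit.
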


\begin{proof}
  We compute the case $\fdeg=0$, $\nu=\delta$; the other ones follow by analogous calculations.
  We first verify that $\psi_{jk}^{0,\delta}(\omega) \in \dot{H}^{1}(\Omega_{\delta}^0,\S2)$.
  Using the spatial definition of the homogeneous $\dot{H}^1$ Sobolev inner product in Eq.~\ref{eq:def:sobolev:space:forms} we have
  \begin{subequations}
  \begin{align}
    \Big\Vert \psi_{jk}^{0,\delta} \Big\Vert_{\dot{H}^{1}}
    = \Big\langle \dd \psi_{jk}^{0,\delta} , \dd \psi_{jk}^{0,\delta} \Big\rangle_{\dot{H}^{0}}
    = \int_{\S2} \psi_{jk}^{0,\delta} \, \Delta \overline{\psi_{jk}^{0,\delta}} \, d\omega
  \end{align}
  where the overbar denotes complex conjugation.
  With Def.~\ref{def:psiec:psi_forms} and that the scalar spherical harmonics are eigenfunctions of the Laplacian we obtain
  \begin{align}
    \Big\Vert \psi_{jk}^{0,\delta} \Big\Vert_{\dot{H}^{1}}
    &=
   \sum_{l,m} \sum_{l',m'}  \frac{\kappa_{lm}^{jk}}{\sqrt{l(l+1)}} \frac{\overline{\kappa_{l'm'}^{jk}} \, l'(l'+1)}{\sqrt{l'(l'+1)}} \int_{\S2}  y_{lm}^{0,\delta}(\omega) \, \overline{y_{l'm'}^{0,\delta}(\omega)} \, d\omega
    = \sum_{l,m} \kappa_{lm}^{jk} \, \overline{\kappa_{lm}^{jk}}
    \nonumber
  \end{align}
  which is finite since it is true for the scalar wavelets and the filter taps $\kappa_{lm}^{jk}$ are in $\ell_2$.
  For the frame property, we start from the representation in the spectral differential form wavelets.
  Using Lemma~\ref{prop:wavelets:calderon} it can be written as
  \begin{align}
     f(\omega) &= \sum_{l,m} \sum_{l',m'} \frac{\sqrt{l(l+1)}}{\sqrt{l'(l'+1)}} \, f_{lm} \, y_{l'm'}^{0,\delta}(\omega) \sum_{j=-1}^{\infty} \sum_{k \in \mathcal{K}_j} \overline{\kappa_{lm}^{jk}} \, \kappa_{l'm'}^{jk} .
      \\[5pt]
      &= \sum_{j=-1}^{\infty} \sum_{k \in \mathcal{K}_j} \Bigg( \sum_{l,m} f_{lm} \, \Big( \sqrt{l(l+1)}\, \overline{\kappa_{lm}^{jk}} \Big) \Bigg) \frac{\kappa_{l'm'}^{jk}}{\sqrt{l'(l'+1)}} \, y_{l'm'}^{0,\delta}(\omega)
  \end{align}
  Using the definition of the primary and dual wavelets this equals
  \begin{align}
    \label{eq:}
    f(\omega) = \sum_{j=-1}^{\infty} \sum_{k \in \mathcal{K}_j} \big\langle f(\eta) , \tilde{\psi}_{jk}^{0,\delta}(\eta) \big\rangle \, \psi_{jk}^{0,\delta}(\omega) .
  \end{align}
  \end{subequations}
  The result follows now from Lemma~\ref{prop:wavelets:calderon}.
\end{proof}

The foregoing result establishes representability of differential $\fdeg$-forms with the wavelets $\psi_{jk}^{\fdeg,\nu}(\omega)$.
The next theorem shows that these provide a local spectral exterior calculus in that important operations of the exterior calculus are naturally defined.
It is the analogue of Theorem~\ref{prop:spectral_ec:properties} for spectral differential forms.

\begin{theorem}
  \label{thm:psiec}
  The spherical wavelet differential $\fdeg$-forms $\psi_{jk}^{\fdeg,\nu}(\omega)$ of Def.~\ref{def:psiec:psi_forms} satisfy:
   \vspace{0.8em}
  \begin{enumerate}[i.)]
    \setlength\itemsep{0.8em}

    \item Closure of exterior derivative: $\dd \psi_{jk}^{\fdeg,\delta} = \psi_{jk}^{\fdeg+1,\dd}$, $\dd \psi_{jk}^{\fdeg,\dd} = 0$

    \item Closure of Hodge dual: $(-1)^{\fdeg \, \vert \nu \vert} \star \psi_{jk}^{\fdeg,\nu} = \tilde{\psi}_{jk}^{n-\fdeg,\bar{\nu}}$

    \item Closure of co-differential: $\delta \psi_{jk}^{1,\dd} = \tilde{\psi}_{jk}^{0,\delta}$, $\delta \tilde{\psi}_{jk}^{2,\dd} = \psi_{jk}^{1,\delta}$,

    \item Closure of Laplacian for $0$-, $2$-forms: $\Delta \psi_{jk}^{0,\delta} = \tilde{\psi}_{jk}^{0,\delta}$, $\Delta \tilde{\psi}_{jk}^{2,\dd} = \psi_{jk}^{2,\dd}$,

    \item Double-graded algebra structure: $\psi_{j_1 k_1}^{\fdeg_1,\nu} \wedge \psi_{j_2 k_2}^{\fdeg_2,\nu} \in \mathcal{H}_{2^{j_1+j_2+2}}(\Omega_{\nu}^{\fdeg_1 + \fdeg_2},\S2)$

  \end{enumerate}
\end{theorem}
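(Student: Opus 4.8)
The plan is to obtain Theorem~\ref{thm:psiec} as a corollary of its spectral analogue, Theorem~\ref{prop:spectral_ec:properties}, together with linearity (and bilinearity for the wedge). The point is that every form in Def.~\ref{def:psiec:psi_forms} is a \emph{finite} linear combination of the spectral basis forms $y_{lm}^{\fdeg,\nu}(\omega)$ --- finite because the $\kappa_l^j$ are $L_j$-bandlimited --- whose coefficients all have the shape $a_l^{s}\,\kappa_{lm}^{jk}$ for a single exponent $s\in\{-1,0,1\}$ that depends only on $(\fdeg,\nu)$ and on whether one is looking at $\psi_{jk}^{\fdeg,\nu}$ or at $\tilde\psi_{jk}^{\fdeg,\nu}$. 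Since $\dd$, $\star$, $\delta$ and $\Delta$ are linear they pass through these finite sums, so each of i.)--iv.) collapses to the identity obtained by applying the relevant spectral relation to every summand and then tracking the surviving power of $a_l=\sqrt{l(l+1)}$; in particular no convergence or boundedness argument is needed. (The factor $a_l^{-1}$ only ever multiplies summands with $l\ge1$, since the exact and co-exact wavelets have spectral support in $l\ge1$, so it is always well defined where it occurs, and the harmonic wavelets $\psi_{jk}^{\fdeg,h}$, being multiples of constants, are dispatched at once: $\dd$, $\delta$, $\Delta$ annihilate them and $\star$ exchanges $\psi_{jk}^{0,h}$ and $\psi_{jk}^{2,h}$.)

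Concretely, for i.) with $\fdeg=0$ one writes $\dd\psi_{jk}^{0,\delta}=\sum_{l,m} a_l^{-1}\kappa_{lm}^{jk}\,\dd y_{lm}^{0,\delta}$ and substitutes $\dd y_{lm}^{0,\delta}=a_l\,y_{lm}^{1,\dd}$ from Theorem~\ref{prop:spectral_ec:properties}~i.); the factors $a_l^{-1}a_l$ cancel, leaving $\sum_{l,m}\kappa_{lm}^{jk}\,y_{lm}^{1,\dd}=\psi_{jk}^{1,\dd}$, while $\dd\psi_{jk}^{1,\dd}=0$ because each $y_{lm}^{1,\dd}$ is exact hence closed; the case $\fdeg=1$ (for $\psi_{jk}^{1,\delta}$, via $\dd y_{lm}^{1,\delta}=a_l\,y_{lm}^{2,\dd}$) and the trivial top-degree cases are identical. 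Parts ii.), iii.), iv.) follow the same template: ii.) multiplies by $(-1)^{\fdeg|\nu|}$ and uses $(-1)^{\fdeg|\nu|}\star y_{lm}^{\fdeg,\nu}=y_{lm}^{2-\fdeg,\bar\nu}$, so that e.g. $(-1)^{0}\star\psi_{jk}^{0,\delta}=\sum a_l^{-1}\kappa_{lm}^{jk}\,y_{lm}^{2,\dd}=\tilde\psi_{jk}^{2,\dd}$; iii.) uses $\delta y_{lm}^{\fdeg,\dd}=a_l\,y_{lm}^{\fdeg-1,\delta}$; and iv.) uses $\Delta y_{lm}^{\fdeg,\nu}=a_l^{2}\,y_{lm}^{\fdeg,\nu}$. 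Alternatively iv.) is immediate by composing i.) and iii.), since on $\S2$ the operator $\dd\delta$ annihilates $\Omega^0$ and $\delta\dd$ annihilates $\Omega^2$, whence $\Delta\psi_{jk}^{0,\delta}=\delta\dd\psi_{jk}^{0,\delta}=\delta\psi_{jk}^{1,\dd}=\tilde\psi_{jk}^{0,\delta}$ and $\Delta\tilde\psi_{jk}^{2,\dd}=\dd\delta\tilde\psi_{jk}^{2,\dd}=\dd\psi_{jk}^{1,\delta}=\psi_{jk}^{2,\dd}$. In every case the only thing to verify is that the leftover power of $a_l$ agrees with the coefficient prescription of the claimed target form --- a short arithmetic check.

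For v.) I would use bilinearity of $\wedge$ together with Theorem~\ref{prop:spectral_ec:properties}~v.): expanding both factors, $\psi_{j_1 k_1}^{\fdeg_1,\nu}\wedge\psi_{j_2 k_2}^{\fdeg_2,\nu}$ is a finite sum of terms $c\,y_{l_1 m_1}^{\fdeg_1,\nu}\wedge y_{l_2 m_2}^{\fdeg_2,\nu}$ with $l_1\le L_{j_1}$, $l_2\le L_{j_2}$, each lying in $\mathcal{H}_{l_1+l_2}(\Omega^{\fdeg_1+\fdeg_2},\S2)$; hence the product lies in $\bigoplus_{l\le L_{j_1}+L_{j_2}}\mathcal{H}_l(\Omega^{\fdeg_1+\fdeg_2},\S2)$, and with the windows $L_j=2^{j+1}-1$ one checks $L_{j_1}+L_{j_2}=2^{j_1+1}+2^{j_2+1}-2\le 2^{j_1+j_2+2}$ for all admissible levels, giving the stated bandlimit.

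I expect the only real difficulty to be bookkeeping rather than anything conceptual: keeping the $a_l^{\pm1}$ exponents straight across the three form degrees and the primary/dual distinction, and, in v.), being careful about which convention for $L_j$ is in force and about the meaning of the type subscript in $\Omega_\nu^{\fdeg_1+\fdeg_2}$ when $\fdeg_1+\fdeg_2=2$ --- the wedge of two co-exact $1$-forms is a $2$-form but need not be purely of one Hodge type, so this should be read exactly as in Theorem~\ref{prop:spectral_ec:properties}~v.), i.e. as membership in $\mathcal{H}_{\le 2^{j_1+j_2+2}}(\Omega^{\fdeg_1+\fdeg_2},\S2)$. Beyond that there is no analytic obstacle, since all sums involved are finite.
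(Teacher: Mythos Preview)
Your proposal is correct and follows essentially the same route as the paper: reduce everything to the spectral identities of Theorem~\ref{prop:spectral_ec:properties} via linearity (bilinearity for v.)) and track the surviving powers of $a_l$, with iii.) and iv.) optionally obtained by composing $\delta=\star\,\dd\,\star$ and $\Delta=\dd\delta+\delta\dd$. Your write-up is in fact more careful than the paper's---you spell out the $a_l$ bookkeeping and flag the interpretation of the bandlimit and the type subscript in v.)---but the underlying argument is the same.
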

\vspace{0.2em}

\begin{proof}
  Property i.) and ii.) are an immediate consequence of the definition of wavelet differential forms.
  Property iii.) then follows since the co-differential is given by $\delta = \star \dd \star$ and this in turn implies Property iv.) by $\Delta = \delta \dd + \dd \delta$.
  Finally,  the last property holds by the analogous property for spherical harmonics and the linearity of the wedge product.
\end{proof}

Theorem~\ref{thm:psiec} is summarized in Fig.~\ref{fig:psiec_s2}.
Implicit in Property i.) is that the maximum level $J$ that is used for a representation in practical numerical calculations is invariant under the exterior derivative, which implies that a finite representation remains finite and of the same dimension.
This property plays an important role for numerical calculations.
Property iii.) is the reason that we work with the dual forms $\tilde{\psi}_{jk}^{\fdeg,\bar{\nu}}$ since these provide us with closure for the Hodge dual, i.e. it can be represented without the need for a projection.
The only essential operation where $\Psiec$ is not closed is the Laplacian for $1$-forms and the wedge product.
For the latter, Property v.) provides, however, enough control for the fast transform method to be applicable, as will be demonstrated in Sec.~\ref{sec:shallow}.

To make the above construction more concrete from an application point of view, we consider two examples.

\begin{example}[Poisson's equation]
  \label{ex:poisson_equation}
  We consider Poisson's equation $\Delta \omega = \beta$ for $\omega \in \dot{H}^1(\Omega_{\dd}^2,S^2)$ and $\beta \in \dot{H}^{-1}(\Omega_{\dd}^2,S^2)$.
  With the representation of $\omega$ in the dual differential form wavelets $\tilde{\psi}_{s}^{2,\dd}$ we have
  \begin{subequations}
  \begin{align}
    \beta
    = \Delta \sum_{j=0}^{J} \sum_{k=1}^{\vert \Lambda_j \vert} \omega_{jk} \, \tilde{\psi}_{jk}^{2,\dd}
    = \sum_{j=0}^{J} \sum_{k=1}^{\vert \Lambda_j \vert} \omega_{jk} \, \psi_{jk}^{2,\dd} .
  \end{align}
  where we used that $\Delta = \dd \delta = \dd \star \! \dd \star$ and Theorem~\ref{thm:psiec}, iv.).
  Using the primary wavelets $\psi_{j' k'}^{2,\dd}$ as test forms we obtain
  \begin{align}
    \big\langle \! \big\langle \beta \, , \, \psi_{j' k'}^{2,\dd} \big\rangle \! \big\rangle
    &= \Big\langle \! \! \Big\langle \sum_{j=0}^{J} \sum_{k=1}^{\vert \Lambda_j \vert} \omega_{jk} \, \psi_{jk}^{2,\dd} \, , \, \psi_{j' k'}^{2,\dd} \Big\rangle \! \! \Big\rangle
    \\
    \beta_{j'k'} &= \sum_{j=0}^{J} \sum_{k=1}^{\vert \Lambda_j \vert} \omega_s \, D_{jk,j' k'}
  \end{align}
  \end{subequations}
  which, when $j$,$j'$ run only up to some finest level $J < \infty$, is a finite matrix-vector problem amenable to numerical treatment.
  The discrete Laplace operator $D_{jk,j' k'} = \llangle \psi_{jk}^{2,\dd} \, , \, \psi_{j' k'}^{2,\dd} \rrangle$ is thereby by construction invertible, since $\Psiec$ separates the exact forms in $\Omega_{\dd}^2$ from the harmonic forms in $\Omega_h^2$, which form the kernel of $\Delta$.
\end{example}

\begin{example}[Incompressible fluids]
  \label{ex:incompressible_fluid}
  Consider an incompressible fluid with a divergence free velocity vector field $u \in \mathfrak{X}_{\mathrm{div}}(S^2)$.
  Using the musical isomorphisms, $u$ can be associated with a $1$-form field $u^{\flat} \in \Omega_{\delta}(S^2)$ so that the vorticity $\zeta \in \Omega_{\dd}^2(S^2)$ is then given by $\zeta = \dd u^{\flat}$.
  The velocity can, in fact, also be reconstructed from $\zeta$.
  By the Poincar{\'e} lemma, there exists a $\xi \in \Omega_{\dd}^2(S^2)$ such that $u^{\flat} = \delta \xi$ where $\delta$ is the co-differential.
  Taking the exterior derivative of this relation we have $\dd u^{\flat} = \dd \delta \xi = \Delta \xi = \zeta$ so that $u^{\flat} = \delta \Delta^{-1} \zeta$.
  The potential $\xi$ is known as the stream function.

  The above relationships are naturally expressed in $\Psiec$.
  Using the differential form wavelets $\psi_{s}^{1,\delta}$, the velocity $u^{\flat}$ can be written as
  \begin{subequations}
  \begin{align}
    \label{eq:ex:incompressible_fluid:1}
    u^{\flat} = \sum_{j=0}^{J} \sum_{k=1}^{\vert \Lambda_j \vert} u_{jk}^{\flat} \, \psi_{jk}^{1,\delta}
  \end{align}
  where $J = \infty$ for an arbitrary field but it will be finite in numerical calculations.
  The vorticity is hence given by
  \begin{align}
    \zeta = \dd u^{\flat}
    = \sum_{j=0}^{J} \sum_{k=1}^{\vert \Lambda_j \vert} u_{jk}^{\flat} \, \dd \psi_{jk}^{1,\delta}
    = \sum_{j=0}^{J} \sum_{k=1}^{\vert \Lambda_j \vert} u_{jk}^{\flat} \, \psi_{jk}^{2,\dd} .
  \end{align}
  By Theorem~\ref{thm:psiec}, iv.), the stream function has henceforth the representation
  \begin{align}
    \xi = \Delta^{-1} \zeta
    = \sum_{j=0}^{J} \sum_{k=1}^{\vert \Lambda_j \vert} u_{jk}^{\flat} \, \Delta^{-1} \psi_{jk}^{2,\dd}
    = \sum_{j=0}^{J} \sum_{k=1}^{\vert \Lambda_j \vert} u_{jk}^{\flat} \, \tilde{\psi}_{jk}^{2,\dd} .
  \end{align}
  Since $\psi_{s}^{0,\delta} = \star \tilde{\psi}_{s}^{2,\dd}$, by Theorem~\ref{thm:psiec}, ii.), one thus obtains for the reconstruction of the velocity field from the stream function
  \begin{align}
    u^{\flat} = \delta \xi = \star \, \dd \star \xi
    =  \star \, \dd \sum_{j=0}^{J} \sum_{k=1}^{\vert \Lambda_j \vert} u_{jk}^{\flat} \, \psi_{jk}^{0,\delta}
    =  \star \, \sum_{j=0}^{J} \sum_{k=1}^{\vert \Lambda_j \vert} u_{jk}^{\flat} \, \psi_{jk}^{1,\dd}
    =  \sum_{j=0}^{J} \sum_{k=1}^{\vert \Lambda_j \vert} u_{jk}^{\flat} \, \psi_{jk}^{1,\delta}
    \nonumber
  \end{align}
  \end{subequations}
  which is, indeed, the representation of the velocity $1$-form field in Eq.~\ref{eq:ex:incompressible_fluid:1}.
  The above computations can also be traced in Fig.~\ref{fig:psiec_s2}, which can then be read similar to a commutative diagram.
  The present example can be interpreted as $\Psiec$ providing a structure-preserving discretization of incompressible fluids on $S^2$.
\end{example}

The following remarks provide further insight into the properties of $\Psiec$ and relate our construction to existing ones in the literature.

\begin{remark}[Stokes' theorem with wavelet differential forms]
  \label{remark:stokes_thm:wavlets}
We saw in Remark~\ref{remark:stokes_theorem:spectral} that Stokes's theorem can be written using spectral differential form basis functions $y_{lm}^{\fdeg,\nu}$ yielding Eq.~\ref{eq:stokes:spectral:4}.
Following the same steps as there but using the wavelet differential forms $\psi_{jk}^{\fdeg,\nu}$ one obtains
\begin{subequations}
\begin{align}
  \label{eq:stokes:wavelet:1}
   \sum_{jk} \alpha_{jk} \int_{\partial U} \psi_{jk}^{\fdeg,\delta}
  &=  \sum_{jk} \alpha_{jk} \int_{U} \, \psi_{jk}^{\fdeg+1,\dd} .
\end{align}
\end{subequations}
In contrast to spectral differential forms $y_{lm}^{\fdeg,\nu}$, the wavelets $\psi_{jk}^{\fdeg,\delta}$ and $\psi_{jk}^{\fdeg+1,\dd}$ are spatially localized.
Hence, the integrals in Eq.~\ref{eq:stokes:wavelet:1} are non-negligible only for a subset of the wavelets.
On the right hand side, this are the ones whose center $\lambda_{jk}$ is in or sufficiently close to the region $U \subseteq \S2$.
On the left hand side, only the wavelets that are non-negligible over $\partial U$ contribute to the integral, which is a subset of the $\psi_{jk}^{\fdeg+1,\dd}$ on the right hand side.
The integral on the left requires hence, in principle, less computational effort.
Furthermore, because  $\partial U$ has a well localized wavefront set, an approximation is most efficient, i.e. sparsest, when anisotropic, curvelet-like wavelets are used, since then only for those aligned with the boundary yield non-negligible coefficients.

Returning to the case of a spherical cap $\mathcal{C}^{\gamma}$ that was already discussed in Remark~\ref{remark:stokes_theorem:spectral}, the number of wavelets $\smash{\psi_{jk}^{\fdeg+1,\dd}}$ required for a function that is locally over $U$ up to level $J$ is given by $\mathcal{O}( \vert \Lambda_J \vert \, (1 - \cos{\gamma}) )$. In contrast, there are $\mathcal{O}(\sin{\gamma} \sqrt{\vert \Lambda_J \vert})$ non-negligible ones on the boundary.
A further exploration of this, including a rigorous analysis of the convergence rates that would make the above statements precise for arbitrary $\partial U$, is left to future work.
\end{remark}

\begin{remark}[Comparison to WaveTRiSK by Kevlahan and Dubos]
  \label{remark:wavetrisk}
  Kevlahan and Dubos~\cite{Dubos2013,Aechtner2015,Kevlahan2019} proposed a wavelet-based, structure preserving discretization of exterior calculus on $\S2$ that uses second generation, subdivision wavelets.
  As the critical requirement for the multi-resolution structure to be compatible with the discrete exterior calculus~\cite{Thuburn2009,Ringler2010} they identified
  \begin{subequations}
  \begin{align}
    \label{eq:p_dd_commute}
    \mathrm{P}^j \circ \dd^{j+1} = \dd^j \circ \mathrm{P}^j
  \end{align}
  where $\mathrm{P}^j$ is the projection operator from level $j+1$ to level $j$ and $\dd^j$ is the discretized exterior derivative applied on level $j$.
  Eq.~\ref{eq:p_dd_commute} then, for example, ensures mass and vorticity conservation in geophysical fluid dynamics simulations.

  In $\Psiec$, the projection $\mathrm{P}^j$ is realized by dropping the signal representation on level $j+1$.
  Thus for our approach the right hand side of Eq.~\ref{eq:p_dd_commute} is given by
  \begin{align}
     \dd^J \circ \mathrm{P}^J \circ \alpha_{J+1}^{\fdeg,\delta}
     &= \dd^J \circ \mathrm{P}^J \circ \sum_{j=-1}^{J+1} \sum_{k \in \mathcal{K}_j} \alpha_{jk} \, \psi_{jk}^{\fdeg,\delta}
     = \dd^J \circ \sum_{j=-1}^{J} \sum_{k \in \mathcal{K}_j} \alpha_{jk} \, \psi_{jk}^{\fdeg,\delta}
   \end{align}
   where we immediately exploited that only the co-exact part of any form $\alpha$ has a non-trivial exterior derivative.
   Applying Theorem~\ref{thm:psiec} we obtain
   \begin{align}
     \dd^J \circ \mathrm{P}^J \circ \alpha_{J+1}^{\fdeg,\delta}
     = \sum_{j=-1}^{J} \sum_{k \in \mathcal{K}_j} \alpha_{jk} \, \psi_{jk}^{\fdeg+1,\dd} .
  \end{align}
  \end{subequations}
  An analogous calculation shows that the left hand side of Eq.~\ref{eq:p_dd_commute} equals $\mathrm{P}^J  \circ \dd^J \circ \alpha_{J+1}^{\fdeg,\delta}$ so that Eq.~\ref{eq:p_dd_commute} holds.
  Our $\Psiec$ hence also satisfies the requirement put forth by Dubos and Kevlahan.
\end{remark}

%

\begin{remark}
The functional analytic setting of $\Psiec$ are the homogeneous Sobolev spaces $\dot{H}^{-r+1}(\Omega_{\nu}^{\fdeg},S^2)$.
When harmonic forms are treated separately, these are non-degenerate by construction and provide, in our opinion, a natural setting for exterior calculus.
The sequence $H^1 \hookrightarrow L_2 \hookrightarrow H^{-1}$, as it occurs in our representation of $0$-forms and $2$-forms, is a classical example of a Gelfand triple (or rigged Hilbert space), first introduced as a functional analytic setting for the generalized eigenfunctions of the derivative operator.
With this perspective, the primary wavelets $\smash{\psi_{jk}^{\fdeg,\nu}(\omega)}$ and their duals $\smash{\tilde{\psi}_{jk}^{\fdeg,\nu}(\omega)}$ form Gelfand frames, see e.g.~\cite{Feichtinger2009,Trapani2019}, a concept closely related to the Stevenson frames~\cite{Stevenson2003,Balazs2019} that we use.
Stevenson's original work~\cite{Stevenson2003} was, in fact, similar to ours in that he was also interested in the Galerkin-type discretization of operator equations.
 We believe that the Gelfand frame perspective can also be beneficial for $\Psiec$ but we leave a detailed investigation to future work.
\end{remark}

\begin{remark}
In Remark~\ref{remark:stokes_thm:wavlets} we showed that anisotropic differential form wave\-lets could be useful for the numerical realization of Stokes' theorem in $\Psiec$.
Similar to the scalar case, cf. Sec.~\ref{sec:wavelets:discrete}, such form wavelets can be obtained with a straightforward extension of the $\smash{\psi_{jk}^{\fdeg,\nu}(\omega)}$ presented above by introducing mother window coefficients $\smash{\kappa_{lm}^{j,t} = \kappa_{l}^j \, \beta_{m}^{j,t}}$ with a dependence on the azimuthal spherical harmonics parameter $m$.
We hope to address anisotropic differential form wavelets in future work.
\end{remark}

%

\section{$\Psiec$-based Simulation of the Rotating Shallow Water Equations}
\label{sec:shallow}

In the following we will use the local spectral exterior calculus $\Psiec$ that we introduced in the last section to develop a discretization of the rotating shallow water equations.
Numerical results for standard test cases as well as simple forecast experiments will be presented.

\subsection{Exterior Calculus Formulation of the Shallow Water Equations}
\label{sec:shallow:forms}

The shallow water equations in vorticity-divergence form are given by (e.g.~\cite{Williamson1992})
\begin{subequations}
\begin{align}
  \label{eq:shallow:vort_div:1}
  \dot{\zeta} &= -\nabla \cdot (\zeta + f) \, \vec{u}
  \\
  \label{eq:shallow:vort_div:2}
  \dot{\mu} &= \nabla \times (\zeta + f) \, \vec{u} - \Delta \frac{\vert \vec{u} \vert^2}{2} - \Delta g (h + h_e)
  \\
  \label{eq:shallow:vort_div:3}
  \dot{h} &= -\nabla \cdot (h \, \vec{u})
\end{align}
where $h$ is the depth of the fluid and $h_e$ the orography of the earth, $g$ denotes the gravitational constant, and $f$ is the Coriolis parameter that accounts for the rotating frame.
The vorticity $\zeta$ and divergence $\mu$ of the fluid velocity $\vec{u} \in \mathfrak{X}(S^2)$ are
\begin{align}
  \label{eq:shallow:stream_fct_velocity_potential}
  \zeta = \nabla \times \vec{u}
  \quad \quad \quad \quad
  \mu = \nabla \cdot \vec{u} .
\end{align}
The potentials associated with $\zeta$ and $\mu$ are the stream function $\xi$ and the velocity potential $\chi$, respectively.
These are given by
\begin{align}
  \Delta \xi = \zeta
  \quad \quad \quad \quad
  \Delta \chi = \mu .
\end{align}
\end{subequations}
They enable one to write the  velocity vector field as $\vec{u} = \nabla_{\! \perp} \xi + \nabla \chi $ where $\nabla_{\! \perp}$ is the skew-gradient.
To simplify notation, we will write in the following $\eta = \zeta + f$.

By associating the velocity vector field $\vec{u} \in \mathfrak{X}(\S2)$ with the velocity $1$-form $u^{\flat} \in \Omega^1(\S2)$ the shallow water equation can be written using exterior calculus.
It can be shown that Eqs.~\ref{eq:shallow:vort_div:1}-~\ref{eq:shallow:vort_div:3} are then equivalent to
\begin{subequations}
  \label{eq:shallow:ec}
\begin{align}
  \label{eq:shallow:ec:1}
  \dot{\zeta} &= -\dd ( \star \zeta \wedge \star u^{\flat} )
  \\[5pt]
  \label{eq:shallow:ec:2}
  \dot{\mu} &= -\dd (\star \zeta \wedge u^{\flat}) - \Delta \llangle u^{\flat} , u^{\flat} \rrangle - g \, \Delta (h + h_{e})
  \\[5pt]
  \label{eq:shallow:ec:3}
  \dot{h} &= -\dd ( \star h \wedge \star u^{\flat} ) .
\end{align}
\end{subequations}
where the fields are given by
\begin{subequations}
\begin{align}
  u^{\flat} &= u_{\dd}^{\flat} + u_{\delta}^{\flat} \ \ \ \in \Omega_{\dd}^1(S^2) \otimes \Omega_{\delta}^1(S^2)
  \\[3pt]
  \zeta &= \dd u^{\flat} \ \quad \quad \ \in \Omega_{\dd}^2(\S2)
  \\[3pt]
  \mu &= \dd \star u^{\flat} \quad \ \ \in \Omega_{\dd}^2(\S2)
  \\[3pt]
  h & \quad \quad \quad \quad \quad \ \ \in \Omega_{\dd}^2(\S2) .
\end{align}
\end{subequations}

\subsection{Formulation of Rotating Shallow Water Equations in $\Psiec$}
\label{sec:shallow:discretization}

To discretize the rotating shallow water equations in Eq.~\ref{eq:shallow:ec} we write the prognostic fields $\zeta$, $\mu$, and $h$ using the differential form wavelets up to some finest level $J$.
At time $t_n = n \, t_{\Delta}$, with $t_{\Delta}$ being the (fixed) time step, we thus have
\begin{subequations}
\begin{align}
  \zeta(\omega,t_n) = \zeta^n(\omega) &= \sum_{j=0}^{J} \sum_{k=0}^{\vert \Lambda_j \vert} \zeta_{jk}^n \, \psi_{jk}^{2,\dd}(\omega)
  \\
  \mu(\omega,t_n) = \mu^n(\omega) &= \sum_{j=0}^{J} \sum_{k=0}^{\vert \Lambda_j \vert} \mu_{jk}^n \, \psi_{jk}^{2,\dd}(\omega)
  \\
  \mu(\omega,t_n) = h^n(\omega) &= \sum_{j=0}^{J} \sum_{k=0}^{\vert \Lambda_j \vert} h_{jk}^n \, \psi_{jk}^{2,\dd}(\omega) .
\end{align}
\end{subequations}
We will denote the naive vectors of basis function coefficients as $\bar{\zeta}^n$, $\bar{\mu}^n$, and $\bar{h}^n$.

With the above representations, the Hodge dual and the exterior derivative can then be evaluated using the properties of $\Psiec$ in Theorem~\ref{thm:psiec}.
For the wedge product, which currently has no simple expression for the $\psi_{jk}^{\fdeg,\nu}$, we use the transform method~\cite{Orszag1970,Eliasen1970}, i.e. we evaluate the product in the spatial domain and reproject the result onto the wavelets.
By using the wavelet nodes $\lambda_{jk}$ as evaluation points, the reprojection can be implemented efficiently using the exact quadrature discussed in Remark~\ref{remark:wavelets_quadrature}.
This can be interpreted as analysis operator for an $L_j$-bandlimited $\fdeg$-form, which we denote as
\begin{align}
  \label{eq:shallow:anslysis_quadrature}
  \mathcal{A}_{J}^{\fdeg,\nu}(\alpha) = \Bigg\{ \sum_{k'=1}^{\vert \Lambda_J \vert} \alpha(\lambda_{Jk'}) \cdot \tilde{\psi}_{jk}^{\fdeg,\nu}(\lambda_{Jk'}) \Bigg\}_{j = -1, k=1}^{J,\vert \Lambda_j \vert} .
\end{align}
We will ensure that $\mathcal{A}_{J}^{\fdeg,\nu}(\alpha)$ is only required for scalar $0$- and $2$-forms, which simplifies the implementation.
Analogously, the reconstruction operator that determines the spatial representation of a form at the locations $\lambda_{Jk}$ from the basis function coefficients  $\bar{\alpha} = \{ \alpha_{jk}^n \}_{j,k}$ will be denoted as
\begin{align}
  \label{eq:shallow:reconstruction}
  \mathcal{R}_{J}^{\fdeg,\nu}(\bar{\alpha}) = \Bigg\{ \sum_{j=0}^{J} \sum_{k'=0}^{\vert \Lambda_j \vert} \alpha_{jk'}^n \, \psi_{jk'}^{\fdeg,\nu}(\lambda_{J,k}) \Bigg\}_{k=1}^{\vert \Lambda_J \vert} .
\end{align}

%
Using the analysis and reconstruction operators in Eq.~\ref{eq:shallow:anslysis_quadrature} and Eq.~\ref{eq:shallow:reconstruction}, we will detail the computational steps that are required to determine the time evolution of vorticity $\zeta$ in Eq.~\ref{eq:shallow:ec:1}; Eq.~\ref{eq:shallow:ec:2} and Eq.~\ref{eq:shallow:ec:3} follow by analogous considerations.
To avoid having to compute the analysis operator for $1$-forms, we use the Leibniz rule to write Eq.~\ref{eq:shallow:ec:1} as
\begin{align}
  \label{eq:shallow:impl:dot_zeta:1}
  \dot{\zeta} = - \dd \star \zeta \wedge \star u^{\flat} - \star \zeta \wedge \mu .
\end{align}
For the velocity $u^{\flat}(t_n) = u_{\dd}^{\flat}(t_n) + u_{\delta}^{\flat}(t_n)$ at time $t_n = n t_{\Delta}$, we require the stream function $\xi^n = \Delta^{-1} \zeta^n$ and velocity potential $\chi^n = \Delta^{-1} \mu^n$.
Using Theorem~\ref{thm:psiec} we have for the latter
\begin{align*}
  u_{\dd}^{\flat}(t_n) &= \delta \xi^n = \star \dd \star \Delta^{-1} \zeta^n
  = \star \, \dd \star \Delta^{-1} \sum_{jk} \zeta_{jk}^n \, \psi_s^{2,\dd}
  = \star \, \dd \star \sum_{jk} \zeta_{jk}^n \, \tilde{\psi}_s^{2,\dd}
  = \sum_{jk} \zeta_{jk}^n \, \psi_s^{1,\delta}
  \\[3pt]
  u_{\dd}^{\flat}(t_n) &= \dd \! \star \! \chi^n = \dd \star \Delta^{-1} \mu^n
  = \dd \star \Delta^{-1} \sum_{jk} \mu_{jk}^n \, \psi_s^{2,\dd}
  = \dd \star \sum_{jk} \mu_{jk}^n \, \tilde{\psi}_s^{2,\dd}
  = \sum_{jk} \mu_{jk}^n \, \psi_s^{1,\dd}
\end{align*}
i.e. we can obtain $u_{\dd}^{\flat}(t_n)$ and $u_{\delta}^{\flat}(t_n)$ by reconstruction of the $1$-form with the basis function coefficients $\zeta_{jk}^n$ and $\mu_{jk}^n$ of vorticity and divergence, respectively.
For $\dd \star \zeta$ we require $\star \, \zeta$ in the primary basis, which we currently realize using an explicit projection, denoted as $\bar{\star}_{20}$ and given by a matrix.
\begin{subequations}
\label{eq:shallow:impl}
We implement Eq.~\ref{eq:shallow:impl:dot_zeta:1} thus as
\begin{align}
  \label{eq:shallow:impl:dot_zeta}
  \dot{\bar{\zeta}}^n =
  - \mathcal{A}_{J+1}^{2,\dd}\Big(\mathcal{R}_J^{1,\dd}( \bar{\star}_{20} \, \bar{\zeta}^n ) \wedge \big( \mathcal{R}_J^{1,\delta}( \bar{\mu}^n ) + \mathcal{R}_J^{1,\delta}( \bar{\zeta}^n ) \big)
  - \mathcal{R}_J^{0,\delta}\big(\bar{\zeta}^n \big) \wedge \mathcal{R}_J^{2,\dd}(\bar{\mu}^n)\Big)
\end{align}
where the wedge product is evaluated pointwise using its definition in the continuous theory.
As is standard for the transform method, we truncate from level $J+1$ to level $J$ after each time step to remain in the same space over time.

\begin{figure}[t]
  \includegraphics[width=\textwidth]{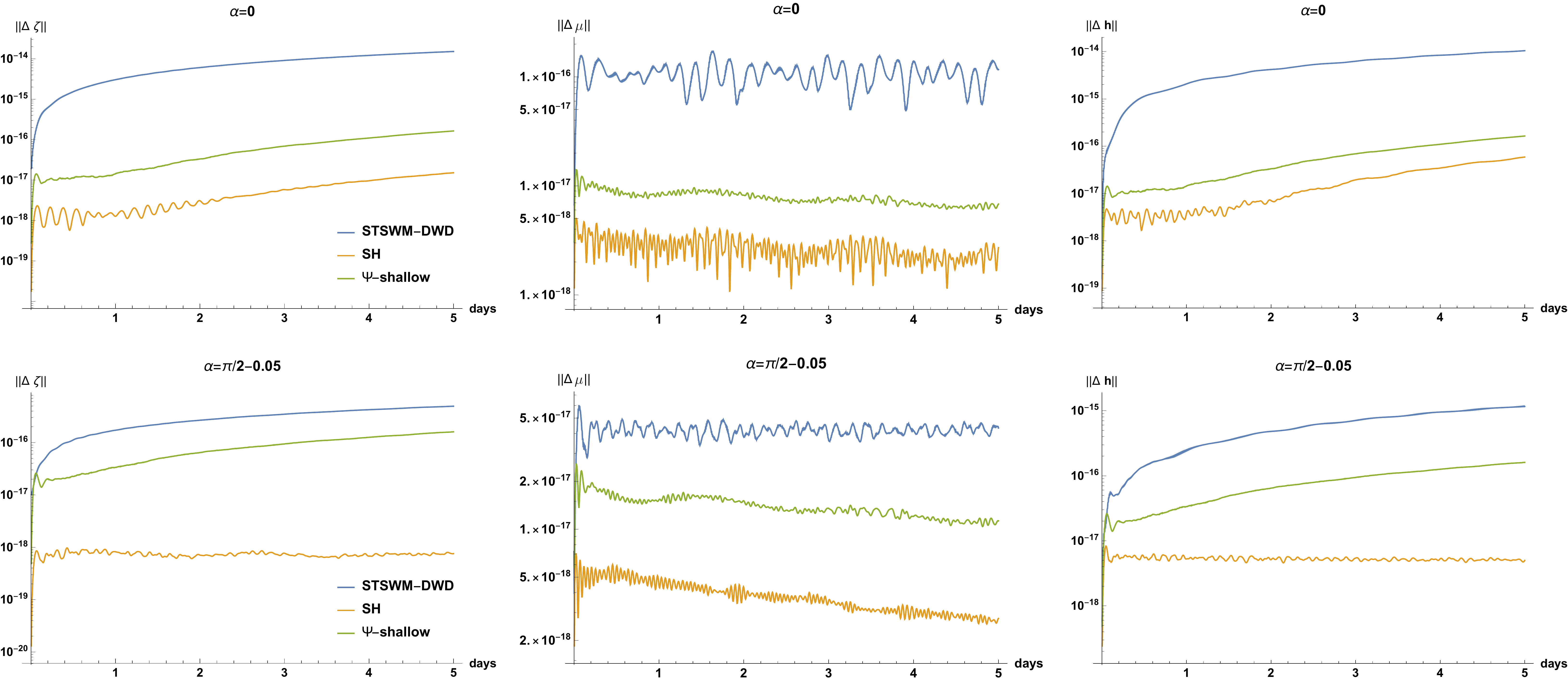}
  \caption{Numerical results the $2^{\textrm{nd}}$ test by Williamson et al.~\cite{Williamson1992} which is a steady state geostrophic flow.
  From left to right we show vorticity $\zeta$, divergence $\mu$, and depth $h$ for two representative values of the parameter $\alpha$ that is the angle between the rotation and the up axis. For all variables the norm of the deviation to the initial value is plotted.}
  \label{fig:williamson2}
\end{figure}

Analogous to vorticity, we obtain for the time evolution of divergence $\mu$ and the fluid depth $h$,
\begin{align}
  \label{eq:shallow:impl:dot_mu}
  \dot{\bar{\mu}}^n &=
    \mathcal{A}_{J+1}^{2,\dd}\Big(\mathcal{R}_J^{1,\dd}( \bar{\star}_{20} \, \bar{\mu}^n ) \wedge \big( \mathcal{R}_J^{1,\delta}( \bar{\mu}^n ) + \mathcal{R}_J^{1,\delta}( \bar{\zeta}^n ) \big)
    + \mathcal{R}_J^{0,\delta}\big(\bar{\mu}^n\big) \wedge \mathcal{R}_J^{2,\dd}(\bar{\zeta}^n)\Big)
    \\[2pt]
    & \quad \quad \quad \quad - \bar{\Delta} \mathcal{A}_{J+1}^{2,\dd}\big( \big( \mathcal{R}_J^{1,\delta}( \bar{\mu}^n ) + \mathcal{R}_J^{1,\delta}( \bar{\zeta}^n ) \big)^2 \big) - g \bar{\Delta} \big( \bar{h}^n + \bar{h}_e \big)
    \nonumber
  \\[5pt]
  \label{eq:shallow:impl:dot_h}
  \dot{\bar{h}}^n &=
  - \mathcal{A}_{J+1}^{2,\dd}\Big(\mathcal{R}_J^{1,\dd}( \star_{20} \, \bar{h}^n ) \wedge \big( \mathcal{R}_J^{1,\delta}( \bar{\mu}^n ) + \mathcal{R}_J^{1,\delta}( \bar{\zeta}^n ) \big)
  - \mathcal{R}_J^{0,\delta}\big(\bar{h}^n \big) \wedge \mathcal{R}_J^{2,\dd}(\bar{\mu}^n )\Big)
\end{align}
\end{subequations}
where $\bar{\Delta}$ is the Galerkin projection of the Laplace-Beltrami operator.
Eqs.~\ref{eq:shallow:impl:dot_zeta}, \ref{eq:shallow:impl:dot_mu}, \ref{eq:shallow:impl:dot_h} provide together our discrete shallow water equations.

For time stepping we use a simple leapfrog scheme with Robertson smoothing, which provided sufficiently accurate solutions in our numerical experiments.

\subsection{Experiments}
\label{sec:shallow:experiments}

In the following we report on experimental results for our $\Psiec$-based discretization of the shallow water equations for the standard tests proposed by Williamson et al.~\cite{Williamson1992} as well as short-time forecast experiments.

\subsubsection{Implementation}

We developed a C++ implementation of Eqs.~\ref{eq:shallow:impl}, which we will refer to as $\Psi$-shallow.
The reported results are for $J=5$.
As reference we use our own spectral implementation, named SH-shallow, based on~\cite{Bourke1972} with libsharp~\cite{Reinecke2013} for the fast spherical harmonics transform.
To have a fair comparison, we chose the bandlimit of the spectral model to match the largest representable frequency of $\Psi$-shallow, i.e. $L_{\mathrm{max}} = 2^{j}$.
We also compared to the implementation by Hack and Jakob~\cite{Hack1992}, in the adaptation developed for the verification of the ICON model~\cite{Korn2018}.
We denote it as DWD-shallow.
All experiments were performed in double precision.

\subsubsection{Standard test cases}

We considered test cases 2, 6, 7 and from~\cite{Williamson1992}, which have been widely used in the literature to assess the correctness of simulations of the shallow water equations.

\begin{figure}[t]
  \includegraphics[width=\textwidth]{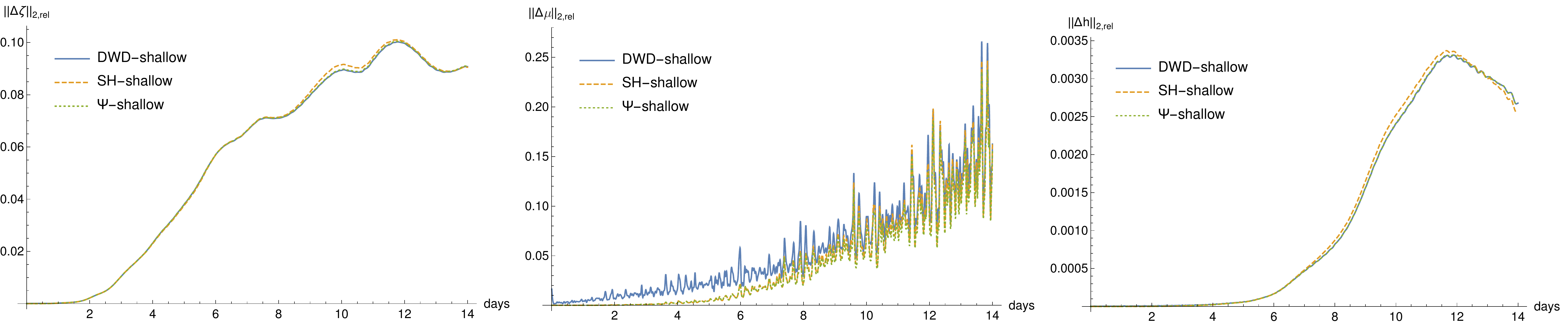}
  \includegraphics[width=\textwidth]{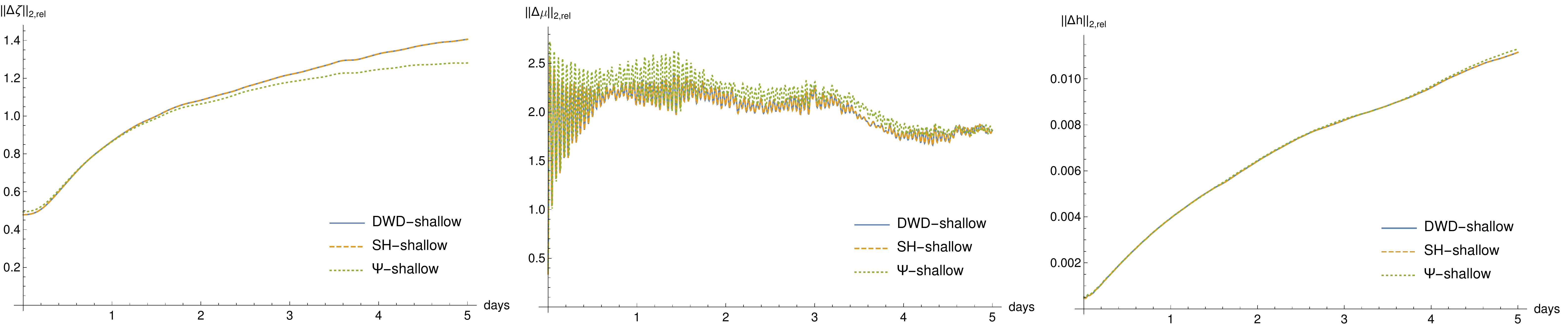}
  \caption{Experimental results the $6^{\textrm{th}}$ (top) and $7^{\mathrm{th}}$ (bottom) standard test by Williamson et al.~\cite{Williamson1992}.
  From left to right we show the deviation of vorticity $\zeta$, divergence $\mu$, and depth $h$ from a reference solution obtained with DWD-shallow with $L_{\mathrm{max}} = 128$.}
  \label{fig:williamson6}
\end{figure}

\paragraph{Test case 2}
This test is a steady state solution with vanishing divergence.
It has a parameter $\alpha$ that is the angle between the rotation axis and the up axis.
Varying $\alpha$ tests the isotropy of the model, e.g. if flows over the pole can be represented as accurately as those along the equator.
Fig.~\ref{fig:williamson2} shows the norm of the deviations of vorticity, divergence and geopotential from the initial value for a 10 day simulation.
All three implementations preserve the initial values to high  accuracy, i.e. they provide good simulations of the expected steady state.
The slightly larger error for $\Psi$-shallow compared to SH-shallow results from the fact that the tight frame property is enforced numerically and a residual slightly larger than machine precision remains at the end of the optimization.
In contrast, libsharp, used in SH-shallow, performs highly accurate spherical harmonics transforms with an error on the order of machine precision.

\paragraph{Test case 6}
Fig.~\ref{fig:williamson6} shows results for test case 6, which is a Rossby-Hurrwitz wave.
No analytic solution is available in this case so we used DWD-shallow with $L_{\mathrm{max}} = 128$ as such.
The results demonstrate that the $\Psiec$-discretization provides accuracy comparable with those obtained by our spectral implementation for all three prognostic variables $\zeta$, $\mu$, and $h$.

\paragraph{Test case 7}
The test considers physical initial conditions for January 1979.
We again use DWD-shallow with $L_{\mathrm{max}} = 128$ as reference.
Although a slight deviation of the solution of $\Psi$-shallow can be seen over time, it remains sufficiently close to provide accurate predictions.

\paragraph{Energy and enstrophy}
In Fig.~\ref{fig:williamson6_en} we show the change in energy $E = \llangle u^{\flat} , u^{\flat} \rrangle$ and enstrophy $\mathcal{E} = \llangle \zeta , \zeta \rrangle$ for test case 6, i.e. the Rossby-Hurrwitz wave, for $\Psi$-shallow.
The result demonstrate excellent conservation properties for our implementation based on $\Psiec$, as one would expect with its respect for exterior calculus.
A theoretical analysis of the conservation properties of $\Psi$-shallow will be presented in a forthcoming publication.


\subsubsection{Forecast experiments}

To obtain some insight on the performance of our discretization under more realistic conditions we performed forecast experiments using reanalysis data (ERA-Interim~\cite{Dee2011}).
We used each time slice available in the data set as initial condition and ran the simulation for $6$ hours.
We then compared the forecast to the data for the time point in the reanalysis.
As naive base line we used the persistent forecast where the data is kept constant over the $6$ hour period.

In Fig.~\ref{fig:reanalysis} we report the difference between forecast and reanalysis data for the year 1979; analogous ones hold for other years.
The plots show that our simulations provide substantial improvements over a naive forecast especially for vorticity.
For divergence there is a smaller improvement and  $\Psi$-shallow is less accurate than SH-shallow.



\section{Future Work}
\label{sec:future_work}

The presented results provide many avenues for future work.
Our long term objective is the development of a data-assisted dynamical core for the prediction of climate statistics.
For this, we want to extend the discretization of the shallow water equations developed in Sec.~\ref{sec:shallow:discretization} to one for the hydrostatic primitive equations and couple it to neural networks that ensures the correct prediction of local statistics.
Preliminary experiments indicate that the differential form wavelets provide a useful representation of the data for the neural networks, which also ensures that these respect the basic physical principles encoded in the Hodge-Helmholtz decomposition.

\begin{figure}[t]
  \includegraphics[width=\textwidth]{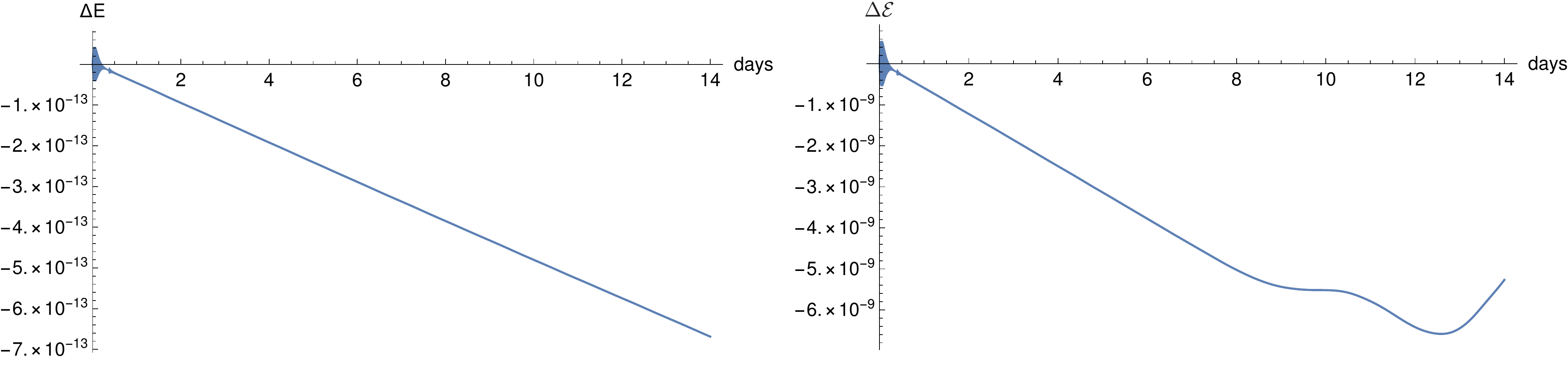}
  \caption{Change in energy $E$ and enstrophy $\mathcal{E}$ for the the $6^{\textrm{th}}$ test by Williamson et al.~\cite{Williamson1992}.}
  \label{fig:williamson6_en}
\end{figure}

Our local spectral exterior calculus for the sphere $\Psiec$ can be developed further in different directions.
Currently, we only consider differential forms, analogous to other existing discretizations of exterior calculus, e.g.~\cite{Desbrun2006,Arnold2018}.
However, since our wavelets are forms in the sense of the continuous theory they naturally pair with vector fields.
This suggests to extend our construction by frame representations for vector fields.
Then, for example, the Lie derivative could be evaluated directly.
This would considerably simplify many equations, for instance Eq.~\ref{eq:shallow:ec} could be written and implemented much more directly.
In our $\Psiec$-based implementation of the shallow water equations, some terms, such as $\dd \star \zeta$, require an explicit projection, which can become a computational bottleneck.
It should hence be investigated how such projections can be avoided or if efficient mass lumping-like implementations are possible, similar to what has been accomplished in Discrete Exterior Calculus~\cite{Desbrun2006}.

In future work, we would also like to investigate the approximation properties of our differential form wavelets.
In the scalar case, similar questions have been investigated for compactly supported multi-scale RBFs
~\cite{LeGia2010,LeGia2012} and the Sobolev space setting has also been considered by Freeden and co-workers~\cite[Ch. 5]{Freeden1998}.
To the best of our knowledge, the case of differential forms has not been investigated.
Of interest is in this context also the utility of anisotropic differential form wavelets, which, as we already discussed in Sec.~\ref{sec:forms}, can be obtained with a straightforward extension of the construction in the present work.
We conjecture that, analogous to the scalar case~\cite{Candes2004,Candes2005a,Candes2005b}, these are required to attain (quasi-)optimal approximation rates for directional fields, e.g. flows along boundaries or global circulation patterns.
The results on the approximation properties are a prerequisite for the development of adaptive numerical schemes that exploit varying regularity, e.g.~\cite{Stevenson2003}.
Such schemes are another long term objective we would like to pursue.

\begin{figure}[t]
  \includegraphics[width=\textwidth]{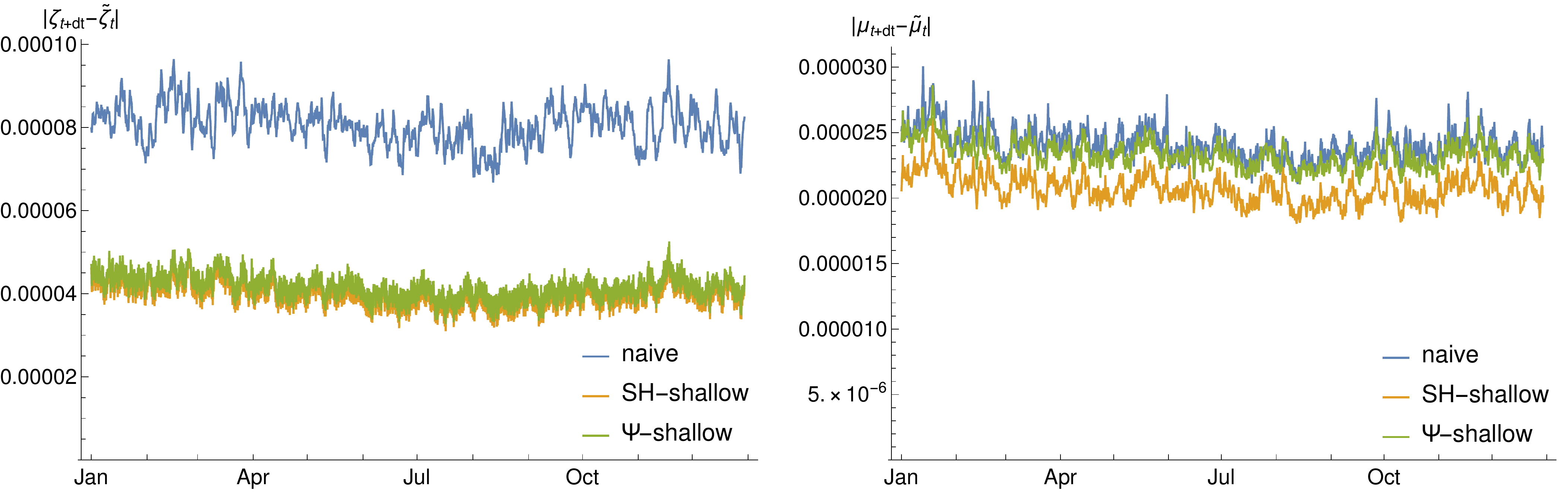}
  \caption{Results for $6$ hours forecast experiments with reanalysis data (ERA-Interim~\cite{Dee2011}) as initial conditions. In the naive forecast that we used as reference the data is kept constant over the $6$ hours period. Reported is the difference between forecast and reanalysis data for all time slices in 1979.}
  \label{fig:reanalysis}
\end{figure}

The construction of structure preserving numerical integrators based on $\Psiec$ is another interesting direction for future work.
The numerical results presented in Sec.~\ref{sec:shallow} indicate that $\Psiec$ yields energy and enstrophy conservation naturally when the vorticity-divergence formulation of the shallow water equations is used.
A theoretical analysis will be presented in a forthcoming publication.

\section{Conclusions}
\label{sec:conclusions}

In this paper we introduced $\Psiec$, a wavelet-based discretization of exterior calculus for the two-sphere $\S2$.
It is based on differential form wavelets $\smash{\psi_{jk}^{\fdeg,\nu}(\omega)}$ that provide (Stevenson) frames for homogeneous Sobolev spaces $\smash{\dot{H}^{-r+1}(\Omega_{\nu}^{\fdeg},S^2)}$.
These were derived from needlet-like, scalar wavelets that we obtained using scalable reproducing kernel frames.
In contrast to other discretizations of exterior calculus, $\Psiec$ systematically distinguishes between exact, co-exact and harmonic forms, which provides precise control domain, image and kernel of the exterior derivative.

Using $\Psiec$, we developed a discretization of the rotating shallow water equations.
Our numerical experiments for standard test cases and forecast experiments demonstrate that it provides accuracy comparable to classical spectral methods and preserves energy and enstrophy.
In future work, we want to extend this discretization to the hydrostatic primitive equations.

\section*{Acknowledgments}
CL would like to thank Mathieu Desbrun for helpful discussion on the relationship between Discrete Exterior Calculus and the present work.
Funding by AIR Worldwide is gratefully acknowledged.

\appendix
\section{Scalable Reproducing Kernel Frames for $H_{\leq L_j}(\S2)$}
\label{sec:scalable_rk_frames:optimization}


In the following, we will detail the numerical construction of the scalable reproducing kernel frames $(\Lambda_j,w_j)$ spanning the spaces $\mathcal{H}_{\leq L_j}(\S2)$ that are used for the discretization of the scalar wavelets in Sec.~\ref{sec:wavelets} and differential form wavelets in Sec.~\ref{sec:forms}.
Since the construction is not specific to any $L_j$ we will omit the level index $j$ in the following.

Since we currently do not have a closed form method for the construction of the spherical scalable reproducing kernel frames, the locations $\lambda_k \in \S2$ and weights $w_k \in \R^+$ forming $(\Lambda, w)$ spanning $\mathcal{H}_{\leq L}(\S2)$ are obtained using nonlinear numerical optimization. 
To characterize the quality of $(\Lambda,w)$, we directly use the deviation from the scalable frame property, i.e.
\begin{align}
  \label{eq:scalable_rk:energy:basic}
  E(\Lambda,w) = \big\Vert \tilde{S} \big\Vert_{\mathrm{F}} = \big\Vert K^H W K - \mathrm{Id} \big\Vert_{\mathrm{F}} ,
\end{align}
where $K \in \R^{\vert \Lambda \vert \times N}$ with $N = (L+1)^2$ is the kernel matrix whose entries are $K_{k,l^2+l+m} = y_{lm}(\lambda_k)$ and $W \in \mathbb{R}^{\vert \Lambda \vert \times \vert \Lambda \vert}$ is the diagonal matrix formed by the weights $w_k$.
We use the Frobenius norm $\Vert \cdot \Vert_{F}$ for $E(\Lambda,w)$ in Eq.~\ref{eq:scalable_rk:energy:basic} since it facilitates the computation of the gradient $\smash{\nabla E = ( \nabla_{w_k} E(\Lambda,w) , \nabla_{\theta_k} E(\Lambda,w) , \nabla_{\phi_k} E(\Lambda,w))_{k=1}^{\vert \Lambda \vert}}$.
Its components, in a form suitable for numerical computations, are
\begin{subequations}
  \label{eq:scalable_rk:gradient}
\begin{align}
  \nabla_{w_k} E(\Lambda,w) &= \frac{1}{E} K_k^H \, (\tilde{S}^* \, K)_k
  \\[4pt]
  \nabla_{\theta_k} E(\Lambda,w) &= \frac{2}{E} (K_{\theta})_k^H \, (\tilde{S}^* \, K)_k
  \\[4pt]
  \nabla_{\phi_k} E(\Lambda,w) &= \frac{2}{E} (K_{\phi})_k^H \, (\tilde{S}^* \, K)_k .
\end{align}  
\end{subequations}
The matrices $K_{\theta}$ and $K_{\phi}$ are formed by the derivatives of the spherical harmonics, i.e. $(K_{\theta})_{k,l^2+l+m} =  \partial y_{lm}(\lambda_k) / \partial \theta$ and $(K_{\phi})_{k,l^2+l+m} = \partial y_{lm}(\lambda_k) / \partial \phi$, and $K_k^H$ refers to the $k^{\mathrm{th}}$ row of the matrix $K^H$.

\begin{table}[t]
\centering 
  \begin{tabular}{c|c|c|c|c|c}
  $j$ & $\vert \Lambda_j \vert $ & $\mathrm{min}(E)$ & $\mathrm{min}(E) / \vert \Lambda_j \vert $ & time (sec) & iterations (phase 1)
  \\[2pt]
  \hline
  & & & & 
  \\[-10pt]
  2 & 32 & 9.19787e-16 & 2.87433e-17 & 2 & 285 (149)
  \\[2pt]
  3 & 128 & 3.71656e-15 & 2.90356e-17 & 10 & 1439 (1062)
  \\[2pt]
  4 & 512 & 2.94946e-14 & 5.76066e-17 & 61 & 1739 (875)
  \\[2pt]
  5 & 2048 & 7.24020e-13 & 3.53525e-16 & 5533 & 6225 (2656)
  \\[2pt]
  6 & 8192 & 1.22619e-11 & 1.49681e-15 & 807 $\times 10^3$ & 22482 (6292) 
 \end{tabular}
 \caption{Results of the nonlinear optimization to obtain scalable reproducing kernel frames $( \Lambda_j , w_j )$ for different levels $j$. Timings are for a shared memory implementation with 32 threads. The value in brackets for the iteration number refers to those required for the first optimization phase where only the locations are optimized.}
 \label{tab:opt}
\end{table}

With the above energy and gradient, numerical optimization of $(\Lambda, w)$ can be realized.
We perform the it in two phases to facilitate well distributedness of the locations $\lambda_k$. 
In the first phase, only the $\lambda_k$ are optimized and the weights $w_k$ are fixed at the ideal value $w_k = 4\pi / \vert \Lambda \vert$.
This yields well distributed points since the weights $w_k$ are a means to compensate for a lack of uniformness. 
In the second phase, both the locations and the weights are variable. 
We observed that the locations change only by small amounts in this phase.
The optimization is implemented in custom C++ code with the minimization performed using the conjugate gradient method available in the ALG library~\cite{Bochkanov2020}.
To reduce computations times, the code has been parallelized for shared memory systems with the construction of $K$ and also the products required for the gradient evaluated by multiple threads simultaneously. 

Table.~\ref{tab:opt} shows final energies and the optimization times (for 32 threads on Intel(R) Xeon(R) Gold 5122 CPU @ 3.60GHz with 16 Core CPU). 
Experiments with extended precision indicate that the optimization yields true minimizers. 
Since our scalable reproducing kernel frames use nested locations, i.e. $\Lambda_j \subset \Lambda_{j+1}$, the optimizations for different $j$ have to be performed in order. 
The additional points for the next level are thereby always obtained from a quasi random sequence on $[0,1]^2$ mapped to the sphere with an area preserving mapping.

The results in Table~\ref{tab:opt} indicate that with a shared memory implementation levels $j$ with $j > 6$ will require an excessive amount of computation time.
We hence implemented an MPI-based task parallel version of the optimization that can run on large cluster computers. 
Its details and the results we obtained with it will be presented in a forthcoming publication.

\bibliographystyle{siamplain}
\bibliography{climate,saam,psiec}

\end{document}